\documentclass[reqno,oneside,12pt]{amsart}

\usepackage{fancyhdr}
\usepackage{xspace}
\usepackage{geometry}
\usepackage{tikz}
\usepackage{tikz-cd}
\usepackage{aligned-overset}
\tikzset{node distance=1.5cm, auto}
\usepackage{color}
\definecolor{darkgreen}{rgb}{0,0.45,0}
\usepackage[pagebackref,colorlinks,citecolor=darkgreen,linkcolor=darkgreen, urlcolor=darkgreen, hypertexnames=false]{hyperref}
\usepackage[capitalize]{cleveref}
\usepackage{stmaryrd}
\usepackage{enumerate}
\usepackage[shortlabels]{enumitem}
\usepackage{makecell}
\usepackage[justification = centering]{caption}
\usepackage{cancel}
\usepackage{comment}

\addtolength{\hoffset}{-1cm}
\addtolength{\textwidth}{2cm}
\addtolength{\voffset}{-0.5cm}
\addtolength{\textheight}{2.4cm}
\setlength{\marginparwidth}{1.5cm}

\NeedsTeXFormat{LaTeX2e}

\usepackage{graphicx}
\usepackage[all]{xy}
\usepackage{amsmath,amsfonts,amssymb}
\usepackage{mathtools,thmtools}

\usepackage{tikz}
\usepackage{tikz-cd}


\makeatletter
\newcommand{\tleft}{\mathrel\triangleleft}
\newcommand{\tright}{\mathrel\triangleright}
\DeclareRobustCommand{\btleft}{\mathrel{\mathpalette\btlr@\blacktriangleleft}}
\DeclareRobustCommand{\btright}{\mathrel{\mathpalette\btlr@\blacktriangleright}}

\newcommand{\btlr@}[2]{%
	\begingroup
	\sbox\z@{$\m@th#1\triangleright$}%
	\sbox\tw@{\resizebox{1.1\wd\z@}{1.1\ht\z@}{\raisebox{\depth}{$\m@th#1\mkern-1mu#2$}}}%
	\ht\tw@=\ht\z@ \dp\tw@=\dp\z@ \wd\tw@=\wd\z@
	\copy\tw@
	\endgroup
}

\usepackage{trimclip}
\newlength{\trianglewidth}
\settowidth{\trianglewidth}{\(\triangleleft\)}


\newcommand{\Hom}{{\sf Hom}}

\newcommand{\End}{{\sf End}}

\newcommand{\Mod}{{\sf Mod}}

\newcommand{\Id}{{\sf Id}}

\newcommand{\PMod}{{\sf PMod}}
\newcommand{\HPMod}{{{_H}\sf PMod}}
\newcommand{\HMod}{{{_H}\sf Mod}}

\newcommand{\Vect}{{\sf Vect}}

\renewcommand{\Bbbk}{k}

\newcommand{\HYDH}{{_H} \mathcal{YD}^H}

\def\NN{{\mathbb N}}

\newcommand{\Bb}{\mathcal{B}}
\newcommand{\Cc}{\mathcal{C}}

\newcommand{\Mm}{\mathcal{M}}

\newcommand{\Pp}{\mathcal{P}}

\newcommand{\Tt}{\mathcal{T}}


\newtheorem{prop}{Proposition}[section]
\newtheorem{proposition}[prop]{Proposition}
\newtheorem{lemma}[prop]{Lemma} 
 
\newtheorem{corollary}[prop]{Corollary} 

\newtheorem{theorem}[prop]{Theorem}

\theoremstyle{definition}

\newtheorem{definition}[prop]{Definition}

\newtheorem{example}[prop]{Example}

\newtheorem{remark}[prop]{Remark}
\newtheorem{remarks}[prop]{Remarks}

\newcommand{\benu}{\begin{enumerate}}
	\newcommand{\enu}{\end{enumerate}}

\newcommand{\beqna}{\begin{eqnarray}}
	\newcommand{\eqna}{\end{eqnarray}}
\newcommand{\beqnast}{\begin{eqnarray*}}
	\newcommand{\eqnast}{\end{eqnarray*}}
\newcommand{\beqn}{\begin{equation}}
	\newcommand{\eqn}{\end{equation}}
\newcommand{\beqnst}{\begin{equation*}}
	\newcommand{\eqnst}{\end{equation*}}

\usepackage{latexsym}

\usepackage{xspace}
\usepackage{amscd}

\makeatother

\newcommand{\bema}{\left ( \begin{array}}
	\newcommand{\ema}{\end{array} \right )}

\newcommand{\HHom}{{_H}\mathsf{Hom}}
\newcommand{\HparHom}{{_{H_{par}}}\mathsf{Hom}}

\title{Globalization and the biactegory of partial modules}

\author[E. Batista]{Eliezer Batista}
\address{Departamento de Matem\'atica, Universidade Federal de Santa Catarina, Brazil}
\email{e.batista@ufsc.br}
\author[W. Hautekiet]{William Hautekiet}
\address{D\'epartement de Math\'ematique, Universit\'e Libre de Bruxelles, Belgium}
\email{william.hautekiet@ulb.be}
\author[J. Vercruysse]{Joost Vercruysse}
\address{D\'epartement de Math\'ematique, Universit\'e Libre de Bruxelles, Belgium}
\email{joost.vercruysse@ulb.be}

\thanks{\\ {\bf 2020 Mathematics Subject Classification}: 16T05, 18D20, 18D25. \\   {\bf Key words and phrases:} partial modules, dilations of partial modules, module categories, actegories, internal Homs.}

\flushbottom

\begin{document}
	
	\begin{abstract}
		We show that the category of partial modules over a Hopf algebra \(H\) is a biactegory (a bimodule category) over the category of global \(H\)-modules. The     corresponding enrichment of partial modules over global modules is described, and the close relation between the dilation of partial modules and Hom-objects arising from this enrichment is investigated. In particular, for finite-dimensional pointed Hopf algebras, the standard dilation of a partial module \(M\) is isomorphic to the Hom-object from the monoidal unit to \(M\).
	\end{abstract}
	
	\maketitle
	\tableofcontents
	
	\section*{Introduction}
	
	Over the last two decades there has been in a highly increased interest in actegories (also called module categories) over monoidal categories (see e.g.\ \cite{EGNObook}). Such actegories can be viewed as a categorified version of actions or representations, similar to the way monoidal categories can be viewed as a categorification of groups and Hopf algebras. Just as the category of representations of a group provides valuable information on this group, understanding the actegories over a given monoidal category gives new insight in this monoidal category itself. Henceforth, it is important to see explicit constructions for such actegories, in particular for monoidal categories of representations of Hopf algebras. In this work we present a new source of examples of such actegories, arising from partial representation theory.
	
	Partial representations and partial actions of groups first appeared in the literature in the context of $C^\ast$-algebras \cite{ExelCircle}. The original problem was to characterize some $C^\ast$-algebras endowed with an action of the unit circle by $\ast$-automorphisms. These algebras are naturally graded by the additive group of integers, but fail to be isomorphic to a usual crossed product $A\rtimes \mathbb{Z}$. However, it was shown that these algebras are isomorphic to a {\em partial} crossed product, arising from a {\em partial} action of the group $\mathbb{Z}$. Given a partial action of a group $G$ on an algebra $A$, there is a canonical map from $G$ to the associated partial crossed product. In contrast to the classical case, this map is no longer a group homomorphism and the study of its behavior led to the notion of partial representation of a group on a Hilbert space in \cite{QuiggRaeburn}.
	Later, the notion has been refined and partial representations of groups came into the purely algebraic realm in \cite{DEP}, where it was shown that partial representations of a group $G$ coincide with usual representations of a particular algebra $\Bbbk_{par} G$, constructed from the group $G$. This algebra $\Bbbk_{par} G$ is isomorphic to a groupoid algebra, and has led to many subsequent developments (see \cite{Dok1,Dok2}). 
	
	One way to produce a partial action of a group $G$ is by restricting a global $G$-action on a set to an arbitrary subset. A question that arose very early in the theory was whether all partial actions could be obtained in this way. This is known as the globalization problem. A globalization for a partial action of a group on a set always exists \cite{KL}, and also for a partial action of a group on a $C^\ast$-algebra, the construction of a globalization is possible thanks to the existence of approximate units \cite{Abadieenveloping}. In the algebraic case, a partial action of a group $G$ on a unital algebra $A$ is globalizable if, and only if, every domain $A_g$ is an ideal of $A$ generated by a central idempotent in $A$ \cite{DE}. 
	The same question can be asked for partial representations of groups: when can a partial representation of a group be obtained by a restriction of a global representation of the same group? 
	This is called the problem of dilation. The specific motivating problem that led to dilations was the search for interaction groups that extended the actions of Ore semigroups \cite{Abadiedilations}.
	
	Partial actions of groups on algebras were extended to the Hopf algebraic framework in \cite{CJ}. Soon after, the globalization theorem for partial Hopf actions was proved in \cite{ABenveloping}. Partial representations of Hopf algebras were introduced in \cite{ABVparreps}, and a globalization result for these was obtained in \cite{ABVdilations}. As in the group case, there exists a universal algebra $H_{par}$, constructed out of the Hopf algebra $H$, which factorizes partial representations of $H$ by morphisms of algebras. Therefore, the theory of partial representations of a Hopf algebra $H$ is equivalent to the theory of  representations of the algebra $H_{par}$. This algebra has a rich structure, being a Hopf algebroid over a suitable base algebra $A_{par}\subseteq H_{par}$. One can show that $H$ acts partially on $A_{par}$ and that $H_{par}$ is isomorphic to the partial smash product $A_{par} \underline{\#} H$. In a certain sense, $A_{par}$ measures the ability of the Hopf algebra to produce partial representations. When the coradical of $H$ is trivial (e.\,g.\ $H$ is the universal enveloping algebra of a Lie algebra), then $A_{par}$ is trivial and $H$ admits no partial actions or representations other than the global ones \cite{AFHconnected}. On the other hand, in case of Sweedler's 4-dimensional Hopf algebra $H_4$, the algebra $A_{par}$ is infinite-dimensional.
	
	Since $H_{par}$ is a Hopf algebroid, its category of modules obtains a closed monoidal structure. This observation brought categorical techniques into the scope of the theory of partial representations. 
	The process of dilation (globalization) of partial modules establishes a categorical equivalence between the category of partial modules of $H$ and the category of $H$-modules equipped with a certain projection compatible with the action of $H$ .
	
	The main purpose of this present work is to deepen further our understanding of the category of partial modules of a Hopf algebra $H$. 
	In particular, we will show (see \cref{cor:biactegory}) that the category $\HPMod$ of partial $H$-modules, is a biactegory (or bimodule category) over the monoidal category $\HMod$ of (global) $H$-modules, by means of the $k$-linear tensor product. In other words, the $k$-linear tensor product of a partial and a global $H$-module is again a partial module over $H$. One should remark that this actegory structure differs strictly from the above mentioned monoidal structure on the category of partial $H$-modules, as the latter is built on the $A_{par}$-balanced tensor product.
	In Section 3, we show that the functors, given by tensoring global modules on the left or the right with a fixed partial module, allow right adjoints. This proves that the category $\HPMod$ can be enriched over the categories of $\HMod$ and $\HMod^{\mathrm{rev}}$ (see \cref{enrichment}). Section 4 is dedicated to the relationship between the Hom-objects arising from this enrichment and dilations of partial $H$-modules. For any partial $H$-module $M$, there in a natural injective morphism of $H$-modules between its so-called standard dilation $\overline M$ and the Hom-object $\{ A_{par} ,M \}$ (\cref{cor:Xi}). We show that for a pointed Hopf algebra with a finite group of grouplike elements, this injective map is in fact an isomorphism (\cref{th:Xi_iso_pointed}). This reveals a deep relation between the globalization question for partial representations and the actegory structure that we introduce here.
	Finally, for a finite-dimensional Hopf algebra $H$, we define a new Hopf algebroid $H_{glob}= \{ A_{par} ,A_{par} \} \# H$ in Section 5. We prove in \cref{H_par_Morita_H_glob} that for a finite dimensional pointed Hopf algebra $H$,  $H_{glob}$ is Morita equivalent to $H_{par}\cong A_{par}\underline{\#} H$, using a result about Morita equivalence between partial smash products and global smash products involving the globalization from \cite{ABenveloping}. The Hopf algebroid \(H_{glob}\) generalizes the groupoid algebra $\Bbbk_{glob}G$ proposed in \cite{Velascothesis} obtained from the globalization of partial actions of a finite group $G$: there is a sequence of algebra isomorphisms
	\[
	\Bbbk_{glob} G\cong \overline{A_{par}(G)}\# \Bbbk G \cong \{ A_{par} (G) , A_{par} (G) \} \# \Bbbk G \cong (\Bbbk G)_{glob}.
	\]
	
	\section{Preliminaries and first results}
	
	Throughout this article, $H$ is a Hopf algebra over a field $k$ with invertible antipode \(S\). We will adopt the Sweedler notation for its comultiplication, writing \(\Delta(h) = h_{(1)} \otimes h_{(2)}\) for any \(h \in H\). 
	The action of a Hopf algebra \(H\) on a (global) left module \(X\) will be generally denoted by \(\btright \ : H \otimes X \to X\). If \(X\) is right \(H\)-module and \(V\) is a vector space, then \(\Hom_k(X, V)\) has a natural \(H\)-module structure that we denote by \(\rightharpoonup\). It is defined by \((h \rightharpoonup f)(x) = f(x \btleft h)\) for \(h \in H, f \in \Hom_k(X, V)\) and \(x \in X\).
	
	\subsection{Partial modules and partial representations}\label{se:partialprelim}
	
	\begin{definition}[\cite{ABVparreps}]
		A partial representation of $H$ is a pair \((B, \pi)\), where \(B\) is a $k$-algebra and \(\pi\) is a $k$-linear map $H \to B$ satisfying the following properties:
		\begin{enumerate}[(PR1), leftmargin = 2 cm]
			\item $\pi(1_H) = 1_B;$ \label{PR1}
			\item \(\pi(h)\pi(k_{(1)}) \pi(S(k_{(2)})) = \pi(hk_{(1)}) \pi(S(k_{(2)})),\) for all \(h, k \in H\); \label{PR2}
			\item \(\pi(h_{(1)}) \pi(S(h_{(2)})) \pi(k) = \pi(h_{(1)}) \pi(S(h_{(2)})k),\) for all \(h, k \in H\). \label{PR3}
		\end{enumerate}
	\end{definition}
	
	By Saracco's Lemma (see \cite[Lemma 2.11]{ABCQVparcorep}), axioms \ref{PR2} and \ref{PR3} can be replaced by
	\begin{enumerate}[(PR1), leftmargin = 2 cm]
		\setcounter{enumi}{3}
		\item \(\pi(h) \pi(S(k_{(1)})) \pi(k_{(2)}) = \pi(hS(k_{(1)})) \pi(k_{(2)})\), for all \(h,k\in H\); \label{PR4}
		\item \(\pi(S(h_{(1)})) \pi(h_{(2)}) \pi(k) = \pi(S(h_{(1)})) \pi(h_{(2)}k)\), for all $h,k\in H$. \label{PR5}
	\end{enumerate}
	If \(B = \End_k(M)\), then we say that \(M\) is a partial \(H\)-module and we write
	\[h \bullet m = \pi(h)(m)\]
	for \(h \in H, m \in M\).
	Throughout the paper we will denote partial action by the bullet \(\bullet\) in order to distinguish them from global actions, which are denoted by \(\btright\) or \(\rightharpoonup\), as mentioned above.
	
	A morphism between partial $H$-modules $M$ and $N$ is a $k$-linear map $f:M\rightarrow N$ such that
	\[
	f(h\bullet m)=h\bullet f(m), \qquad \forall h\in H, \; m\in M
	\]
	The category of partial $H$-modules will be denoted by ${}_H\PMod$.
	
	By quotienting the tensor algebra over the vector space $H$ by the relations defining a partial representation, one obtains a new algebra $H_{par}$ along with a partial representation $[-]:H\to H_{par}$ with the universal property that
	any partial representation $H\to B$ factors through an algebra map $H_{par}\to B$, in other words,
	partial representations of $H$ coincide with (global) representations of $H_{par}$ (\cite[Theorem 4.2]{ABVparreps}). 
	Hence we have the following identities, which in fact define $H_{par}$:
	$$[1_H]=1_{H_{par}}, \quad
	[h][k_{(1)}][S(k_{(2)})]=[hk_{(1)}][S(k_{(2)})], \quad
	[h_{(1)}][S(h_{(2)})][k]=[h_{(1)}][S(h_{(2)})k].$$
	Although $H_{par}$ is often called the ``partial Hopf algebra'' of $H$, it is in fact not a Hopf algebra, but rather a Hopf algebroid over the base algebra $A_{par}$, which is the subalgebra of $H_{par}$ generated by elements of the form
	\[
	\varepsilon_h =[h_{(1)}][S(h_{(2)})] \in H_{par}.
	\]
	The Hopf algebroid structure of $H_{par}$ turns the category of partial $H$-modules into a monoidal category, whose monoidal product is $\otimes_{A_{par}}$ and whose monoidal unit is $A_{par}$. The \(A_{par}\)-bimodule structure on a partial \(H\)-module \(M\) is given by
	\begin{align}
		\varepsilon_h \cdot m &= h_{(1)} \bullet (S(h_{(2)}) \bullet m), \label{eq:leftAaction} \\
		m \cdot \varepsilon_h &= h_{(2)} \bullet (S^{-1}(h_{(1)}) \bullet m). \label{eq:rightAaction}
	\end{align}
	
	\begin{example}[\cite{DEP}] 
		\label{ex:group}
		Partial representations of a group algebra \(kG\) are exactly the so-called partial representations of the group $G$. The algebra $(kG)_{par}$ is usually denoted by $k_{par} G$, generated by symbols $[g]$, for elements $g\in G$ and satisfying 
		\[
		[e]=1_{k_{par}G}, \qquad [g][h][h^{-1}]=[gh][h^{-1}], \qquad [g^{-1}][g][h]=[g^{-1}][gh].
		\]
		In this case, the elements $\varepsilon_g =[g][g^{-1}]$ are idempotents and commute among each other.
	\end{example}
	
	Since the category of partial $H$-modules is monoidal, one can consider the algebra objects inside that category. These are called the partial actions of $H$. Historically, the notion of a partial $H$-action appeared before that of a partial $H$-module \cite{ABenveloping, CJ}. 
	
	\begin{definition}
		A symmetric partial action of a Hopf algebra $H$ on a unital algebra $A$ is a $k$-linear map $\bullet :H\otimes A\rightarrow A$ such that
		\begin{enumerate}
			\item[(PA1)] $1_H \bullet a=a$, for all $a\in A$.
			\item[(PA2)] $h\bullet (ab)=(h_{(1)} \bullet a)(h_{(2)} \bullet b)$, for all $h\in H$, $a,b\in A$.
			\item[(PA3)] $h\bullet (k \bullet a)=(h_{(1)} \bullet 1_A )(h_{(2)}k \bullet a)=(h_{(1)}k \bullet a)(h_{(2)} \bullet 1_A )$, for all $h,k\in H$, $a\in A$. 
		\end{enumerate}
		The algebra $A$ is also called a partial $H$-module algebra.
	\end{definition}
	
	Given a partial $H$-module algebra $A$, one can define a multiplication on $A\otimes H$ given by
	\begin{equation}
		(a\otimes h)(b\otimes k)=a(h_{(1)}\bullet b)\otimes h_{(2)}b. \label{eq:multiplication_smash}
	\end{equation}
	This multiplication is associative, but $A\otimes H$ need not contain a unit element. Consider the subspace
	\[
	A\underline{\#} H=(A\otimes H)(1_A \otimes 1_H),
	\]
	which is generated by elements of the type
	\[
	a\# h=a(h_{(1)}\bullet 1_A)\otimes h_{(2)} .
	\]
	Now this defines a unital algebra with the multiplication \eqref{eq:multiplication_smash}, called the \textit{partial smash product} $A\underline{\#} H$ in \cite{ABenveloping}. For every $a\in A$ and $h\in H$, we have the identity 
	\begin{equation}
		\label{eq:smash_rewriting}
		a(h_{(1)} \bullet 1_{A_{par}}) \# h_{(2)} = a(h_{(1)} \bullet 1_{A_{par}})(h_{(2)} \bullet 1_{A_{par}}) \otimes h_{(3)} = a(h_{(1)} \bullet 1_{A_{par}}) \otimes h_{(2)} =  a \# h.
	\end{equation}
	The algebra \(A_{par}\) is in fact a partial module algebra; the partial action of \(H\) on \(A_{par}\) is given by
	\begin{equation}
		\label{eq:partial_module_Apar}
		h\bullet \varepsilon_{k^1} \cdots \varepsilon_{k^n} =[h_{(1)}]\varepsilon_{k^1} \cdots \varepsilon_{k^n} [S(h_{(2)})]=\varepsilon_{h_{(1)}k^1} \cdots \varepsilon_{h_{(n)}k^n} \varepsilon_{h_{(n+1)}}.
	\end{equation}
	The partial Hopf algebra $H_{par}$ is then isomorphic to the partial smash product $A_{par} \underline{\#} H$ by means of the map
	\begin{equation}	
		\label{eq:sigma}
		\sigma : A_{par}\underline{\#} H \to H_{par}, \quad a \# h \mapsto a[h].
	\end{equation}

	For more information and proofs of the results mentioned here, we refer to \cite{ABVparreps}.
	
	\subsection{Dilations of partial modules and globalizations of partial actions}
	\label{se:dilation}
	
	It is possible to construct partial $H$-modules out of global ones by using projections \cite{ABVdilations}. More precisely, given an $H$-module $N$, we say that a $k$-linear projection $T:N\rightarrow N$ satisfies the so called \emph{c-condition} if for every element $h\in H$, we have
	\begin{equation}
		T\circ T_h =T_h \circ T, \label{eq:ccondition}
	\end{equation}
	where
	\(
	T_h (n) =h_{(1)} \btright T(S(h_{(2)})\btright n) .
	\)
	In this case, the image $T(N)$ can be made into a partial $H$-module by
	\begin{equation}
		\label{eq:TM}
		h\bullet T(n)=T(h\btright T(n)).
	\end{equation}
	
	\begin{definition}[{\cite{ABVdilations}}]\label{defofdilation}
		A \emph{dilation} of a partial $H$-module $M$ is a triple $(N,T,\theta )$ in which
		\begin{enumerate}[(D{i}1), leftmargin = 2 cm]
			\item $N$ is an $H$-module;
			\item $T=T^2:N\rightarrow N$ is a projection satisfying the c-condition \eqref{eq:ccondition};
			\item $\theta :M\rightarrow T(N)$ is an isomorphism of partial $H$-modules. 
		\end{enumerate}
		We say that a dilation $(N,T, \theta)$ is proper if $N$ is generated by $T(N)=\theta (M)$ as an $H$-module and that this dilation is minimal if $N$ doesn't contain any nontrivial submodule that is annihilated by $T$. 
		
		A morphism between two dilations $(N_1 ,T_1 , \theta_1)$ and $(N_2 ,T_2 ,\theta_2 )$ of a partial $H$-module $M$ is a morphism of $H$-modules $f: N_1 \rightarrow N_2$ such that \(T_2 \circ f=f\circ T_1\)
		and	\(f\circ \theta_1 =\theta_2\).
	\end{definition}
	
	\begin{theorem}[{\cite[Theorem 4.3]{ABVdilations}}]
		\label{th:standard_dilation}
		Every partial $H$-module $M$ admits a proper and minimal dilation $(\overline{M} , \overline{T} , \varphi )$, called the standard dilation.
	\end{theorem}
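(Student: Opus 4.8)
The plan is to realize the standard dilation concretely inside a coinduced module, so that the three conditions of \cref{defofdilation} together with properness and minimality can be checked directly against the partial‑representation axioms. Equip the vector space $\Hom_k(H,M)$ with the left $H$-action $(h\rightharpoonup f)(x)=f(xh)$ — this is the action $\rightharpoonup$ from the preliminaries applied to $H$ regarded as a right module over itself, so $\Hom_k(H,M)$ is a genuine (global) $H$-module. Define $\varphi\colon M\to\Hom_k(H,M)$ by $\varphi(m)(x)=x\bullet m$, take $\overline M$ to be the $H$-submodule generated by $\varphi(M)$, i.e.\ $\overline M=\Span\{h\rightharpoonup\varphi(m)\mid h\in H,\ m\in M\}$, and define $\overline T\colon\Hom_k(H,M)\to\Hom_k(H,M)$ by $\overline T(f)=\varphi\big(f(1_H)\big)$, equivalently $\overline T(f)(x)=x\bullet f(1_H)$. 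As $\overline M$ is a submodule of an $H$-module, condition (Di1) is immediate.

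The first, routine, step is to record the structural identities. Since $1_H\bullet m=m$ by \ref{PR1}, the map $\varphi$ is injective (because $\varphi(m)(1_H)=m$) and $\overline T$ restricts to the identity on $\varphi(M)$; combined with $\overline T(f)\in\varphi(M)\subseteq\overline M$ this gives $\overline T^2=\overline T$ and $\overline T(\overline M)=\varphi(M)\cong M$. Properness holds by construction, since $\overline M$ is by definition generated by $\overline T(\overline M)=\varphi(M)$. Next I would check that $\varphi$ intertwines the partial actions: unwinding \eqref{eq:TM} with $\overline T(f)=\varphi(m)$ and $m=f(1_H)$, one finds $h\bullet\overline T(f)=\overline T\big(h\rightharpoonup\varphi(m)\big)=\varphi\big((h\rightharpoonup\varphi(m))(1_H)\big)=\varphi\big(\varphi(m)(h)\big)=\varphi(h\bullet m)$, so $\varphi$ is an isomorphism of partial $H$-modules onto $\overline T(\overline M)$, giving (Di3).

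The main obstacle, and the technical heart of the argument, is the c-condition \eqref{eq:ccondition}, which is exactly where \ref{PR2} and \ref{PR3} enter essentially. Writing $\pi(x)=x\bullet(-)$, a direct computation on a generator $f=h'\rightharpoonup\varphi(m)$, evaluated at an arbitrary $x\in H$ as an operator applied to $m$, gives $\overline T\circ\overline T_h$ equal to $\pi(x)\pi(h_{(1)})\pi\big(S(h_{(2)})h'\big)$ and $\overline T_h\circ\overline T$ equal to $\pi(x\,h_{(1)})\pi\big(S(h_{(2)})\big)\pi(h')$. These agree by chaining the two axioms: \ref{PR2} (with $h\leftarrow x$, $k\leftarrow h$, multiplied on the right by $\pi(h')$) rewrites $\pi(xh_{(1)})\pi(S(h_{(2)}))\pi(h')=\pi(x)\pi(h_{(1)})\pi(S(h_{(2)}))\pi(h')$, after which \ref{PR3} (with $k\leftarrow h'$) gives $\pi(h_{(1)})\pi(S(h_{(2)}))\pi(h')=\pi(h_{(1)})\pi(S(h_{(2)})h')$. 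Since both composites are $k$-linear in $f$ and the elements $h'\rightharpoonup\varphi(m)$ span $\overline M$, the c-condition follows on all of $\overline M$, establishing (Di2). The delicate point I anticipate is that this identity is false on all of $\Hom_k(H,M)$ and holds only on the submodule generated by $\varphi(M)$; it is precisely the reduction to these generators that makes the partial‑representation axioms applicable.

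Finally, minimality is quick. Suppose $K\subseteq\overline M$ is an $H$-submodule with $\overline T(K)=0$. For $f\in K$ and $h\in H$ we have $h\rightharpoonup f\in K$, hence $0=\overline T(h\rightharpoonup f)=\varphi\big((h\rightharpoonup f)(1_H)\big)=\varphi\big(f(h)\big)$; injectivity of $\varphi$ forces $f(h)=0$ for every $h\in H$, so $f=0$ and $K=0$. Thus $(\overline M,\overline T,\varphi)$ is the desired proper and minimal dilation.
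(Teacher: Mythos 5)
Your proposal is correct and uses exactly the construction the paper recalls for the standard dilation: the module $\Hom_k(H,M)$ with action $(h\rightharpoonup f)(k)=f(kh)$, the embedding $\varphi(m)(h)=h\bullet m$, the submodule $\overline M=H\rightharpoonup\varphi(M)$, and the projection $\overline T(f)=\varphi(f(1_H))$. Your verifications of the c-condition (via \ref{PR2} and \ref{PR3} on the generators $h'\rightharpoonup\varphi(m)$), of properness, and of minimality supply precisely the details the paper delegates to the cited reference, and they are all sound.
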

	
	For sake of completeness, let us recall the construction of the standard dilation of a partial $H$-module $M$.
	\begin{enumerate}
		\item Consider the natural $H$-module structure on $\Hom_k (H,M)$ given by $(h \rightharpoonup f)(k)=f(kh)$.
		\item Define the $k$-linear map $\varphi :M\rightarrow \Hom_k (H,M)$ by $\varphi (m)(h)=h\bullet m$. It is easy to see that $\varphi$ is injective, so we have an isomorphic copy of $M$ as a subspace of $\Hom_k (H,M)$.
		\item Define $\overline{M} =H\rightharpoonup \varphi (M)$, the $H$-submodule generated by $\varphi (M)$. A typical element of $\overline{M}$ is of the form
		\[
		f=\sum_i h_i \rightharpoonup \varphi (m_i ),
		\]
		which, as a map from $H$ to $M$, is explicitly written as
		\[
		f(k) =\sum_i kh_i \bullet m_i.
		\]
		\item The projection $\overline{T}:\overline{M} \rightarrow \overline{M}$ is given by
		\[
		\overline{T} (f) =\varphi (f(1_H)).
		\]
	\end{enumerate}
	
	The standard dilation defines an equivalence \(\overline{D}\) between $\HPMod$ and the category $\mathbb{T}(\HMod )$, whose objects are pairs $(N,T)$ with $N$ a global $H$-module and $T:N\rightarrow N$ a linear projection satisfying the c-condition, and whose morphisms are $H$-module maps intertwining their respective projections. The inverse equivalence is given by the restriction functor, sending a pair \((N, T)\) in $\mathbb{T}(\HMod )$ to the partial module \(T(N)\). The composition of \(\overline{D}\) with the forgetful functor \(\mathbb{T}(\HMod ) \to \HMod\) is called the \textit{dilation functor} and is denoted by \(D\).
	
	One partial $H$-module can admit several distinct dilations whose underlying global $H$-modules are not isomorphic, but the standard dilation is (up to isomorphism) the unique minimal and proper dilation associated to each partial $H$-module.
	This is because the standard dilation satisfies two universal properties. The first is \cite[Proposition 4.7]{ABVdilations} and says that for any proper dilation \((N, T, \theta)\) of a partial module \(M,\) there is a unique surjective \(H\)-linear map \(\Phi : N \to \overline{M}\) such \(\overline{T} \circ \Phi = \Phi \circ T\) and \(\Phi \circ \theta = \varphi\). The second universal property is with respect to minimal dilations and is formulated in the next proposition. It did not appear in the literature before.
	
	\begin{proposition}
		\label{prop:universal_minimal}
		Let \(M\) be a partial \(H\)-module and \((\overline{M}, \overline{T}, \varphi)\) its standard dilation. For any minimal dilation \((N, T, \theta)\) of \(M,\) there exists a unique injective \(H\)-linear map \(\Lambda : \overline{M} \to N\) such that \(T \circ \Lambda = \Lambda \circ \overline{T}\) and \(\Lambda \circ \varphi = \theta\), which is bijective if and only if \((N, T, \theta)\) is proper.
	\end{proposition}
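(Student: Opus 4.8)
The plan is to define \(\Lambda\) by the only formula its required properties permit, and then to let minimality do the real work in the well-definedness check. Since \(\overline{M} = H \rightharpoonup \varphi(M)\) is generated by \(\varphi(M)\) as an \(H\)-module, any \(H\)-linear map \(\Lambda\) (with \(\overline{M}\) carrying the \(\rightharpoonup\)-action and \(N\) the \(\btright\)-action) satisfying \(\Lambda \circ \varphi = \theta\) is forced to obey
\[
\Lambda\Big(\textstyle\sum_i h_i \rightharpoonup \varphi(m_i)\Big) = \textstyle\sum_i h_i \btright \theta(m_i).
\]
I would take this as the definition. Uniqueness is then immediate, because the values on \(\varphi(M)\) together with \(H\)-linearity pin down \(\Lambda\) on all of \(\overline{M}\); note that this already settles the uniqueness claim independently of the intertwining condition.

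The main obstacle is well-definedness: I must show that if \(f := \sum_i h_i \rightharpoonup \varphi(m_i) = 0\) in \(\overline{M}\), i.e.\ \(f(k) = \sum_i (kh_i) \bullet m_i = 0\) for all \(k \in H\), then \(n := \sum_i h_i \btright \theta(m_i) = 0\) in \(N\). This is exactly where minimality enters. Using that \(\theta(m_i) \in T(N)\), that \(\theta\) is a morphism of partial modules, and the definition \eqref{eq:TM} of the partial action on \(T(N)\), I would compute, for every \(k \in H\),
\[
T(k \btright n) = \sum_i T\big((kh_i) \btright T(\theta(m_i))\big) = \sum_i \theta\big((kh_i) \bullet m_i\big) = \theta\big(f(k)\big).
\]
Since \(f = 0\) as a function, this gives \(T(k \btright n) = 0\) for all \(k\); that is, \(T\) annihilates the \(H\)-submodule \(H \btright n\) generated by \(n\). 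Minimality of \((N,T,\theta)\) then forces \(H \btright n = 0\), whence \(n = 0\) and \(\Lambda\) is well defined.

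The same displayed identity yields both remaining structural properties cheaply. Reading it for a general \(f\) as \(T(k \btright \Lambda(f)) = \theta(f(k))\), specializing to \(k = 1_H\) and using \(\overline{T}(f) = \varphi(f(1_H))\) together with \(\Lambda \circ \varphi = \theta\) gives \(T \circ \Lambda = \Lambda \circ \overline{T}\). For injectivity, if \(\Lambda(f) = 0\) then \(\theta(f(k)) = 0\) for all \(k\), and since \(\theta\) is injective we get \(f(k) = 0\) for all \(k\), i.e.\ \(f = 0\); thus \(\Lambda\) is injective.

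Finally, for the biconditional I would observe that the image of \(\Lambda\) is precisely the \(H\)-submodule of \(N\) generated by \(\theta(M) = T(N)\). Hence \(\Lambda\) is surjective exactly when \(N\) is generated by \(T(N)\) as an \(H\)-module, which is the definition of a proper dilation. Combined with injectivity, this shows \(\Lambda\) is bijective if and only if \((N,T,\theta)\) is proper, completing the argument. The only genuinely delicate point is the well-definedness step, and the whole difficulty there is isolated in the reduction to a \(T\)-annihilated submodule, which minimality kills.
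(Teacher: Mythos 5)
Your proof is correct and follows essentially the same route as the paper: the same forced formula for \(\Lambda\), minimality invoked to kill the \(T\)-annihilated submodule in the well-definedness step, and the same key identity \(T(k \btright \Lambda(f)) = \theta(f(k))\) driving the intertwining and injectivity arguments. Your only (welcome) refinement is organizing everything around that single identity and noting explicitly that the image of \(\Lambda\) is the submodule generated by \(\theta(M)\), which cleanly gives both directions of the properness biconditional, whereas the paper spells out only the ``if'' direction.
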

	\begin{proof}
		For any \(x = \sum_i h_i \rightharpoonup \varphi(m_i) \in \overline{M},\) define
		\[\Lambda(x) = \sum_i h_i \btright \theta(m_i) \in N.\]
		Let us show that the map \(\Lambda : \overline{M} \to N\) is well-defined. If \(h'_j \in H\) and \(m_j' \in M\) are such that also \(x = \sum_j h'_j \rightharpoonup \varphi(m_j')\), then for any \(l \in H\)
		\[\sum_i lh_i \bullet m_i = \sum_j lh_j' \bullet m_j',\]
		hence
		\[T\left(\sum_i lh_i \btright \theta(m_i)\right) = \theta\left(\sum_i lh_i \bullet m_i\right) = \theta\left(\sum_j lh'_j \bullet m'_j\right) = T\left(\sum_j lh_j' \btright \theta(m_j') \right)\]
		because \(\theta\) is a morphism of partial modules, and the partial module structure on \(T(N)\) is given by \eqref{eq:TM}. This shows that the \(H\)-submodule of \(N\) generated by
		\[\sum_i h_i \btright \theta(m_i) - \sum_k h'_j \btright \theta(m'_j)\]
		is annihilated by \(T,\) so it is zero since \((N, T, \theta)\) is a minimal dilation. 
		
		It is clear that \(\Lambda\) is \(H\)-linear and that \(\Lambda \circ \varphi = \theta\). Moreover, \(\Lambda\) is the unique map \(\overline{M} \to N\) with these properties. Let us show that \(\Lambda\) is injective. For any \(x = \sum_i h_i \rightharpoonup \varphi(m_i) \in \overline{M},\)
		\begin{equation*}
			T(\Lambda(x)) = T\left(\sum_i h_i \btright \theta(m_i)\right) = \theta\left(\sum_i h_i \bullet m_i\right)
			= (\Lambda\circ \varphi)\left(\sum_i h_i \bullet m_i\right) = \Lambda(\overline{T}(x)),
		\end{equation*}
		and if \(\Lambda(x) = 0\), then for any \(l \in H,\)
		\[\theta\left(\sum_i lh_i \bullet m_i\right) = T\left( \sum_i lh_i \btright \theta(m_i)\right) = 0.\]
		But \(\theta\) is an isomorphism, so this implies that
		\[\left(\sum_i h_i \rightharpoonup \varphi(m_i)\right)(l) = \sum_i lh_i \bullet m_i = 0.\]
		
		If \((N, T, \theta)\) is moreover proper, then any element of \(N\) is of the form \(\sum_i h_i \btright \theta(m_i)\), so then \(\Lambda\) is clearly surjective too.
	\end{proof}
	
	If \(M = A\) is a partial \(H\)-module algebra, we speak about \textit{globalizations} (or enveloping actions) rather than dilations. Globalizations of partial actions appeared in the literature before dilations of partial modules (see \cite{ABenveloping}). We reformulate \cite[Definition 3]{ABenveloping} below.
	
	\begin{definition}
		\label{def:globalization}
		Let \(A\) be a partial \(H\)-module algebra. A \textit{globalization} of \(A\) is a pair \((B, \theta)\), where \(B\) is a (not necessarily unital) \(H\)-module algebra and \(\theta : A \to B\) is an injective and multiplicative map, such that
		\begin{enumerate}[(i)]
			\item \(\theta(A)\) is a right ideal in \(B\),
			\item the triple \((B, T_\theta, \theta)\) is a dilation of the partial module \(A\), where \(T_\theta\) is the projection
			\[T_\theta : B \to B, \quad b \mapsto \theta(1_A) b.\]
		\end{enumerate}
		As for dilations, the globalization is called \textit{proper} if \(B\) is generated by \(\theta(A)\) as an \(H\)-module.
	\end{definition}
	The construction given after \cref{th:standard_dilation} applied to a partial module algebra \(A\) provides a globalization of \(A,\) which is called the standard globalization. Indeed, since 
	\begin{equation}
		\label{eq:multiplication_globalization}
		(h\rightharpoonup \varphi (a)) * (k\rightharpoonup \varphi (b))=h_{(1)} \rightharpoonup \varphi (a(S(h_{(2)})k \bullet b))
	\end{equation}
	(see \cite[Lemma 2]{ABenveloping}), the convolution product on $\Hom_k (H,A)$ restricts to a well-defined composition on the submodule $H\rightharpoonup \varphi (A)$, turning it into an $H$-module algebra. Moreover, it is clear from \eqref{eq:multiplication_globalization} that \(\varphi : A \to \Hom_k (H,A)\) is multiplicative and that its image is a right ideal.

	\subsection{Actegories and biactegories} 
	
	Actegories are the categorification of modules: a monoidal category acts on a category by means of an action functor. The following definitions can be found in \cite{CGamthematician} and \cite{JKactegories}.
	
	\begin{definition}
		\label{def:actegory}
		Let \((\mathcal{C}, \otimes, I, a, \ell, r)\) be a monoidal category. A left action of \(\mathcal{C}\) on a category \(\mathcal{M}\) consists of 
		\begin{enumerate}[(i)]
			\item a functor \(\tright\, : \mathcal{C} \times \mathcal{M} \to \mathcal{M}\);
			\item a natural isomorphism \(\lambda_M : I \tright M \to M\) called \textit{unitor};
			\item a natural isomorphism \(\mu_{X, Y, M} : (X \otimes Y) \tright M \to X \tright (Y \tright M)\) called \textit{multiplicator},
		\end{enumerate}
		such that 
		\begin{align*}
			\lambda_{X \tright M} \circ \mu_{I, X, M} &= \ell_X \tright M, \\
			(X \tright \lambda_M) \circ \mu_{X, I, M} &= r_X \tright M, \\
			(X \tright \mu_{Y, Z, M}) \circ \mu_{X, Y \otimes Z, M} &= \mu_{X, Y, Z \tright M} \circ \mu_{X \otimes Y, Z, M} \circ (a_{X, Y, Z} \tright M).
		\end{align*}
		
		This is in fact equivalent to having a strong monoidal functor \(\mathcal{C} \to [\mathcal{M}, \mathcal{M}]\) (it is given by \(X \mapsto X \tright -\)). We say that \(\mathcal{M}\) is a left \(\mathcal{C}\)-\textit{actegory}\footnote{In the literature, also the term {\em $\Cc$-module category} is used. However, we prefer to avoid this terminology in order not to confuse `module category' with `category of modules'.}. Right \(\mathcal{C}\)-actegories are defined using right actions of \(\mathcal{C}\) on \(\mathcal{M}\) and correspond to strong monoidal functors \(\mathcal{C} \to [\mathcal{M}, \mathcal{M}]^{\mathrm{rev}}\), where \([\mathcal{M}, \mathcal{M}]^{\mathrm{rev}}\) is the same category as \([\mathcal{M}, \mathcal{M}]\), but with the reversed monoidal product.
		
		A \(\Cc\)-biactegory is a category \(\Mm\) equipped with a left action \(\tright\) and a right action \(\tleft\) of \(\Cc\) together with a natural isomorphism, called \textit{bimodulator},
		\[\zeta_{X, M, Y} : X \tright (M \tleft Y) \to (X \tright M) \tleft Y,\]
		satisfying suitable coherence axioms.
	\end{definition}
	
	\begin{definition}
		\begin{enumerate}[(i)]
			\item A (strong) \(\mathcal{C}\)-linear functor between left \(\mathcal{C}\)-actegories \((\mathcal{M}, \tright)\) and \((\mathcal{N}, \diamond)\) is a functor \(F : \mathcal{M} \to \mathcal{N}\) equipped with a natural isomorphism (called  \textit{lineator})
			\[\ell_{X, M} : X \diamond F(M) \to F(X \tright M)\]
			compatible with the unitors and multiplicators. If all \(\ell_{X, M}\) are identity morphisms, \(F\) is called strict.
			\item A \(\mathcal{C}\)-linear natural transformation between \(\mathcal{C}\)-linear functors \((F, \ell)\) and \((G, \ell')\) is a natural transformation \(\alpha : F \Rightarrow G\) such that
			\[\begin{tikzcd}
				X \diamond F(M) \arrow{r}{X \diamond \alpha_M} \arrow{d}{\ell_{X, M}} & X \diamond G(M) \arrow{d}{\ell'_{X, M}} \\
				F(X \tright M) \arrow{r}{\alpha_{X \tright M}} & G(X \tright M).
			\end{tikzcd}\]
		\end{enumerate}
	\end{definition}
	
	For a \(k\)-bialgebra \(B\), the category \({_B}\Mod\) acts on \(\Vect_k\); this is the restriction of the regular action of \(\Vect_k\) on itself to \(\otimes : {_B}\Mod \times \Vect_k \to \Vect_k\). The following reconstruction result is well-known, but we provide a proof for sake of completeness.
	
	\begin{proposition}
		\label{prop:reconstruction}
		Let \(B\) be a \(k\)-bialgebra and \(A\) be a \(k\)-algebra. Then there is a one-to-one correspondence between
		\begin{enumerate}[(i)]
			\item left \(B\)-comodule algebra structures on \(A\);
			\item strong monoidal functors of the form
			\begin{equation}
				\label{eq:BModaction}
				{_B} \Mod \to [{_A}\Mod, {_A}\Mod],\quad X \mapsto (X \otimes -),
			\end{equation}
			\item \({_B}\Mod\)-actegory structures on \({_A}\Mod\) such that the forgetful functor \({_A}\Mod \to \Vect_k\) is strict \({_B}\Mod\)-linear. 
		\end{enumerate}
	\end{proposition}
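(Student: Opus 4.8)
The plan is to prove the cyclic chain of implications, treating (i)$\Leftrightarrow$(ii) as the substantial content and deducing (ii)$\Leftrightarrow$(iii) from the equivalence between left actions and strong monoidal functors recorded in \cref{def:actegory}. Throughout I write a left $B$-coaction on $A$ as $\rho : A \to B \otimes A$, $\rho(a) = a_{(-1)} \otimes a_{(0)}$, and recall that $A$ being a $B$-comodule algebra means exactly that $\rho$ is a morphism of unital algebras into the tensor-product algebra $B \otimes A$, i.e.\ $\rho(1_A) = 1_B \otimes 1_A$ and $\rho(ab) = a_{(-1)}b_{(-1)} \otimes a_{(0)}b_{(0)}$, together with coassociativity and counitality of $\rho$.

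For (i)$\Rightarrow$(ii), given $\rho$ I would equip, for each $B$-module $X$ and each $A$-module $M$, the space $X \otimes M$ with the operation $a \cdot (x \otimes m) = (a_{(-1)} \btright x) \otimes (a_{(0)} m)$. The unit axiom of this $A$-action follows from $\rho(1_A) = 1_B \otimes 1_A$ and associativity from multiplicativity of $\rho$; naturality in $X$ and $M$ is immediate, so $X \mapsto (X \otimes -)$ is a well-defined functor ${_B}\Mod \to [{_A}\Mod, {_A}\Mod]$. It then remains to promote the canonical isomorphisms of $\Vect_k$ to the monoidal natural transformations: one checks that the associator $(X \otimes Y) \otimes M \to X \otimes (Y \otimes M)$ is $A$-linear precisely because $\rho$ is coassociative, and that the unitor $k \otimes M \to M$ is $A$-linear precisely because $\rho$ is counital (here $k$ is the trivial $B$-module, so $b \btright 1_k = \varepsilon_B(b) 1_k$). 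The pentagon and triangle hold because they already hold in $\Vect_k$.

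For the converse (ii)$\Rightarrow$(i) the key point is that the prescribed form $X \mapsto (X \otimes -)$ rigidifies the action. I would set $\rho(a) = a \cdot (1_B \otimes 1_A)$ using the $A$-module structure the functor places on $B \otimes A$ (with $B$ the regular $B$-module and $A$ the regular $A$-module). Naturality then forces the action on every $X \otimes M$: applying $A$-linearity of $\id_B \otimes (\cdot\, m) : B \otimes A \to B \otimes M$ (via the $A$-linear map $A \to M$, $a' \mapsto a'm$), of $r_b \otimes \id : B \otimes M \to B \otimes M$ (right multiplication $r_b$ is left $B$-linear), and of $g_x \otimes \id$ (where $g_x : B \to X$, $b \mapsto b \btright x$ is $B$-linear) yields successively $a \cdot (x \otimes m) = (a_{(-1)} \btright x) \otimes (a_{(0)} m)$. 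With this formula, $\rho(1_A) = 1_B \otimes 1_A$ and $\rho(ab) = \rho(a)\rho(b)$ follow from the unit and associativity of the action; coassociativity $(\Delta \otimes \id)\rho = (\id \otimes \rho)\rho$ is read off from $A$-linearity of the multiplicator $\mu_{B, B, A}$ evaluated at $(1_B \otimes 1_B) \otimes 1_A$ (using the diagonal $B$-action on the monoidal product $B \otimes B$); and counitality from $A$-linearity of the unitor $\lambda_A$ at $1_k \otimes 1_A$. Finally I would verify the two constructions are mutually inverse — starting from $\rho$ and recomputing $a \cdot (1_B \otimes 1_A)$ returns $\rho$, while starting from an action the displayed formula shows it agrees with the one rebuilt from the recovered $\rho$ — giving the claimed bijection.

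The equivalence (ii)$\Leftrightarrow$(iii) is then formal: by \cref{def:actegory} a left ${_B}\Mod$-action on ${_A}\Mod$ is the same datum as a strong monoidal functor ${_B}\Mod \to [{_A}\Mod, {_A}\Mod]$, and the requirement that the forgetful functor $U : {_A}\Mod \to \Vect_k$ be strict ${_B}\Mod$-linear says exactly that $U(X \btright M) = X \otimes U(M)$ with unitor and multiplicator sent to the canonical isomorphisms of $\Vect_k$, i.e.\ that the associated monoidal functor has the form $X \mapsto (X \otimes -)$, matching (ii). I expect the main obstacle to be the rigidity argument in (ii)$\Rightarrow$(i): one must exploit naturality in both variables carefully to pin the action down to the comodule-algebra formula, and must correctly match each coherence isomorphism with its coalgebra axiom (associator $\leftrightarrow$ coassociativity, unitor $\leftrightarrow$ counit, associativity of the action $\leftrightarrow$ multiplicativity of $\rho$).
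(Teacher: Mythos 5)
Your proposal is correct and takes essentially the same route as the paper: the same diagonal action $a \cdot (x \otimes m) = a_{(-1)} \btright x \otimes a_{(0)}m$ for (i)$\Rightarrow$(ii), and the same reconstruction formula $\rho(a) = a \cdot (1_B \otimes 1_A)$ for recovering the comodule algebra structure, with the equivalence to (iii) treated as formal bookkeeping in both cases. If anything, you give strictly more detail than the paper's terse proof --- in particular the naturality argument forcing the $A$-action on every $X \otimes M$, and the explicit matching of coassociativity/counitality of $\rho$ with $A$-linearity of the multiplicator/unitor --- while your organization into (i)$\Leftrightarrow$(ii) plus a formal (ii)$\Leftrightarrow$(iii) differs only cosmetically from the paper's cyclic chain (i)$\Rightarrow$(ii)$\Rightarrow$(iii)$\Rightarrow$(i).
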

	\begin{proof}
		\par{(i) \(\Rightarrow\) (ii).} If \(A\) is a left \(B\)-comodule algebra with coaction \[\rho : A \to B \otimes A,\ \rho(a) = a^{(-1)} \otimes a^{(0)},\] then for every \(B\)-module \(X\) and \(A\)-module \(M\), \(X \otimes M\) is again an \(A\)-module with action \\ \(a \cdot (x \otimes m) = a^{(-1)} \cdot x \otimes a^{(0)} \cdot m\) (the module axioms are satisfied because \(\rho(1_A) = 1_B \otimes 1_A\) and \(\rho(aa') = \rho(a)\rho(a')\) for all \(a, a' \in A\)). Hence the functor \({_B} \Mod \to [{_A}\Mod, {_A}\Mod],\  X \mapsto (X \otimes -)\) is well-defined and it is strong monoidal thanks to the comodule axioms satisfied by \(\rho\).
		
		\par{(ii) \(\Rightarrow\) (iii).} The functor \eqref{eq:BModaction} makes \({_A}\Mod\) a \({_B}\Mod\)-actegory, with the unitor and multiplicator coming from the strong monoidal structure. It is clear that the forgetful functor to \(\Vect_k\) is strict \({_B}\Mod\)-linear. 
		
		\par{(iii) \(\Rightarrow\) (i).} If \({_A}\Mod\) is a \({_B}\Mod\)-actegory such that the forgetful functor \({_A}\Mod \to \Vect_k\) is strict \({_B}\Mod\)-linear, then the associated strong monoidal functor \({_B} \Mod \to [{_A}\Mod, {_A}\Mod]\) is of the form \eqref{eq:BModaction}. In particular \(B \otimes A\) is an \(A\)-module, and \(\rho(a) \coloneqq a \cdot (1_B \otimes 1_A)\) defines a \(B\)-comodule algebra structure on \(A\).
	\end{proof}
	
	Of course there are similar statements for right actegories and biactegories, corresponding to right comodule algebras and bicomodule algebras, respectively. Similarly, a category of comodules over a coalgebra \(C\) is an actegory (for \(\otimes\)) over the category of comodules over a bialgebra \(B\) precisely if \(C\) is a \(B\)-module coalgebra.

	\section{The biactegory of partial modules}
	\label{se:biact}
	
	In \cite[Lemma 5.5]{ABVdilations}, it was shown that the tensor product \(M \otimes X\) of a partial module \(M\) and a global module \(X\) is again a partial module if endowed with the diagonal action
	\begin{equation}\label{actegory1}
		h \bullet (m \otimes x) = h_{(1)} \bullet m \otimes h_{(2)} \btright x.
	\end{equation}
	It turns out that also \(X \otimes M\) is a partial module for the diagonal action
	\begin{equation}\label{actegory2}
		h \bullet (x \otimes m) = h_{(1)} \btright x \otimes h_{(2)} \bullet m
	\end{equation}
	and that in this way \(\HPMod\) is a biactegory over \(\HMod\). We will prove this in detail by taking advantage of \cref{prop:reconstruction}.

	\begin{lemma}
		\label{le:bicomodule}
		Let \(H\) be a Hopf algebra. The partial Hopf algebra \(H_{par}\) is an \(H\)-bicomodule algebra, by means of 
		\begin{align}
			H_{par} \to H \otimes H_{par},\quad [h] \mapsto h_{(1)} \otimes [h_{(2)}], \label{eq:leftHcomodule}  \\
			H_{par} \to H_{par} \otimes H,\quad [h] \mapsto [h_{(1)}] \otimes h_{(2)}, \label{eq:rightHcomodule}
		\end{align}
		extended multiplicatively.
	\end{lemma}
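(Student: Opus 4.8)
The plan is to exploit the universal property of \(H_{par}\) recalled above: algebra maps \(H_{par} \to B\) are in bijection with partial representations \(H \to B\). The targets \(H \otimes H_{par}\) and \(H_{par} \otimes H\) carry their tensor-product algebra structures, and a comodule algebra structure is precisely a coaction that is an \emph{algebra} homomorphism; hence, to obtain the two coactions it suffices to check that the linear maps
\[\pi_L \colon H \to H \otimes H_{par},\ h \mapsto h_{(1)} \otimes [h_{(2)}], \qquad \pi_R \colon H \to H_{par} \otimes H,\ h \mapsto [h_{(1)}] \otimes h_{(2)},\]
are partial representations in the sense of \ref{PR1}--\ref{PR3}. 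The algebra maps they induce are then, by construction, the maps \eqref{eq:leftHcomodule} and \eqref{eq:rightHcomodule} extended multiplicatively.

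The heart of the argument is therefore the verification of \ref{PR1}--\ref{PR3} for \(\pi_L\) and \(\pi_R\). Axiom \ref{PR1} is immediate from \([1_H] = 1\) and \(\Delta(1_H) = 1_H \otimes 1_H\). For \ref{PR2} and \ref{PR3} I would proceed conceptually, writing \(\pi_L = (\id_H \boxtimes [-]) \circ \Delta\) and \(\pi_R = ([-] \boxtimes \id_H) \circ \Delta\), where \(\boxtimes\) is the outer tensor product of representations and \(\id_H \colon H \to H\) is the regular representation (a global, hence partial, representation). Two general facts then suffice: (i) the outer tensor product of a global and a partial representation is a partial representation of the tensor-product Hopf algebra, and (ii) precomposing a partial representation with a morphism of Hopf algebras — here the comultiplication \(\Delta \colon H \to H \otimes H\) — is again a partial representation. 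In the proof of (i) the \emph{global} leg absorbs the antipode contributions through \(\sum g_{(1)} S(g_{(2)}) = \varepsilon(g)\,1\), after which \ref{PR2} (respectively \ref{PR3}) applied to the partial leg closes the computation.

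The main obstacle is the Sweedler bookkeeping in (i), which is genuinely asymmetric between the two coactions. For \(\pi_L\) the two \(H_{par}\)-factors fall on adjacent Sweedler components, so \ref{PR2}/\ref{PR3} apply directly; for \(\pi_R\) the factors \([k_{(1)}]\) and \([S(k_{(4)})]\) are separated by the legs \(k_{(2)}, k_{(3)}\) lying in the \(H\)-slot, and one must first collapse \(k_{(2)} S(k_{(3)}) = \varepsilon(k_{(2)})\,1_H\) so that the surviving \(H_{par}\)-factors become adjacent and the partial-representation axioms can be invoked. (If one prefers to bypass \(\boxtimes\) entirely, this collapse can be performed directly in \(H_{par} \otimes H\).)

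Finally, the two comodule algebra structures must be combined into a bicomodule algebra structure: left and right coassociativity, counitality, and the compatibility \((\rho_L \otimes \id) \circ \rho_R = (\id \otimes \rho_R) \circ \rho_L\). No further relations of \(H_{par}\) intervene here, because all the maps involved (\(\rho_L, \rho_R, \Delta \otimes \id, \id \otimes \rho_L\), and so on) are algebra homomorphisms; each identity can thus be tested on the algebra generators \([h]\), where it reduces to coassociativity and counitality of \(H\). For example, \((\id \otimes \rho_R)\rho_L([h]) = h_{(1)} \otimes [h_{(2)}] \otimes h_{(3)} = (\rho_L \otimes \id)\rho_R([h])\) yields the bicomodule compatibility, completing the proof.
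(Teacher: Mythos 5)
Your overall skeleton --- reduce everything to the universal property of \(H_{par}\), verify the partial representation axioms for \(\pi_L\) and \(\pi_R\), then check counitality, coassociativity and the bicomodule compatibility on the generators \([h]\), where all maps involved are algebra morphisms --- is the same as the paper's. But the conceptual route you offer for the key step contains a genuine error: \(\Delta \colon H \to H \otimes H\) is \emph{not} a morphism of Hopf algebras. It is an algebra map, but it is a coalgebra map only when \(H\) is cocommutative, since
\[
\Delta_{H \otimes H}(\Delta(k)) = k_{(1)} \otimes k_{(3)} \otimes k_{(2)} \otimes k_{(4)}, \qquad (\Delta \otimes \Delta)(\Delta(k)) = k_{(1)} \otimes k_{(2)} \otimes k_{(3)} \otimes k_{(4)},
\]
and likewise \(S_{H \otimes H}(\Delta(k)) = S(k_{(1)}) \otimes S(k_{(2)})\) differs from \(\Delta(S(k)) = S(k_{(2)}) \otimes S(k_{(1)})\) in general. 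So fact (ii) cannot be invoked with \(\phi = \Delta\), and facts (i) and (ii) do not chain. Concretely: substituting \(Y = \Delta(k)\) into \ref{PR2} for the outer product representation produces the legs \(([k_{(1)}] \otimes k_{(3)})\,([S(k_{(2)})] \otimes S(k_{(4)}))\) dictated by \(\Delta_{H \otimes H}\), whereas \ref{PR2} for \(\pi_R\) concerns \(\pi_R(k_{(1)})\,\pi_R(S(k_{(2)})) = ([k_{(1)}] \otimes k_{(2)})\,([S(k_{(4)})] \otimes S(k_{(3)}))\), whose legs come from the fourfold comultiplication of \(H\). The discrepancy between these two patterns is precisely what is at stake, and it does not disappear formally when \(H\) is not cocommutative.

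What rescues your proposal is that the computation you relegate to a parenthetical ``bypass'' is in fact the proof, and it coincides with the paper's verification of \eqref{eq:rightHcomodule}: expand \(\pi_R(h)\pi_R(k_{(1)})\pi_R(S(k_{(2)})) = [h_{(1)}][k_{(1)}][S(k_{(4)})] \otimes h_{(2)}k_{(2)}S(k_{(3)})\), collapse \(k_{(2)}S(k_{(3)}) = \epsilon(k_{(2)})1_H\), apply \ref{PR2} in \(H_{par}\), and re-expand; similarly for \ref{PR3}. This must be promoted from a remark to the main argument. Note also that you misattribute the difficulty: the collapse is not part of the proof of fact (i), whose verification is unproblematic; it is the substitute for the broken composition with \(\Delta\). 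Two smaller points: your observation that for \(\pi_L\) the \(H_{par}\)-legs \([k_{(2)}][S(k_{(3)})]\) are adjacent, so that \ref{PR2} and \ref{PR3} apply directly by linearity, is correct --- the paper instead verifies \eqref{eq:leftHcomodule} via the equivalent axioms \ref{PR4} and \ref{PR5}, and either works by Saracco's Lemma; and your generator-level check of the bicomodule axioms matches the paper's.
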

	\begin{proof}	
		We start by showing that \eqref{eq:rightHcomodule} is well-defined. Consider the algebra \(H_{par} \otimes H\) and the map
		\[\pi : H \to H_{par} \otimes H, \quad h \mapsto [h_{(1)}] \otimes h_{(2)}.\]
		Then first of all \(\pi(1_H) = [1_H] \otimes 1_H = 1_{H_{par}} \otimes 1_H\). Therefore, the axiom \ref{PR1} for partial representations is verified. Moreover, for any \(h, k \in H,\)
		\begin{align*}
			\pi(h) \pi(k_{(1)}) \pi(S(k_{(2)})) &= [h_{(1)}][k_{(1)}][S(k_{(4)})] \otimes h_{(2)} k_{(2)} S(k_{(3)})
			= [h_{(1)}][k_{(1)}][S(k_{(2)})] \otimes h_{(2)} \\
			&= [h_{(1)}k_{(1)}] [S(k_{(2)})] \otimes h_{(2)}
			= [h_{(1)}k_{(1)}] [S(k_{(4)})] \otimes h_{(2)} k_{(2)} S(k_{(3)}) \\
			&= \pi(hk_{(1)}) \pi(S(k_{(2)})) .
		\end{align*}
		Hence, the axiom \ref{PR2} for partial representations is verified. Similarly, one can check the axiom \ref{PR3}
		\[\pi(h_{(1)}) \pi(S(h_{(2)})) \pi(k) = \pi(h_{(1)}) \pi(S(h_{(2)})k).\]
		By the universal property of the partial Hopf algebra $H_{par}$ (see Section \ref{se:partialprelim}), we obtain an algebra map \(H_{par} \to H_{par} \otimes H\) which is given by 
		\begin{equation}
			\label{eq:Hcoaction}
			[h] \mapsto [h_{(1)}] \otimes h_{(2)}.
		\end{equation}
		It is easy to check the counitality and coassociativity, so \(H_{par}\) is indeed a right \(H\)-comodule algebra. To show that \eqref{eq:leftHcomodule} is well-defined and endows \(H_{par}\) with a left comodule algebra structure, one needs to use the alternative axioms \ref{PR4} and \ref{PR5} for partial representations. The compatibility between the left and right comodule structure is clear, so \(H_{par}\) is an \(H\)-bicomodule algebra. 
	\end{proof}

	Combining the biactegory counterpart of \cref{prop:reconstruction} with \cref{le:bicomodule}, we obtain that \(\HPMod\) is an \(\HMod\)-biactegory. 
	
	\begin{theorem} 
		\label{cor:biactegory}
		Let \(H\) be a Hopf algebra. The monoidal structure on $\Vect_k$ lifts strictly to actions
		\begin{align}
			\otimes : {_H}\mathsf{Mod} \times {_H}\mathsf{PMod} \to {_H}\mathsf{PMod}, \label{eq:leftactegory} \\
			\otimes : {_H}\mathsf{PMod} \times {_H}\mathsf{Mod} \to {_H}\mathsf{PMod}, \label{eq:rightactegory} 			 
		\end{align}
		in the sense of \cref{def:actegory}, which make \(\HPMod\) a biactegory over \(\HMod\).
	\end{theorem}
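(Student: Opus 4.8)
The plan is to obtain the biactegory structure as a direct consequence of the bicomodule algebra structure established in \cref{le:bicomodule}, combined with the biactegory analogue of \cref{prop:reconstruction}. The key observation making this possible is that, by the universal property of $H_{par}$, the category $\HPMod$ is nothing but the category of modules ${}_{H_{par}}\Mod$; hence the reconstruction result applies verbatim with $B = H$ and $A = H_{par}$.

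First I would invoke \cref{le:bicomodule}, which exhibits $H_{par}$ as an $H$-bicomodule algebra with left coaction $[h] \mapsto h_{(1)} \otimes [h_{(2)}]$ and right coaction $[h] \mapsto [h_{(1)}] \otimes h_{(2)}$. Applying the bicomodule version of \cref{prop:reconstruction} (the statement sketched immediately after its proof) produces an $\HMod$-biactegory structure on ${}_{H_{par}}\Mod = \HPMod$ for which the forgetful functor to $\Vect_k$ is strict $\HMod$-linear on both sides. This strictness is exactly the claim that the monoidal structure on $\Vect_k$ lifts strictly to the two actions \eqref{eq:leftactegory} and \eqref{eq:rightactegory}.

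Next I would check that the two induced actions are precisely the diagonal ones. Following the implication (i) $\Rightarrow$ (ii) of \cref{prop:reconstruction}, the left $H$-coaction turns $X \otimes M$ (with $X \in \HMod$, $M \in \HPMod$) into an $H_{par}$-module via $[h] \cdot (x \otimes m) = h_{(1)} \btright x \otimes [h_{(2)}] \cdot m$, that is $h \bullet (x \otimes m) = h_{(1)} \btright x \otimes h_{(2)} \bullet m$, recovering \eqref{actegory2}; dually, the right $H$-coaction gives $h \bullet (m \otimes x) = h_{(1)} \bullet m \otimes h_{(2)} \btright x$ on $M \otimes X$, which is \eqref{actegory1}. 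So the left action is $\otimes : \HMod \times \HPMod \to \HPMod$ and the right action is $\otimes : \HPMod \times \HMod \to \HPMod$, as stated.

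The part requiring the most care --- and the only genuinely new point beyond having independent left and right actegory structures --- is the bimodulator and its coherence, since the biactegory analogue of \cref{prop:reconstruction} is only stated and not proved in the text. Here I would take $\zeta_{X,M,Y} : X \otimes (M \otimes Y) \to (X \otimes M) \otimes Y$ to be the associativity constraint of $\Vect_k$ and verify that it is $H_{par}$-linear; this linearity, together with the coherence of the bimodulator, reduces directly to the compatibility of the two $H$-coactions on $H_{par}$, i.e.\ the bicomodule axiom $(\id_H \otimes \rho_r) \circ \rho_\ell = (\rho_\ell \otimes \id_H) \circ \rho_r$ recorded in \cref{le:bicomodule}. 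Once the actions are identified with the diagonal ones, all the structure morphisms in sight are the canonical morphisms of $\Vect_k$, so the remaining unitor and multiplicator coherence axioms collapse to the pentagon and triangle of $\Vect_k$ and present no difficulty.
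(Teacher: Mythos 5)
Your proposal is correct and takes essentially the same route as the paper, which proves the theorem in exactly this way: it combines \cref{le:bicomodule} with the (stated but unproved) biactegory counterpart of \cref{prop:reconstruction}, using implicitly the identification \(\HPMod \cong {}_{H_{par}}\Mod\), and then records that the resulting actions are the diagonal ones \eqref{actegory1} and \eqref{actegory2}. Your explicit check that the bimodulator can be taken to be the associativity constraint of \(\Vect_k\), with its \(H_{par}\)-linearity reducing to the bicomodule compatibility axiom, fills in a detail the paper leaves unstated.
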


	For an \(H\)-module \(X\) and a partial \(H\)-module \(M\), the partial \(H\)-module structure on \(M \otimes X\) is given by \eqref{actegory1} and on \(X\otimes M\) by \eqref{actegory2}.

	\begin{remarks}  
		\begin{enumerate}[(i)]
			\item
			\cref{cor:biactegory} can be adapted to partial comodules (see \cite{ABCQVparcorep} for the relevant definitions). The monoidal product on $\Vect_k$ lifts to left and right actions of the monoidal category of right $H$-comodules $\mathsf{Mod}^H$ on the category of right partial $H$-comodules $\mathsf{PMod}^H$:
			\begin{align*}
				\otimes : \mathsf{Mod}^H \times \mathsf{PMod}^H \to \mathsf{PMod}^H, \\
				\otimes : \mathsf{PMod}^H \times \mathsf{Mod}^H \to \mathsf{PMod}^H.
			\end{align*}
			\item
			We know that \({_H}\mathsf{PMod}\) is monoidal for the monoidal product \(\otimes_{A_{par}}\) and that \({_H}\mathsf{Mod}\) is a full subcategory of \({_H}\mathsf{PMod}\). Moreover, the monoidal product \(\otimes_{A_{par}}\) restricted to \({_H}\mathsf{Mod}\) is exactly \(\otimes\). Indeed, if \(X\) and \(Y\) are global \(H\)-modules, then \(A_{par}\) acts trivially on them, i.\,e.\ 
			\[\varepsilon_h \cdot x \cdot \varepsilon_k = \epsilon(h)\epsilon(k) x\]
			for all \(h, k \in H, x \in X\). It follows that
			\[x \cdot \varepsilon_h \otimes y = x \epsilon(h) \otimes y = x \otimes \epsilon(h)y = x \otimes \varepsilon_h \cdot y\]
			and hence \(X \otimes_{A_{par}} Y \cong X \otimes Y\).
			
			However, the monoidal action \eqref{eq:rightactegory}
			does not coincide with the restriction of the monoidal product
			\[\otimes_{A_{par}} : {_H}\mathsf{PMod} \times {_H}\mathsf{PMod} \to {_H}\mathsf{PMod}.\]
			Indeed, recall that \(A_{par}\) is itself a left partial \(H\)-module with \(h \bullet a = [h_{(1)}] a [S(h_{(2)})]\) (see Section~\ref{se:partialprelim}) and that by applying the forgetful functor \({_{H_{par}}} \Mod \to {_{A_{par}}}\Mod_{A_{par}}\) (\ref{eq:leftAaction}, \ref{eq:rightAaction}), the regular \(A_{par}\)-bimodule is obtained. If we consider the trivial \(H\)-module \(\Bbbk\) for which \(h \btright 1 = \epsilon(h)\), then 
			\begin{align*}
				A_{par} \otimes_{A_{par}} \Bbbk \cong \Bbbk, \\
				A_{par} \otimes \Bbbk \cong A_{par}.
			\end{align*}
		\end{enumerate}
	\end{remarks}

	\section{Partial modules are enriched over global modules}
	
	In the previous section we showed that \(\HPMod\) is a biactegory over \(\HMod\). In particular, for any global \(H\)-module \(X\) there are functors
	\begin{align}
		X \otimes - &: \HPMod \to \HPMod, \label{eq:X-} \\
		- \otimes X &: \HPMod \to \HPMod, \label{eq:-X}
	\end{align}
	and for any partial \(H\)-module \(M\) there are functors
	\begin{align}
		- \otimes M &: \HMod \to \HPMod, \label{eq:-M}\\
		M \otimes - &: \HMod \to \HPMod. \label{eq:M-}
	\end{align}
	We will show that each of these has a right adjoint. For \eqref{eq:X-} and \eqref{eq:-X}, this is just the closed structure of \(\HMod\) lifted to an adjunction on \(\HPMod\). The adjoints of \eqref{eq:-M} and \eqref{eq:M-} provide enrichments of \(\HPMod\) over \(\HMod,\) as developed in the setting of tensor categories in \cite[\S 7.9]{EGNObook}.
	
	\begin{lemma}
		Let \(X\) be a global \(H\)-module and \(M\) a partial \(H\)-module. 
		\begin{enumerate}[(i)]
			\item Let \(\Hom_k^r(X, M) = \Hom_k(X, M)\) as vector spaces, and put 
			\begin{equation}
				(h \cdot f)(x) = h_{(1)} \bullet f(S(h_{(2)}) \btright x)
			\end{equation}
			for any \(h \in H, f \in \Hom_k^r(X, M)\) and \(x \in X\). Then \(\Hom_k^r(X, M)\) is a partial \(H\)-module.
			\item Let \(\Hom_k^\ell(X, M) = \Hom_k(X, M)\) as vector spaces, and put \begin{equation}
				(h \cdot f)(x) = h_{(2)} \bullet f(S^{-1}(h_{(1)}) \btright x)
			\end{equation}
			for any \(h \in H, f \in \Hom_k^\ell(X, M)\) and \(x \in X\). Then \(\Hom_k^\ell(X, M)\) is a partial \(H\)-module.
		\end{enumerate}
	\end{lemma}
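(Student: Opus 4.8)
The plan is to show that the assignment $\rho : H \to \End_k(\Hom_k^r(X, M))$, $\rho(h)(f) = h \cdot f$, is a partial representation, i.e.\ that it satisfies \ref{PR1}, \ref{PR2} and \ref{PR3}; this is exactly what makes $\Hom_k^r(X, M)$ a partial $H$-module in part (i). Axiom \ref{PR1} is immediate, since $1_H \bullet m = m$ and $S(1_H) = 1_H$ give $(1_H \cdot f)(x) = 1_H \bullet f(x) = f(x)$, so $\rho(1_H) = \id$. Before grinding through \ref{PR2} and \ref{PR3}, it is worth recording the conceptual reason behind the formula: when $X$ is finite-dimensional, the vector space isomorphism $\Hom_k(X, M) \cong M \otimes X^*$, where $X^*$ carries the left $H$-module structure $(h \btright \xi)(x) = \xi(S(h) \btright x)$, transports the diagonal partial action \eqref{actegory1} on $M \otimes X^*$ precisely onto the action defining $\Hom_k^r(X, M)$. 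Thus for finite-dimensional $X$ the statement already follows from \cite[Lemma 5.5]{ABVdilations}; the computation I carry out below handles arbitrary $X$.

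The key preliminary is the composition rule for $\rho$. Using that $X$ is a genuine $H$-module and that $S$ is anti-multiplicative, one finds for $a, b \in H$ that
\[
(\rho(a)\rho(b)f)(x) = a_{(1)} \bullet \bigl(b_{(1)} \bullet f(S(a_{(2)}b_{(2)}) \btright x)\bigr),
\]
the point being that the two nested antipodes merge into a single factor because $S(b_{(2)}) \btright (S(a_{(2)}) \btright x) = S(a_{(2)}b_{(2)}) \btright x$. Iterating this and specialising to $a = h$, $b = k_{(1)}$, $c = S(k_{(2)})$, one expands both sides of \ref{PR2} over the fourfold coproduct of $k$; the crucial simplification is the antipode identity $\sum k_{(2)} S(k_{(3)}) = \epsilon(-)1$, which collapses the two middle legs and brings the remaining legs into adjacency, so that the two sides become
\[
h_{(1)} \bullet \bigl(k_{(1)} \bullet (S(k_{(2)}) \bullet m)\bigr) \quad \text{versus} \quad (h_{(1)}k_{(1)}) \bullet (S(k_{(2)}) \bullet m),
\]
with common element $m = f(S(h_{(2)}) \btright x)$. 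These agree by axiom \ref{PR2} for $M$ itself, applied with parameter $h_{(1)}$ and summed over the coproduct of $h$. Axiom \ref{PR3} is verified in the same manner, now invoking \ref{PR3} for $M$.

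The main obstacle is purely the Sweedler bookkeeping: because $S$ reverses the order of both the product and the coproduct, I must track carefully which legs of the iterated coproducts of $h$ and $k$ land inside the evaluation $f(- \btright x)$ and which remain in the partial action on $M$, and then recognise the outcome as an instance of the defining axioms of $M$. Part (ii) is entirely analogous: the antipode is replaced throughout by its inverse $S^{-1}$ (which is anti-multiplicative as well), the merging identity becomes $S^{-1}(b_{(1)}) \btright (S^{-1}(a_{(1)}) \btright x) = S^{-1}(a_{(1)}b_{(1)}) \btright x$, and the reduction again lands on \ref{PR2} and \ref{PR3} for $M$ (here the relevant legs are already adjacent, so no collapse is needed). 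Equivalently, one may observe that $\Hom_k^\ell(X, M)$ arises from the right action \eqref{actegory2} via the $S^{-1}$-dual of $X$ in the same way that $\Hom_k^r(X, M)$ arises from \eqref{actegory1} via $X^*$.
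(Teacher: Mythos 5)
Your proof is correct, and for part (i) it is essentially the paper's argument in expanded form: the paper's proof rests on the single identity \((h_{(1)} \cdot (S(h_{(2)}) \cdot f))(x) = h_{(1)} \bullet (S(h_{(2)}) \bullet f(x))\), which is precisely your composition rule \((\rho(a)\rho(b)f)(x) = a_{(1)} \bullet (b_{(1)} \bullet f(S(a_{(2)}b_{(2)}) \btright x))\) followed by the collapse \(k_{(2)}S(k_{(3)}) = \epsilon(k_{(2)})1_H\), after which \ref{PR2} and \ref{PR3} reduce to the corresponding axioms of \(M\) exactly as you describe. The one genuine divergence is in part (ii). There the paper verifies the alternative axioms \ref{PR4} and \ref{PR5}, because for the \(S^{-1}\)-twisted action it is the combinations \(\pi(S(h_{(1)}))\pi(h_{(2)})\) whose antipode legs collapse (this is the content of the paper's identity \((S(h_{(1)}) \cdot (h_{(2)} \cdot f))(x) = S(h_{(1)}) \bullet (h_{(2)} \bullet f(x))\)); this route relies on Saracco's Lemma, i.e.\ on the equivalence of \ref{PR1}, \ref{PR4}, \ref{PR5} with \ref{PR1}, \ref{PR2}, \ref{PR3}. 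You instead verify \ref{PR2} and \ref{PR3} directly, and this works for exactly the reason you state: for the \(\ell\)-action the legs of \(k\) to which \ref{PR2} for \(M\) must be applied are the two middle (adjacent) ones, while the outer legs \(k_{(1)}\) and \(S(k_{(4)})\) can be carried along inside the argument of \(f\), the axiom being multilinear. So your treatment of (ii) avoids Saracco's Lemma altogether, at the cost of breaking the formal symmetry between (i) and (ii) that the paper's choice of axioms preserves. Finally, your remark that for finite-dimensional \(X\) the lemma already follows from \cite[Lemma 5.5]{ABVdilations} via the partial-module isomorphism \(\Hom_k(X, M) \cong M \otimes X^*\) (with the \(S\)-twisted, resp.\ \(S^{-1}\)-twisted, dual action) is correct and is a nice conceptual complement not present in the paper.
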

	\begin{proof}
		For any \(h \in H, f \in \Hom_k^r(X, M)\) and \(x \in X\), 
		\[(h_{(1)} \cdot (S(h_{(2)}) \cdot f))(x) = h_{(1)} \bullet (S(h_{(2)}) \bullet f(x)),\]
		so it is easy to see that \ref{PR1}, \ref{PR2} and \ref{PR3} are satisfied. On the other hand, if \(f \in \Hom_k^\ell(X, M)\), then
		\[(S(h_{(1)}) \cdot (h_{(2)} \cdot f))(x) = S(h_{(1)}) \bullet (h_{(2)} \bullet f(x)),\]
		so \ref{PR1}, \ref{PR4} and \ref{PR5} hold.
	\end{proof}

	\begin{proposition}
		For any global \(H\)-module \(X,\)   we have the following adjunctions:
		\begin{equation}\label{eq:adjunctionX-}
			\xymatrix{\HPMod \ar@/^/[rr]^-{X\otimes -}_-{\perp} & & \HPMod \ar@/^/[ll]^-{\Hom_k^{\ell} (X, -)} } 
		\end{equation}
		\begin{equation}\label{eq:adjunction-X}
			\xymatrix{\HPMod \ar@/^/[rr]^-{-\otimes X}_-{\perp} & & \HPMod \ar@/^/[ll]^-{\Hom_k^r (X, -)} } 
		\end{equation}
	\end{proposition}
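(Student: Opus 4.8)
The plan is to deduce both adjunctions from the ordinary tensor--hom adjunction on $\Vect_k$, by checking that its underlying natural bijection of vector spaces restricts, in both directions, to a bijection between the subsets of $\HPMod$-morphisms. Concretely, for \eqref{eq:adjunction-X} I would use the standard isomorphism
\[
\Hom_k(M \otimes X, N) \xrightarrow{\ \cong\ } \Hom_k\bigl(M, \Hom_k(X, N)\bigr), \qquad g \mapsto \widetilde g, \quad \widetilde g(m)(x) = g(m \otimes x),
\]
natural in $M$ and $N$, and for \eqref{eq:adjunctionX-} the analogous map $g \mapsto \widetilde g$ with $\widetilde g(m)(x) = g(x \otimes m)$ coming from $\Hom_k(X \otimes M, N) \cong \Hom_k(M, \Hom_k(X, N))$. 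Once these bijections are shown to send partial morphisms to partial morphisms and back, naturality is inherited from the vector space level and the adjunctions follow formally; functoriality of $\Hom_k^\ell(X, -)$ and $\Hom_k^r(X, -)$ on $\HPMod$ (by post-composition) is an immediate check against the actions defined in the previous lemma.

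First I would treat the right adjunction \eqref{eq:adjunction-X}. Recalling from \cref{cor:biactegory} that $M \otimes X$ carries $h \bullet (m \otimes x) = h_{(1)} \bullet m \otimes h_{(2)} \btright x$ and that $\Hom_k^r(X, N)$ carries $(h \cdot f)(x) = h_{(1)} \bullet f(S(h_{(2)}) \btright x)$, unravelling the defining action of $\Hom_k^r$ shows immediately that $\widetilde g$ is a morphism of partial modules exactly when the \emph{mixed identity}
\[
g(h \bullet m \otimes x) = h_{(1)} \bullet g\bigl(m \otimes S(h_{(2)}) \btright x\bigr)
\]
holds for all $h, m, x$. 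It then remains to see that this mixed identity is equivalent to genuine $H$-equivariance of $g$, i.e.\ to $g(h_{(1)} \bullet m \otimes h_{(2)} \btright x) = h \bullet g(m \otimes x)$. One direction is obtained by substituting $h_{(1)}$ for the acting element and $S(h_{(2)}) \btright x$ for $x$ in the equivariance identity and collapsing $h_{(2)} S(h_{(3)})$ via $\sum h_{(1)} S(h_{(2)}) = \epsilon(h)1$, together with coassociativity of $\Delta$ and the module axioms of $X$; the converse is the reverse substitution, collapsing $S(h_{(2)}) h_{(3)}$ via $\sum S(h_{(1)}) h_{(2)} = \epsilon(h)1$. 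For the left adjunction \eqref{eq:adjunctionX-} the computation is identical in shape, now using $h \bullet (x \otimes m) = h_{(1)} \btright x \otimes h_{(2)} \bullet m$ on $X \otimes M$ and $(h \cdot f)(x) = h_{(2)} \bullet f(S^{-1}(h_{(1)}) \btright x)$ on $\Hom_k^\ell(X, N)$, and invoking the dual antipode identity $\sum h_{(2)} S^{-1}(h_{(1)}) = \epsilon(h)1$ (valid since $S$ is bijective).

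The main point to get right is the Sweedler bookkeeping in these substitutions, and in particular verifying that the correspondence is an honest \emph{iff}: one must show that the mixed identity is equivalent to full $H$-equivariance on each side, not merely implied by it, which is exactly where the antipode axioms and coassociativity enter essentially. I expect no genuine obstruction beyond this bookkeeping, because the partial action $\bullet$ on $M$ and $N$ is never split or differentiated in the argument---only the \emph{global} structures on $X$ and on $H$ are manipulated---so the failure of $\bullet$ to be unital or associative in the naive sense plays no role. With the hom-set bijections shown to restrict and their naturality inherited, the two adjunctions are established.
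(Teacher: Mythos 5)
Your proof is correct, but it packages the argument differently from the paper. Both proofs amount to lifting the classical tensor--hom adjunction from \(\Vect_k\) to \(\HPMod\): the paper does this by writing down the unit \(\eta_M : M \to \Hom_k^\ell(X, X \otimes M)\), \(m \mapsto (x \mapsto x \otimes m)\), and counit \(\epsilon_M : X \otimes \Hom_k^\ell(X, M) \to M\), \(x \otimes f \mapsto f(x)\), of the vector-space adjunction and checking by a short Sweedler computation that these two specific maps are morphisms of partial modules (triangle identities and naturality being inherited from \(\Vect_k\)), whereas you check that the vector-space hom-set bijection \(g \mapsto \widetilde{g}\) restricts in both directions to \(\HPMod\)-morphisms, via the equivalence of your ``mixed identity'' with \(H_{par}\)-equivariance. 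The computational content is essentially the same, just transposed: your direction ``mixed identity \(\Rightarrow\) equivariance'' for \eqref{eq:adjunctionX-}, which collapses \(S^{-1}(h_{(2)})h_{(1)}\), is exactly the paper's verification that \(\epsilon_M\) is a partial-module map, and your converse direction, collapsing \(h_{(2)}S^{-1}(h_{(1)})\), is the paper's verification for \(\eta_M\). The unit/counit route is slightly more economical (two maps to check, bijectivity of the hom correspondence automatic); your route yields an explicit iff-characterization of which linear maps transpose to partial-module maps, at the cost of having to verify both directions of the restriction---which you do correctly, and which makes the lifting mechanism more transparent. One cosmetic remark: for \eqref{eq:adjunctionX-} you invoke only \(h_{(2)}S^{-1}(h_{(1)}) = \epsilon(h)1_H\), but the converse substitution also needs \(S^{-1}(h_{(2)})h_{(1)} = \epsilon(h)1_H\); both are standard consequences of applying \(S^{-1}\) to the two antipode axioms, so nothing is at stake.
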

	\begin{proof}
		Let us show that the unit and counit of \eqref{eq:adjunctionX-},
		\begin{align*}
			\eta_M &: M \to \Hom_k^\ell(X, X \otimes M), \quad m \mapsto (x \mapsto (x \otimes m)), \\
			\epsilon_M &: X \otimes \Hom_k^\ell(X, M) \to M, \quad x \otimes f \mapsto f(x),
		\end{align*}
		are morphisms of partial modules. We have indeed
		\begin{align*}
			(h \cdot \eta_M(m))(x) &= h_{(2)} \bullet \eta_M(m)(S^{-1}(h_{(1)}) \btright x) = h_{(2)} \bullet (S^{-1}(h_{(1)}) \btright x \otimes m) \\
			&= h_{(2)}S^{-1}(h_{(1)}) \btright x \otimes h_{(3)} \bullet m = x \otimes h \bullet m; \\
			\epsilon_M(h \bullet (x \otimes f)) &= \epsilon_M(h_{(1)} \btright x \otimes h_{(2)} \cdot f) = (h_{(2)} \cdot f)(h_{(1)} \btright x) \\
			&= h_{(3)} \bullet f(S^{-1}(h_{(2)}) h_{(1)} \btright x) = h \bullet f(x).
		\end{align*}
		The proof for \eqref{eq:adjunction-X} is similar. 
	\end{proof}
	
	We turn now to the functors \(- \otimes M\) \eqref{eq:-M} and \(M \otimes -\) \eqref{eq:M-} for a partial \(H\)-module \(M\). 
	As explained in \cite[Section 2]{JKactegories}, if \((\Mm, \triangleright)\) is a left \(\Cc\)-actegory, then the right adjoint \(R_M\) to the functor \(- \triangleright M : \Cc \to \Mm\) makes \(\Mm\) a \(\Cc\)-enriched category, by defining the Hom-object from \(M\) to \(N\) as \(R_M(N)\).\footnote{In \cite[Section 7.9]{EGNObook}, a Hom-object arising from such an enrichment is called an \textit{internal Hom}. We prefer to avoid this terminology, since the Hom-object, being a global module, is not internal to the category of partial modules.}
	
	Notice that finding a right adjoint to our functor \(- \otimes M\) \eqref{eq:-M} amounts to constructing an object \([M, N]\) for any partial module \(N\) such that for all global modules \(X\)
	\[\Hom_{\HPMod}(X \otimes M, N) \cong {_H}\Hom(X, [M, N]).\]
	This can be done using the classical Hom-tensor adjunction. 
	The details are worked out in the next proposition. 
	
	\begin{proposition}
		Let \(M, N\) be partial \(H\)-modules and denote
		\begin{align*}
			[M, N] &= \Hom_{\HPMod}(H \otimes M, N), \\
			\{M, N\} &= \Hom_{\HPMod}(M \otimes H, N).
		\end{align*}
		We have the following adjunctions:
		\begin{equation}\label{eq:adjunction-M}
			\xymatrix{\HMod \ar@/^/[rr]^-{-\otimes M}_-{\perp} & & \HPMod \ar@/^/[ll]^-{[M, -] } }
		\end{equation}
		\begin{equation}\label{eq:adjunctionM-}
			\xymatrix{\HMod \ar@/^/[rr]^-{M\otimes -}_-{\perp} & & \HPMod \ar@/^/[ll]^-{\{M, -\}} } 
		\end{equation}
	\end{proposition}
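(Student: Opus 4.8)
The plan is to prove \eqref{eq:adjunction-M} by producing a bijection
\[
\Hom_{\HPMod}(X \otimes M, N) \;\cong\; {}_H\Hom(X, [M,N])
\]
natural in the global module $X$ and the partial module $N$, and then to obtain \eqref{eq:adjunctionM-} by a symmetric argument. The candidate $[M,N] = \Hom_{\HPMod}(H \otimes M, N)$ is in fact forced: evaluating the desired bijection at $X = H$ and using that ${}_H\Hom(H,-) \cong \id$ gives $[M,N] \cong \Hom_{\HPMod}(H \otimes M, N)$, so the only real work is to make this space into a global $H$-module and to verify the bijection for general $X$.

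First I would equip $[M,N] = \Hom_{\HPMod}(H \otimes M, N)$ with the $H$-module structure induced by the right regular action of $H$ on itself, i.e.\ the $\rightharpoonup$-structure of the preliminaries,
\[
(h \rightharpoonup \phi)(k \otimes m) = \phi(kh \otimes m).
\]
The one substantive point is that $h \rightharpoonup \phi$ is again a morphism in $\HPMod$: expanding $(h \rightharpoonup \phi)(g \bullet (k \otimes m))$ using the action \eqref{actegory2} on $H \otimes M$ and the fact that $\phi$ intertwines the partial actions, one checks it equals $g \bullet (h \rightharpoonup \phi)(k \otimes m)$. That this is a left action and is functorial in $N$ is then immediate.

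Next I would write down the comparison via currying. Given a morphism $\psi : X \otimes M \to N$ in $\HPMod$, set $\Psi(x)(h \otimes m) = \psi\big((h \btright x) \otimes m\big)$; conversely, given an $H$-linear map $\Phi : X \to [M,N]$, set $\phi(x \otimes m) = \Phi(x)(1_H \otimes m)$. Three short verifications are needed: that $\Psi(x)$ is $\HPMod$-linear (this uses \eqref{actegory2} on both $H \otimes M$ and $X \otimes M$ together with $(g_{(1)}h) \btright x = g_{(1)} \btright (h \btright x)$); that $\Psi$ is $H$-linear (immediate from $\Psi(h \btright x)(k \otimes m) = \psi((kh)\btright x \otimes m) = (h \rightharpoonup \Psi(x))(k \otimes m)$); and that $\phi$ is $\HPMod$-linear (using $H$-linearity of $\Phi$ together with $g \bullet (1_H \otimes m) = g_{(1)} \otimes g_{(2)} \bullet m$). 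The two constructions are mutually inverse: re-currying $\phi$ recovers $\Phi$ because $\phi((h \btright x) \otimes m) = \Phi(h \btright x)(1_H \otimes m) = \Phi(x)(h \otimes m)$ by $H$-linearity, while $\Psi(x)(1_H \otimes m) = \psi(x \otimes m)$ recovers $\psi$. Naturality in $X$ and $N$ is routine.

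The second adjunction \eqref{eq:adjunctionM-} is proved in the same way, now using the action \eqref{actegory1} throughout: on $\{M,N\} = \Hom_{\HPMod}(M \otimes H, N)$ the $H$-action is $(h \rightharpoonup \phi)(m \otimes k) = \phi(m \otimes kh)$, the comparison maps are $\Psi(x)(m \otimes h) = \psi(m \otimes (h \btright x))$ and $\phi(m \otimes x) = \Phi(x)(m \otimes 1_H)$, and the verifications are identical mutatis mutandis. I expect the only step requiring genuine care, rather than bookkeeping, to be the well-definedness of these $H$-actions on $[M,N]$ and $\{M,N\}$, namely that right-translating a partial-module morphism in the $H$-slot preserves $\HPMod$-linearity; everything else is the classical Hom--tensor adjunction carried out one variable at a time.
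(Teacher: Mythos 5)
Your proof is correct and follows essentially the same route as the paper: the Hom-objects, the \(H\)-module structure given by right translation in the \(H\)-slot, and your currying bijection \(\Psi\) (with inverse given by evaluation at \(1_H\)) are exactly the paper's maps \(\psi_X\) and \(\psi_X^{-1}\) from \eqref{eq:correspondence_righthom}. The only difference is one of packaging: the paper makes \(H \otimes M\) (resp.\ \(M \otimes H\)) into an \((H_{par}, H)\)-bimodule, identifies \(X \otimes M \cong (H \otimes M) \otimes_H X\), and then cites the classical tensor--hom adjunction so that well-definedness, \(H\)-linearity, bijectivity and naturality come for free, whereas you verify these compatibilities by hand --- your ``one substantive point'' (that right translation in the \(H\)-slot preserves \(\HPMod\)-linearity because it commutes with the diagonal partial action) is precisely the paper's implicit claim that the left \(H_{par}\)-action and the right \(H\)-action on \(H \otimes M\) commute, i.e.\ the bimodule condition.
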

	\begin{proof}
		We start with \eqref{eq:adjunctionM-}. For a partial module \(M\), consider the \((H_{par}, H)\)-bimodule \(M \otimes H,\) where the module structures are given by 
		\[h \bullet (m \otimes k) \btleft \ell = h_{(1)} \bullet m \otimes h_{(2)} k \ell\]
		for \(h, k, \ell \in H\) and \(m \in M\). Then \(M \otimes X \cong (M \otimes H) \otimes_H X\) for any global module \(X\) via the map \(m \otimes x \mapsto (m \otimes 1_H) \otimes_H x\). Now 
		\[{_{H_{par}}}\Hom(M \otimes X, N) \cong {_{H_{par}}}\Hom((M \otimes H) \otimes_H X, N) \cong {_H}\Hom(X, {_{H_{par}}}\Hom(M \otimes H, N)),\]
		where the second isomorphism is the tensor-hom adjunction. The \(H\)-module structure on \({_{H_{par}}}\Hom(M \otimes H, N) \eqqcolon \{M, N\}\) is induced by the right \(H\)-module structure on \(M \otimes H,\) i.\,e.\ 
		\begin{equation}
			\label{eq:Hmod_righthom}
			(h' \rightharpoonup f)(m \otimes h) = f(m \otimes hh')
		\end{equation}
		for all \(f \in \{M, N\}\) and \(h, h' \in H\). The natural isomorphisms
		\begin{equation}
			\label{eq:correspondence_righthom}
			\psi_X : \Hom_{\HPMod}(M \otimes X, N) \to \Hom_{\HMod}(X, \{M, N\})
		\end{equation}
		are given by \(\psi_X(f)(x)(m \otimes h) = f(m \otimes h \btright x)\) with inverse \(\psi^{-1}(g)(m \otimes x) = g(x)(m \otimes 1_H)\). The adjunction \eqref{eq:adjunction-M} is obtained in a similar way, by considering the \((H_{par}, H)\)-bimodule \(H \otimes M\) with module structures 
		\[h \bullet (k \otimes m) \btleft \ell = h_{(1)}k \ell \otimes h_{(2)} \bullet m.\] Then
		\(X \otimes M \cong (H \otimes M) \otimes_H X\) via the map \(x \otimes m \mapsto (1_H \otimes m) \otimes_H x\) and
		\[{_{H_{par}}}\Hom(X \otimes M, N) \cong {_{H_{par}}}\Hom((H \otimes M) \otimes_H X, N) \cong {_H}\Hom(X, {_{H_{par}}}\Hom(H \otimes M, N)). \qedhere\]
	\end{proof}

	\begin{theorem}\label{enrichment}
		\begin{enumerate}[(i)]
			\item The category of partial modules \(\HPMod\) is enriched over \(\HMod\) if the Hom-object between partial modules \(M\) and \(N\) is taken to be
			\([M, N]\).
			\item Let \(\HMod^{\mathrm{rev}}\) the monoidal category of \(H\)-modules with reversed tensor product. Then \(\HPMod\) is enriched over \(\HMod^{\mathrm{rev}}\), if the hom-object between partial modules \(M\) and \(N\) is taken to be
			\(\{M, N\}.\)
		\end{enumerate}
	\end{theorem}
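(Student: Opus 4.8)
The plan is to read off both enrichments directly from the adjunctions established in the previous proposition, invoking the general principle recalled above (from \cite[Section 2]{JKactegories}): on a left \(\Cc\)-actegory \(\Mm\), a choice of right adjoint \(R_M\) to each functor \(-\triangleright M : \Cc \to \Mm\) promotes the underlying category of \(\Mm\) to a \(\Cc\)-enriched category whose Hom-object from \(M\) to \(N\) is \(R_M(N)\), with composition and units assembled from the multiplicator and unitor of the action. Since the existence of the adjoints is the only non-formal ingredient and has already been proved, what remains is to match each adjunction to the correct actegory structure of \cref{cor:biactegory}, taking care of handedness.

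For (i) I would view \(\HPMod\) as a left \(\HMod\)-actegory via the action \eqref{eq:leftactegory}, so that \(X \triangleright M = X \otimes M\). For a fixed partial module \(M\), the functor \(-\triangleright M\) is exactly \(-\otimes M\) of \eqref{eq:-M}, and its right adjoint \([M,-]\) is precisely the one produced in \eqref{eq:adjunction-M}. The enrichment principle then applies word for word and yields a \(\HMod\)-enrichment of \(\HPMod\) with Hom-object \(R_M(N) = [M,N]\).

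For (ii) the key observation is that the right action \eqref{eq:rightactegory} is the same datum as a left \(\HMod^{\mathrm{rev}}\)-actegory structure on \(\HPMod\): reversing the tensor product of \(\HMod\) turns the right action \(M \tleft X = M \otimes X\) into a left action of \(\HMod^{\mathrm{rev}}\). Under this reinterpretation \(-\triangleright M\) becomes the functor \(M \otimes -\) of \eqref{eq:M-}, whose right adjoint \(\{M,-\}\) is the one exhibited in \eqref{eq:adjunctionM-}; applying the enrichment principle to the left \(\HMod^{\mathrm{rev}}\)-actegory \(\HPMod\) produces an enrichment over \(\HMod^{\mathrm{rev}}\) with Hom-object \(\{M,N\}\).

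The step I would watch most carefully is exactly this reversal in (ii): one must be sure that rewriting the right action as a left \(\HMod^{\mathrm{rev}}\)-action carries the composition and identity morphisms of the induced enrichment to those built from the multiplicator and unitor of the original right action, so that the enriched-category axioms are verified in \(\HMod^{\mathrm{rev}}\) rather than in \(\HMod\). Because the cited result holds over an arbitrary monoidal base, this is automatic, and no computation beyond the adjunctions of the previous proposition is needed; the apparent difficulty is purely one of bookkeeping.
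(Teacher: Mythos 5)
Your proposal is correct and takes essentially the same approach as the paper: both derive the two enrichments from the adjunctions \eqref{eq:adjunction-M} and \eqref{eq:adjunctionM-} via the general principle of \cite[Section 2]{JKactegories}, with the same handedness bookkeeping identifying \(\{-,-\}\) as the Hom-object for the \(\HMod^{\mathrm{rev}}\)-enrichment. The only difference is that the paper's proof additionally spells out the resulting evaluation, composition and unit morphisms explicitly (e.g.\ \((\varphi \circ_{\{MNP\}} \chi)(m \otimes h) = \chi(\varphi(m \otimes h_{(1)}) \otimes h_{(2)})\)), which your argument leaves implicit in the adjunction data.
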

	\begin{proof}

		We spell out the evaluation and composition morphisms of \(\{-, -\}\). 
		The evaluation map is found by taking \(X = \{M, N\}\) and \(g = \mathrm{id}_{\{M, N\}}\) in the correspondence \eqref{eq:correspondence_righthom}, giving
		\[e_{\{MN\}} : \{M, N\} \otimes^{\mathrm{rev}} M = M \otimes \{M, N\} \to N, \quad m \otimes \varphi \mapsto \varphi(m \otimes 1_H).\]
		Remark that \(\epsilon_N = e_{\{MN\}}\) is in fact the counit of the adjunction \(M \otimes - \dashv \{M, -\}\). 
		
		To find the composition law, consider first
		\[e_{\{NP\}}(e_{\{MN\}} \otimes \{N, P\}) : M \otimes \{M, N\} \otimes \{N, P\} \to P, \quad m \otimes \varphi \otimes \chi \mapsto \chi(\varphi(m \otimes 1_H) \otimes 1_H).\]
		Applying \(\psi_{\{M, N\} \otimes \{N, P\}}\) yields the composition morphism
		\begin{align}
			\circ_{\{MNP\}} : \{N, P\} \otimes^{\mathrm{rev}} \{M, N\} &= \{M, N\} \otimes \{N, P\} \to \{M, P\}, \label{eq:compositionright} \\ (\varphi \circ_{\{MNP\}} \chi)(m \otimes h) &= (h_{(2)} \rightharpoonup \chi)((h_{(1)} \rightharpoonup \varphi)(m \otimes 1_H) \otimes 1_H) = \chi(\varphi(m \otimes h_{(1)}) \otimes h_{(2)}). \nonumber
		\end{align}
		The unit of \(\{M, M\}\) is \(\psi_k(\Id_M)(1_k),\) which is the morphism \(m \otimes h \mapsto \epsilon(h) m\).
		The evaluation, composition and unit morphisms for \([-, -]\) are computed in a similar way.
	\end{proof}
	
	So far, we have discussed the ``local'' right adjoints to the action of \(\HMod\) on \(\HPMod\): the four functors \eqref{eq:X-}, \eqref{eq:-X}, \eqref{eq:-M} and \eqref{eq:M-} have adjoints for any global module \(X\) and partial module \(M\), and provide an enrichment of \(\HPMod\) over \(\HMod\). Recall that the left \(\HMod\)-actegory \eqref{eq:leftactegory} corresponds to a strong monoidal functor \(\HMod \to [\HPMod, \HPMod]\). As explained in \cite{JKactegories}, it is interesting to know whether this ``global'' action functor has a right adjoint, because it would automatically provide a monoidal adjunction between \(\HMod\) and \([\HPMod, \HPMod]\), which on its turn gives an adjunction between \(H\)-module algebras and monads on \(\HPMod\). Since our acting category \(\HMod\) is closed monoidal, \cite[Theorem 3.1]{JKactegories} can be applied; it states that for the action functor \(\HMod \to [\HPMod, \HPMod]\) to have a right adjoint, it suffices that each of the functors \eqref{eq:X-}-\eqref{eq:M-} has a right adjoint. 
	
	\begin{corollary}
		The strong monoidal functors 
		\begin{align}
			\HMod &\to [\HPMod, \HPMod], \quad X \mapsto (X \otimes -), \label{eq:action_left}\\
			\HMod &\to [\HPMod, \HPMod], \quad X \mapsto (- \otimes X). \label{eq:action_right}
		\end{align}
		have a right adjoint.
	\end{corollary}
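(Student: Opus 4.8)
The plan is to apply \cite[Theorem 3.1]{JKactegories} essentially verbatim, so that no genuinely new computation is required: all of the real work has already been carried out in the preceding propositions. That theorem asserts that, when the acting monoidal category is closed, the \emph{global} action functor of a biactegory admits a right adjoint as soon as each of the four \emph{local} functors—obtained by fixing one of the two arguments of the action—admits a right adjoint. Our task therefore reduces to verifying the two standing hypotheses of the theorem for each of the action functors \eqref{eq:action_left} and \eqref{eq:action_right}.

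First I would record that the acting category \(\HMod\) is closed monoidal. This is the standard fact that the module category of a Hopf algebra is closed: the internal hom of two \(H\)-modules \(X\) and \(Y\) is \(\Hom_k(X, Y)\) equipped with the action \((h \rightharpoonup f)(x) = h_{(1)} \btright f(S(h_{(2)}) \btright x)\), and the invertibility of the antipode \(S\) (assumed throughout) guarantees the analogous structure on the reversed side. Hence \cite[Theorem 3.1]{JKactegories} applies equally to the left action \eqref{eq:action_left} and to the right action \eqref{eq:action_right}.

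Next I would simply collect the four local right adjoints constructed above. The functors \(X \otimes -\) \eqref{eq:X-} and \(- \otimes X\) \eqref{eq:-X} have right adjoints \(\Hom_k^\ell(X, -)\) and \(\Hom_k^r(X, -)\) by the adjunctions \eqref{eq:adjunctionX-} and \eqref{eq:adjunction-X}, while \(- \otimes M\) \eqref{eq:-M} and \(M \otimes -\) \eqref{eq:M-} have right adjoints \([M, -]\) and \(\{M, -\}\) by \eqref{eq:adjunction-M} and \eqref{eq:adjunctionM-}. With the closedness of \(\HMod\) and these four adjunctions in hand, the hypotheses of the cited theorem are met and it yields the claimed right adjoints.

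The only real obstacle is bookkeeping: one must check that the four local adjoints listed are exactly those the cited theorem requires, and that no further (co)completeness assumption on \(\HPMod\) is secretly needed. Since \(\HPMod \cong {_{H_{par}}}\Mod\) is a module category over an algebra, it is complete and cocomplete, so any such ancillary hypothesis is automatic; the genuine content of the result was the construction of \([M, -]\) and \(\{M, -\}\) in the previous proposition, where the tensor--hom adjunction over \(H_{par}\) did the heavy lifting.
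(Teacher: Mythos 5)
Your proposal is correct and follows essentially the same route as the paper: the paper's justification for this corollary is precisely the paragraph preceding it, which invokes \cite[Theorem 3.1]{JKactegories} on the grounds that \(\HMod\) is closed monoidal and that the four local functors \eqref{eq:X-}--\eqref{eq:M-} have right adjoints by the two preceding propositions. Your additional remarks (closedness on the reversed side via invertibility of \(S\), and cocompleteness of \(\HPMod \cong {_{H_{par}}}\Mod\)) are sound points that the paper leaves implicit.
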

	
	Let us describe the adjoint of \eqref{eq:action_right} explicitly. 
	For any partial module \(M\) and \(m \in M,\) there is a morphism
	\begin{equation}
		\label{eq:lambdam}
		\lambda_m : H_{par} \to M, \quad [h^1]\cdots[h^n] \mapsto h^1 \bullet (\cdots \bullet (h^n \bullet m)).
	\end{equation}
	Given an endofunctor \(F\) on \(\HPMod\) and \(x \in F(H_{par})\), we define
	\[x \bullet h = F(\lambda_{[h]})(x);\]
	this way \(F(H_{par})\) is a right partial \(H\)-module, so it is a right \(H_{par}\)-module. Since \(F(H_{par})\) also a left \(H_{par}\)-module and \(F(\lambda_{[h]})\) is a morphism of partial modules, \(F(H_{par})\) is an \(H_{par}\)-bimodule. 
	
	Localizing a natural transformation \(\alpha : - \otimes X \Rightarrow F\) at \(H_{par}\) gives an element of
	\[\HparHom(H_{par} \otimes X, F(H_{par})) \cong \HHom(X, \{H_{par}, F(H_{par})\}),\]
	where \(\{H_{par}, F(H_{par})\} = \HparHom(H_{par} \otimes H, F(H_{par}))\) as before.
	
	Conversely, any natural transformation \(\beta : - \otimes X \Rightarrow F\) is completely determined by \(\varphi = \psi_{X, H_{par}}(\beta_{H_{par}}) \in \HHom(X, \{H_{par}, F(H_{par})\})\) (with \(\psi\) as defined in \eqref{eq:correspondence_righthom}): it follows from the naturality that
	\[\beta_M : M \otimes X \to F(M), \quad m \otimes x \mapsto (F(\lambda_m)\circ \varphi(x))(1_{H_{par}} \otimes 1_H).\]
	This map is left \(H_{par}\)-linear if and only if
	\begin{align*}
		h \bullet (F(\lambda_m) \circ \varphi(x))(1_{H_{par}} \otimes 1_H) &= (F(\lambda_{h_{(1)}\bullet m}) \circ \varphi(h_{(2)} \triangleright x))(1_{H_{par}} \otimes 1_H) \\
		\Leftrightarrow \qquad F(\lambda_m)(h \bullet \varphi(x)(1_{H_{par}} \otimes 1_H)) &= (F(\lambda_m)\circ F(\lambda_{[h_{(1)}]}) \circ \varphi(x))(1_{H_{par}} \otimes h_{(2)}) \\
		\Leftrightarrow \qquad (F(\lambda_m) \circ \varphi(x))([h_{(1)}] \otimes h_{(2)}) &= (F(\lambda_m)\circ F(\lambda_{[h_{(1)}]}) \circ \varphi(x))(1_{H_{par}} \otimes h_{(2)}).
	\end{align*}
	This condition is satisfied if
	\begin{equation}
		\label{eqII:varphi_right_linear}
		\varphi(x)(z[h] \otimes h') = F(\lambda_{[h]})(\varphi(x)(z \otimes h')) = \varphi(x)(z \otimes h') \bullet h
	\end{equation}
	for all \(h, h' \in H\) and \(z \in H_{par}\) (and conversely \eqref{eqII:varphi_right_linear} holds for \(\varphi = \psi_{X, H_{par}}(\beta_{H_{par}})\) by naturality of \(\beta\)). This tells that \(\varphi\) is also right \(H_{par}\)-linear, where \(H_{par} \otimes H\) is considered as a free right \(H_{par}\)-module. In conclusion, 
	\[\mathsf{Nat}(- \otimes X, F) \cong \HHom(X, \HparHom_{H_{par}}(H_{par} \otimes H, F(H_{par}))),\]
	and the right adjoint to \eqref{eq:action_right} is
	\[F \mapsto Z_F = \HparHom_{H_{par}}(H_{par} \otimes H, F(H_{par})).\]
	We remark that \(Z_F\) is an \(H\)-submodule of \(\{H_{par}, F(H_{par})\}\) and that it is isomorphic to the bimodule center of the \(H_{par}\)-bimodule \(\Hom_k(H, F(H_{par}))\), where the left and right partial action of \(H\) on \(f \in \Hom_k(H, F(H_{par}))\) are given by
	\[(h \cdot f \cdot h')(x) = h_{(1)} \bullet f(S(h_{(2)})x) \bullet h'.\]

	\section{Globalization revisited} 
	
	\subsection{Dilation of a partial module}
	
	The construction of the standard dilation (\cref{th:standard_dilation}) gives a functor \(D : \HPMod \to \HMod\), which is intimately connected with the enriched structure of \(\HPMod\) via \(\{-, -\}\). Namely, we will establish a natural transformation \(D \Rightarrow \{A_{par}, -\}\) and show that it is an isomorphism if \(H\) is a pointed Hopf algebra with finitely many grouplikes.
	
	Recall that \(A_{par}\) is a partial \(H\)-module algebra by means of the partial action described in Section~\ref{se:partialprelim}. Hence \(A_{par} \otimes H\) is a partial \(H\)-module, where the module structure is given by the diagonal action:
	\[h \bullet (a \otimes k) = h_{(1)} \bullet a \otimes h_{(2)}k = [h_{(1)}] a [S(h_{(2)})] \otimes h_{(3)} k\]
	for all \(a \in A_{par}\) and \(h, k \in H\). Recall furthermore that there is an algebra isomorphism \(\sigma\) between the partial smash product \(A_{par} \underline{\#} H\) and the partial Hopf algebra \(H_{par}\) (see \eqref{eq:sigma} in Section~\ref{se:partialprelim}).
	
	\begin{lemma}
		\label{leII:projtoHpar}
		Let \(\pi\) be the canonical projection \[A_{par} \otimes H \to A_{par} \underline{\#} H,\ a \otimes h \mapsto a \# h = a(h_{(1)} \bullet 1_{A_{par}}) \otimes h_{(2)}\] and let \(\sigma\) be the isomorphism \(A_{par} \underline{\#} H \cong H_{par}\) , sending \(a \# h \in A_{par} \underline{\#} H\) to \(a[h]\). Then \(\tau \coloneqq \sigma \circ \pi\) is a morphism of partial modules.
	\end{lemma}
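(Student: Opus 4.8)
The plan is to make the map $\tau$ completely explicit and then to reduce the morphism condition to a single identity inside $H_{par}$, which I can dispatch using the smash-product straightening rule together with one partial-representation axiom. By definition $\tau(a \otimes h) = \sigma(\pi(a \otimes h)) = \sigma(a \# h) = a[h]$, where $a \in A_{par}$ is viewed inside $H_{par}$ and $[h]$ is the image of $h$ under the canonical partial representation. The source $A_{par} \otimes H$ carries the diagonal partial action recalled just before the lemma, while the target is $H_{par}$ with its regular partial module structure, for which $h \bullet z = [h]\,z$ for every $z \in H_{par}$ (this is the partial module associated to the left regular $H_{par}$-module via $[-] : H \to H_{par}$). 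Unwinding $\tau(h \bullet (a \otimes k)) = h \bullet \tau(a \otimes k)$ on both sides, and using \eqref{eq:partial_module_Apar} to write $h_{(1)} \bullet a = [h_{(1)}]\,a\,[S(h_{(2)})]$, the statement becomes the identity
\[ [h_{(1)}]\,a\,[S(h_{(2)})]\,[h_{(3)}k] = [h]\,a\,[k] \]
for all $a \in A_{par}$ and $h, k \in H$.

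To prove this identity I would first isolate the commutation (straightening) relation $[h]\,a = (h_{(1)} \bullet a)[h_{(2)}] = [h_{(1)}]\,a\,[S(h_{(2)})]\,[h_{(3)}]$, valid for $a \in A_{par}$ and $h \in H$. This costs nothing given the hypotheses: since $\sigma$ in \eqref{eq:sigma} is an algebra isomorphism, applying it to the product $(1_{A_{par}} \# h)(a \# 1_H)$ evaluated via \eqref{eq:multiplication_smash} yields exactly $[h]\,a = (h_{(1)} \bullet a)[h_{(2)}]$, and expanding $h_{(1)} \bullet a$ with \eqref{eq:partial_module_Apar} gives the three-legged form. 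Substituting this into the right-hand side of the target identity produces $[h]\,a\,[k] = [h_{(1)}]\,a\,[S(h_{(2)})]\,[h_{(3)}]\,[k]$, and a single application of the alternative axiom \ref{PR5} in the form $[S(h_{(2)})]\,[h_{(3)}]\,[k] = [S(h_{(2)})]\,[h_{(3)}k]$ (reading the consecutive Sweedler legs $h_{(2)} \otimes h_{(3)}$ as a coproduct) collapses the right-hand side to $[h_{(1)}]\,a\,[S(h_{(2)})]\,[h_{(3)}k]$, which is precisely the left-hand side. This closes the argument.

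I expect the only delicate point to be the bookkeeping of Sweedler indices: one must check that the three legs produced by the diagonal action of $h$ on $a \otimes k$ line up with the legs consumed by the straightening relation and by \ref{PR5}, and in particular that \ref{PR5} is applied to the correct pair of consecutive legs $h_{(2)} \otimes h_{(3)}$. There is no need to reduce $a$ to generators $\varepsilon_g$ of $A_{par}$, since the target identity is $k$-linear in $a$ and the computation is valid for arbitrary $a \in A_{par}$. The remaining verification that $\tau$ is $k$-linear is immediate, so the proof reduces entirely to the displayed identity above.
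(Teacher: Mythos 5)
Your proposal is correct, and its overall skeleton coincides with the paper's: both proofs reduce the morphism condition to the single identity \([h_{(1)}]\,a\,[S(h_{(2)})]\,[h_{(3)}k] = [h]\,a\,[k]\) in \(H_{par}\) (with \(H_{par}\) carrying the left regular partial action \(h \bullet z = [h]z\)), and both dispatch the bracket \([h_{(3)}k]\) by the very same application of \ref{PR5} to the consecutive legs \(h_{(2)} \otimes h_{(3)}\). Where you genuinely diverge is in the remaining sub-identity. The paper proves \([h_{(1)}]\,a\,[S(h_{(2)})]\,[h_{(3)}] = [h]\,a\) in two steps: it moves \(a\) past \([S(h_{(2)})][h_{(3)}]\) using the fact, cited from \cite[Lemma 4.7]{ABVparreps}, that \(\tilde{\varepsilon}_k = [S(k_{(1)})][k_{(2)}]\) commutes with every element of \(A_{par}\), and then collapses \([h_{(1)}][S(h_{(2)})][h_{(3)}] = [h]\) via \ref{PR3} and \ref{PR1}. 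You instead obtain the equivalent straightening relation \([h]\,a = (h_{(1)} \bullet a)[h_{(2)}] = [h_{(1)}]\,a\,[S(h_{(2)})][h_{(3)}]\) by applying the algebra isomorphism \(\sigma\) of \eqref{eq:sigma} to the product \((1_{A_{par}} \# h)(a \# 1_H)\); this derivation is sound, needing only the smash multiplication \eqref{eq:multiplication_smash}, the partial module algebra axiom (PA2) to rewrite \((h_{(1)} \bullet 1_{A_{par}})(h_{(2)} \bullet a) = h \bullet a\), and the observation (as in \eqref{eq:smash_rewriting}) that \((h_{(1)} \bullet a) \otimes h_{(2)}\) already equals \((h_{(1)} \bullet a) \# h_{(2)}\). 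What your route buys is self-containedness: every ingredient is among the facts recalled in the paper's preliminaries, with no appeal to the external commutation lemma — though one should note that the relation you derive is itself essentially \cite[Lemma 4.7]{ABVparreps}, so the two arguments ultimately rest on equivalent structural facts about \(H_{par}\). Your remaining bookkeeping is also right: \ref{PR5} legitimately applies to the legs \(h_{(2)} \otimes h_{(3)}\) by coassociativity, and no reduction to the generators \(\varepsilon_g\) is needed since \eqref{eq:partial_module_Apar} gives \(h \bullet a = [h_{(1)}]\,a\,[S(h_{(2)})]\) for arbitrary \(a \in A_{par}\) by linearity.
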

	\begin{proof}
		For any \(h \in H\), we have
		\begin{equation}
			\label{eq:triple_bracket}
			[h_{(1)}][S(h_{(2)})][h_{(3)}] \overset{\ref{PR3}}{=} [h_{(1)}][S(h_{(2)})h_{(3)}] \overset{\ref{PR1}}{=} [h].
		\end{equation}
		For any \(a \in A_{par}\) and \(l, h \in H,\) we calculate
		\begin{align*}
			\tau(l \bullet (a \otimes h)) \overset{\phantom{\ref{PR5}}}&{=} \tau([l_{(1)}]a[S(l_{(2)})] \otimes l_{(3)}h) = [l_{(1)}]a[S(l_{(2)})][l_{(3)}h] \\
			\overset{\ref{PR5}}&{=} [l_{(1)}]a[S(l_{(2)})][l_{(3)}][h] \overset{(*)}{=} [l_{(1)}][S(l_{(2)})][l_{(3)}]a[h] \\
			\overset{\eqref{eq:triple_bracket}}&{=} [l] a [h] = [l] \tau(a \otimes h).
		\end{align*}
		In step \((*)\) we used that by \cite[Lemma 4.7]{ABVparreps}, \(\tilde{\varepsilon}_k = [S(k_{(1)})][k_{(2)}]\) commutes with any element of \(A_{par}\), for any \(k \in H\).
	\end{proof}
	
	The composition of the inverse of \(\sigma\) with the inclusion of \(A_{par} \underline{\#} H\) in \(A_{par} \otimes H\) provides a section to \(\tau\) because \(a(h_{(1)} \bullet 1_{A_{par}}) \# h_{(2)} = a \# h\) by \eqref{eq:smash_rewriting}.
	Let us denote it by \(\bar{\tau} : H_{par} \to A_{par} \otimes H\).
	
	Let \(M\) be a partial module and consider the global module
	\[\{A_{par}, M\} = \Hom_{\HPMod}(A_{par} \otimes H, M).\]
	We will show that \(\{A_{par}, M\}\) is a dilation of \(M\) when equipped with a suitable linear projection. For any \(m \in M,\) there is the morphism of partial modules \(\lambda_m\) \eqref{eq:lambdam}. This defines an isomorphism \(M \cong \Hom_{\HPMod}(H_{par}, M)\), whose inverse is given by \(f \mapsto f(1_{H_{par}})\). Let \(\theta\) be the map \(M \cong \Hom_{\HPMod}(H_{par}, M) \to \{A_{par}, M\}\) induced by \(\tau\), i.\,e.\ 
	\begin{equation}
		\label{eq:theta}
		\theta : M \to \{A_{par}, M\}, \quad \theta(m)(a \otimes h) = a \cdot (h \bullet m),
	\end{equation}
	where \(\cdot\) denotes the left action \(A_{par}\) on \(M\) induced by the partial module structure (see \eqref{eq:leftAaction}). There is also a natural linear map in the other direction, induced by \(\bar{\tau}\),
	\begin{equation}
		\kappa : \{A_{par}, M\} \to M, \quad f \mapsto f(1_{A_{par}} \otimes 1_H).
	\end{equation}
	It is clear that \(\kappa \circ \theta = \Id_M\). 
	
	Let \(X\) be a global \(H\)-module. For any \(b \in A_{par}, h \in H\) and \(x \in X,\)
	\begin{equation*}
		\varepsilon_h \cdot (b \otimes x) = h_{(1)} \bullet (S(h_{(2)}) \bullet (b \otimes x)) = h_{(1)} \bullet (S(h_{(4)}) \bullet b) \otimes h_{(2)}S(h_{(3)}) \btright x = \varepsilon_h b \otimes x,
	\end{equation*}
	so, since \(A_{par}\) is generated by the elements \(\varepsilon_h\),
	\begin{equation}
		\label{eq:AMX}
		a \cdot (b \otimes x) = ab \otimes x
	\end{equation}
	for all \(a, b \in A_{par}\) and \(x \in X\), i.\,e.\ the left action of \(A_{par}\) on \(A_{par} \otimes X\) (and in particular on \(A_{par} \otimes H\)) only concerns the first tensorand.  
	
	We define
	\begin{align}
		\label{eq:calT}
		\Tt = \theta \circ \kappa :\ & \{A_{par}, M\} \to \{A_{par}, M\}, \\
		&\Tt(f)(a \otimes h) = a \cdot (h \bullet f(1_{A_{par}} \otimes 1_H)) \overset{\eqref{eq:AMX}}{=} f(a \varepsilon_{h_{(1)}} \otimes h_{(2)}). \nonumber
	\end{align}

	\begin{proposition}
		\label{propII:Apar_min_dilation}
		The triple \((\{A_{par}, M\}, \Tt, \theta)\) is a minimal dilation of \(M\).
	\end{proposition}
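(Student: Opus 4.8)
The plan is to verify the three defining conditions (Di1)--(Di3) of a dilation together with minimality, in that order. Condition (Di1) is already in hand: $\{A_{par}, M\}$ is a global $H$-module by the enrichment result, with action $(h' \rightharpoonup f)(a \otimes k) = f(a \otimes kh')$. For (Di2), the idempotency $\Tt^2 = \Tt$ is immediate from $\kappa \circ \theta = \Id_M$, since $\Tt = \theta \circ \kappa$ gives $\Tt^2 = \theta \circ (\kappa \circ \theta) \circ \kappa = \theta \circ \kappa = \Tt$; its image is $\theta(M) \cong M$. The real content of (Di2) is the c-condition \eqref{eq:ccondition}.

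To check the c-condition, I would first compute $\Tt_h$ explicitly. Starting from $\Tt_h(f) = h_{(1)} \rightharpoonup \Tt(S(h_{(2)}) \rightharpoonup f)$ and unwinding the $\rightharpoonup$-action together with the formula \eqref{eq:calT} for $\Tt$, one lands on $\Tt_h(f)(a \otimes k) = f(a\varepsilon_{k_{(1)}h_{(1)}} \otimes k_{(2)} h_{(2)} S(h_{(3)}))$; the antipode identity $\sum h_{(1)} \otimes h_{(2)}S(h_{(3)}) = h \otimes 1_H$ then collapses this to the clean formula $\Tt_h(f)(a \otimes k) = f(a\varepsilon_{k_{(1)}h} \otimes k_{(2)})$. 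Feeding this and \eqref{eq:calT} into both composites, the c-condition $\Tt \circ \Tt_h = \Tt_h \circ \Tt$ reduces to the single identity in $A_{par} \otimes H$:
\[\sum \varepsilon_{k_{(1)}}\,\varepsilon_{k_{(2)}h} \otimes k_{(3)} = \sum \varepsilon_{k_{(1)}h}\,\varepsilon_{k_{(2)}} \otimes k_{(3)}.\]
This is exactly the partial-action axiom (PA3) for the partial $H$-module algebra $A_{par}$ --- applied to the first two legs $k_{(1)}, k_{(2)}$ of $\Delta^{(3)}(k)$, with $k_{(3)}$ a passive bystander --- once one recalls that $h \bullet 1_{A_{par}} = \varepsilon_h$, so that (PA3) with $a = 1_{A_{par}}$ reads $\varepsilon_{y_{(1)}}\varepsilon_{y_{(2)}h} = \varepsilon_{y_{(1)}h}\varepsilon_{y_{(2)}}$. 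I expect this chain of Sweedler bookkeeping --- and in particular recognizing the resulting $\varepsilon$-identity as (PA3) --- to be the main obstacle.

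Granting the c-condition, the image $\Tt(\{A_{par},M\}) = \theta(M)$ carries the induced partial structure \eqref{eq:TM}, and (Di3) asks that $\theta$ be an isomorphism of partial modules onto it. Injectivity is free from $\kappa \circ \theta = \Id_M$, and surjectivity onto $\theta(M)$ is tautological, so it remains to check $\theta(h \bullet m) = \Tt(h \rightharpoonup \theta(m))$. Evaluating the right-hand side at $a \otimes k$ via \eqref{eq:calT} and \eqref{eq:theta} produces $(a\varepsilon_{k_{(1)}}) \cdot (k_{(2)}h \bullet m)$, and using the description $\varepsilon_y \cdot m = y_{(1)} \bullet (S(y_{(2)}) \bullet m)$ of the $A_{par}$-action \eqref{eq:leftAaction} together with axiom \ref{PR3} (in the form $\pi(y_{(1)})\pi(S(y_{(2)}))\pi(y_{(3)}h) = \pi(y)\pi(h)$, applied with $y = k$) collapses this to $a \cdot (k \bullet (h \bullet m)) = \theta(h \bullet m)(a \otimes k)$.

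Finally, for minimality I would take an $H$-submodule $W \subseteq \{A_{par}, M\}$ with $\Tt|_W = 0$ and show $W = 0$. Since $W$ is $\rightharpoonup$-stable, for every $f \in W$ and $h \in H$ we have $\Tt(h \rightharpoonup f) = 0$, which by $\Tt = \theta \circ \kappa$ and injectivity of $\theta$ means $(h \rightharpoonup f)(1_{A_{par}} \otimes 1_H) = f(1_{A_{par}} \otimes h) = 0$ for all $h$. As $f$ is in particular left $A_{par}$-linear and the left $A_{par}$-action on $A_{par} \otimes H$ only touches the first tensorand by \eqref{eq:AMX}, we obtain $f(a \otimes h) = a \cdot f(1_{A_{par}} \otimes h) = 0$ for all $a$ and $h$, whence $f = 0$ and $W = 0$. (Note that only minimality, not properness, is being claimed here.)
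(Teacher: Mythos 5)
Your proposal is correct and follows essentially the same route as the paper's proof: both establish the projection property from \(\kappa \circ \theta = \Id_M\), reduce the c-condition to the identity \(\varepsilon_{k_{(1)}}\varepsilon_{k_{(2)}h} = \varepsilon_{k_{(1)}h}\varepsilon_{k_{(2)}}\) in \(A_{par}\), check that \(\theta\) intertwines the partial actions using the (PR) axioms, and prove minimality by evaluating at \(1_{A_{par}} \otimes h\) and invoking left \(A_{par}\)-linearity. The only cosmetic differences are that the paper justifies the \(\varepsilon\)-identity by citing Lemma 4.7 of the partial representations paper rather than deriving it from (PA3) with \(a = 1_{A_{par}}\), and uses (PR5) together with the triple-bracket identity where you apply (PR3) in a single combined step.
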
 
	\begin{proof}
		Since \(\Tt = \theta \circ \kappa\) by definition and \(\kappa \circ \theta = \Id_M\), the linear map \(\Tt\) is a projection. 
		Let us show that \(\Tt\) obeys the \(c\)-condition \eqref{eq:ccondition}. Recall that the left \(H\)-module structure on \(\{A_{par}, M\}\) is given by \eqref{eq:Hmod_righthom}, i.\,e.\
		\((h \rightharpoonup f)(a \otimes k) = f(a \otimes kh).\) 
		For any \(h, k \in H\) we have on the one hand
		\begin{align*}
			\mathcal{T} (\mathcal{T}_h (f))(a\otimes k) &= \mathcal{T}_h(f) (a\varepsilon_{k_{(1)}} \otimes k_{(2)}) = (h_{(1)} \rightharpoonup \mathcal{T} (S(h_{(2)}) \rightharpoonup f)) (a\varepsilon_{k_{(1)}} \otimes k_{(2)}) \\
			&= \mathcal{T} (S(h_{(2)}) \rightharpoonup f) (a\varepsilon_{k_{(1)}} \otimes k_{(2)}h_{(1)}) = (S(h_{(3)}) \rightharpoonup f) (a\varepsilon_{k_{(1)}} \varepsilon_{k_{(2)}h_{(1)}} \otimes k_{(3)}h_{(2)}) \\
			&= f (a\varepsilon_{k_{(1)}} \varepsilon_{k_{(2)}h_{(1)}} \otimes k_{(3)}h_{(2)}S(h_{(3)})) = f (a\varepsilon_{k_{(1)}} \varepsilon_{k_{(2)}h} \otimes k_{(3)}) .
		\end{align*}
		On the other hand, 
		\begin{align*}
			\mathcal{T}_h (\mathcal{T} (f))(a\otimes k) &= (h_{(1)} \rightharpoonup \mathcal{T} (S(h_{(2)}) \rightharpoonup \mathcal{T}(f))) (a\otimes k) 
			= \mathcal{T} (S(h_{(2)}) \rightharpoonup \mathcal{T}(f)) (a\otimes kh_{(1)}) \\
			&= (S(h_{(3)}) \rightharpoonup \mathcal{T}(f)) (a \varepsilon_{k_{(1)}h_{(1)}}\otimes k_{(2)}h_{(2)}) 
			= \mathcal{T}(f) (a \varepsilon_{k_{(1)}h_{(1)}}\otimes k_{(2)}h_{(2)}S(h_{(3)})) \\
			&= \mathcal{T}(f) (a \varepsilon_{k_{(1)}h}
			\otimes k_{(2)})
			= f(a \varepsilon_{k_{(1)}h} \varepsilon_{k_{(2)}}
			\otimes k_{(3)}).
		\end{align*}
		Since $\varepsilon_{k_{(1)}h} \varepsilon_{k_{(2)}}= [k_{(1)}] \varepsilon_h [S(k_{(2)})] = \varepsilon_{k_{(1)}} \varepsilon_{k_{(2)}h}$ in $A_{par}$ by \cite[Lemma 4.7]{ABVparreps}, the above computation shows that \(\Tt \circ \Tt_h = \Tt_h \circ \Tt\) for all \(h \in H\). The image of \(\Tt\) is now a partial module via \(h \bullet f = \Tt(h \rightharpoonup f)\) \eqref{eq:TM} for all \(h \in H\) and \(f \in \Tt(\{A_{par}, M\})\).
		
		Since \(\Tt \circ \theta = \theta \circ \kappa \circ \theta = \theta\), the images of \(\theta\) and \(\Tt\) coincide. 
		It remains to show that \(\theta\) corestricts to an isomorphism of partial modules between \(M\) and \(\Tt(\{A_{par}, M\})\).
		For $m\in M$, $a\in A_{par}$ and $h,k\in H$, we have
		\[
		\theta (h\bullet m)(a\otimes k)=a\cdot (k\bullet (h \bullet m)) ,
		\]
		while
		\begin{align*}
			(h\bullet \theta (m))(a\otimes k) &= \mathcal{T}(h \rightharpoonup \theta (m))(a\otimes k) 
			= (h\rightharpoonup \theta (m)) (a\varepsilon_{k_{(1)}}\otimes k_{(2)}) \\
			&= \theta (m) (a\varepsilon_{k_{(1)}}\otimes k_{(2)}h) 
			= a\varepsilon_{k_{(1)}} \cdot (k_{(2)}h \bullet m)\\
			&= a\cdot (k_{(1)} \bullet (S(k_{(2)}) \bullet  (k_{(3)}h \bullet m))) \\
			\overset{\ref{PR5}}&{=} a\cdot (k_{(1)} \bullet (S(k_{(2)}) \bullet  (k_{(3)}\bullet (h \bullet m))))
			\overset{\eqref{eq:triple_bracket}}{=} a\cdot (k\bullet (h\bullet m)).
		\end{align*}
		We conclude that \((\{A_{par}, M\}, \mathcal{T}, \theta)\) is a dilation. To see that it is minimal, take \(f \in \{A_{par}, M\}\) such that \(\mathcal{T}(l \rightharpoonup f) = 0\) for all \(l \in H\). Then for all \(a \in A_{par}\) and \(h, l \in H\), we have \\ \(f(a \varepsilon_{h_{(1)}} \otimes h_{(2)} l) = 0\), so \(f\) is clearly zero. 
		This shows that no non trivial \(H\)-submodule of \(\{A_{par}, M\}\) is annihilated by \(\mathcal{T}\).
	\end{proof}
	
	\begin{proposition}
		\label{cor:Xi}
		The assignment
		\begin{align}
			\label{eqII:Xi}
			\Xi_M :\ & \overline{M} \to \{A_{par}, M\},\\ &\Xi_M\left(\sum_i h_i \rightharpoonup  \varphi(m_i)\right)(a \otimes k) =  a \cdot \sum_i (kh_i \bullet m_i) \nonumber
		\end{align}
		for any partial module \(M\) defines a natural transformation \(\Xi : D \Rightarrow \{A_{par}, -\}\) with injective components.
	\end{proposition}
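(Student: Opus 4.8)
The plan is to reduce everything to the observation that, viewing an element $f = \sum_i h_i \rightharpoonup \varphi(m_i)$ of $\overline{M}$ as the map $k \mapsto f(k) = \sum_i (kh_i) \bullet m_i$ from $H$ to $M$, the defining formula \eqref{eqII:Xi} reads simply
\[
\Xi_M(f)(a \otimes k) = a \cdot f(k),
\]
where $\cdot$ is the left $A_{par}$-action of \eqref{eq:leftAaction}. Since the right-hand side depends on $f$ only through its values, this shows at once that $\Xi_M$ is a well-defined $k$-linear map $\overline{M} \to \Hom_k(A_{par} \otimes H, M)$, independent of the chosen expression $\sum_i h_i \rightharpoonup \varphi(m_i)$. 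The $H$-linearity of $\Xi_M$ is then immediate: recalling that the $H$-action on $\overline{M}$ is $(h' \rightharpoonup f)(k) = f(kh')$ and that on $\{A_{par}, M\}$ is \eqref{eq:Hmod_righthom}, both $\Xi_M(h' \rightharpoonup f)$ and $h' \rightharpoonup \Xi_M(f)$ send $a \otimes k$ to $a \cdot f(kh')$.

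Next I would isolate the identity $\Xi_M \circ \varphi = \theta$, obtained by comparing $\Xi_M(\varphi(m))(a \otimes k) = a \cdot (k \bullet m)$ with the definition \eqref{eq:theta} of $\theta$. To see that $\Xi_M$ really lands in the subspace $\{A_{par}, M\} = \Hom_{\HPMod}(A_{par} \otimes H, M)$ of partial-module morphisms, I would combine this identity with $H$-linearity: for $f = \sum_i h_i \rightharpoonup \varphi(m_i)$ one gets $\Xi_M(f) = \sum_i h_i \rightharpoonup \theta(m_i)$, and since each $\theta(m_i)$ is a morphism of partial modules by \cref{propII:Apar_min_dilation} and $\{A_{par}, M\}$ is stable under $\rightharpoonup$, so is $\Xi_M(f)$. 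This is where I expect the only real obstacle to lie: verifying directly that $\Xi_M(f)$ intertwines the partial actions would require expanding $([h_{(1)}] a [S(h_{(2)})]) \cdot f(h_{(3)}k)$ and invoking the partial representation axioms, whereas routing through $\theta$ and the already-established \cref{propII:Apar_min_dilation} makes the check free.

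It then remains to prove naturality and injectivity. For a morphism $g : M \to N$ of partial modules, the dilation functor acts by postcomposition, $D(g)(f) = g \circ f$, and so does $\{A_{par}, g\}$; hence $\Xi_N(D(g)(f))(a \otimes k) = a \cdot g(f(k))$ while $\{A_{par}, g\}(\Xi_M(f))(a \otimes k) = g(a \cdot f(k))$, and these coincide because $g$, being a morphism of partial modules, is $H_{par}$-linear and in particular $A_{par}$-linear. Injectivity is finally read off from the reformulation: if $\Xi_M(f) = 0$, then for every $k \in H$ we have $f(k) = 1_{A_{par}} \cdot f(k) = \Xi_M(f)(1_{A_{par}} \otimes k) = 0$ (using $1_{A_{par}} = 1_{H_{par}}$), so $f = 0$.
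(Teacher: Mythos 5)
Your proof is correct, but it follows a genuinely different route from the paper's. The paper obtains \(\Xi_M\) in one stroke from the universal property of the standard dilation among minimal dilations (\cref{prop:universal_minimal}), applied to the minimal dilation \((\{A_{par}, M\}, \Tt, \theta)\) of \cref{propII:Apar_min_dilation}: existence, well-definedness, \(H\)-linearity and injectivity then all come for free, and only naturality is checked by hand (by the same computation you give). You instead verify everything directly: well-definedness by observing that \(\Xi_M(f)(a \otimes k) = a \cdot f(k)\) depends on \(f \in \overline{M} \subseteq \Hom_k(H, M)\) only as a function; membership in \(\{A_{par}, M\}\) by writing \(\Xi_M(f) = \sum_i h_i \rightharpoonup \theta(m_i)\), using that \(\theta\) lands in \(\{A_{par}, M\}\) and that \(\{A_{par}, M\}\) is stable under the action \eqref{eq:Hmod_righthom}; and injectivity by the one-line evaluation at \(1_{A_{par}} \otimes k\). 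Your route is more elementary: it needs neither \cref{prop:universal_minimal} nor the minimality assertion of \cref{propII:Apar_min_dilation} (only the fact that \(\theta\) is valued in \(\{A_{par},M\}\), which is the content of the construction \eqref{eq:theta} resting on \cref{leII:projtoHpar}), and your injectivity argument is substantially shorter than the paper's, which ultimately rests on the minimality argument inside \cref{prop:universal_minimal}. What the paper's route buys in exchange is structural information your argument does not produce: \(\Xi_M\) is the \emph{unique} \(H\)-linear map satisfying \(\Xi_M \circ \varphi = \theta\) and \(\Tt \circ \Xi_M = \Xi_M \circ \overline{T}\), and \cref{prop:universal_minimal} moreover gives that \(\Xi_M\) is bijective if and only if the dilation \((\{A_{par},M\},\Tt,\theta)\) is proper --- exactly the mechanism exploited later in \cref{th:Xi_iso_pointed} and \cref{prop:NHpar}. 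One small citation slip: you invoke \cref{propII:Apar_min_dilation} to justify that each \(\theta(m_i)\) is a morphism of partial modules, but this is really part of the definition \eqref{eq:theta} of \(\theta\) (via \cref{leII:projtoHpar}); nothing is lost, since that fact is established before the proposition.
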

	\begin{proof}
		By the universal property of the standard dilation (\cref{prop:universal_minimal}), there is a unique injective \(H\)-linear map \(\Xi_M : \overline{M} \to \{A_{par}, M\}\) such that \(\Tt \circ \Xi_M = \Xi_M \circ \overline{T}\) and \(\Xi_M \circ \varphi = \theta\). It maps \(x = \sum_i h_i \rightharpoonup  \varphi(m_i) \in \overline{M}\) to \(\sum_i h_i \rightharpoonup \theta(m_i)\), i.\,e\
		\[\Xi_M(x)(a \otimes k) = \sum_i \theta(m_i)(a \otimes kh_i) = a \cdot \sum_i (kh_i \bullet m_i).\]
		Let us check that it is natural in \(M\). Let \(f\) be a morphism of partial $H$-modules $M\rightarrow N$. Then for elements $h_i, k \in H$, $m_i \in M$ and $a\in A_{par}$, we have
		\begin{align*}
			\Xi_N \left( \sum_i h_i \rightharpoonup \varphi_N (f(m_i)) \right) (a\otimes k) &= a\cdot \sum_i kh_i \bullet f(m_i) 
			= f\left( a\cdot \sum_i kh_i \bullet m_i \right) \\
			&= f \left( \Xi_M \left( \sum_i h_i \rightharpoonup \varphi_M (m_i )\right) (a \otimes k)\right). \qedhere
		\end{align*}
	\end{proof}

	\noindent In fact, an element \(f \in \{A_{par}, M\}\) is completely determined by the linear map \(h \mapsto f(1_{A_{par}} \otimes h)\), essentially because \(A_{par}\) is a cyclic partial \(H\)-module. This allows us to view \(\{A_{par}, M\}\) as a submodule of \(\Hom_k(H, M)\), consisting of those maps that are \textit{partially \(H\)-linear}. We introduce this concept below.
	
	\begin{definition}
		Let \(X\) be a global \(H\)-module and \(M\) a partial \(H\)-module. A linear map \(g : X \to M\) is said to be \textit{partially \(H\)-linear} if \begin{equation}
			\label{eq:Psibar_image}
			h_{(1)} \bullet g(S(h_{(2)})\btright x) = h_{(1)} \bullet (S(h_{(2)}) \bullet g(x))
		\end{equation}
		for all \(h \in H\) and \(x \in X\). We denote the set of partially \(H\)-linear maps from \(X\) to \(M\) by \({_{(H)}\Hom(X,M)}\).
	\end{definition}
	In view of \eqref{eq:leftAaction}, \eqref{eq:Psibar_image} is equivalent to 
	\begin{equation}
		\label{eq:Psibar_image3}
		h \bullet g(x) = \varepsilon_{h_{(1)}} \cdot g(h_{(2)}\btright x)
	\end{equation}
	for all \(h \in H\) and \(x \in X\).

	\begin{proposition}
		\label{prop:Psibar} Let \(X\) be a global \(H\)-module and \(M\) a partial \(H\)-module.
		The linear map 
		\begin{equation}
			\label{eq:Psibar}
			\Psi :\Hom_{\HPMod}(A_{par} \otimes X, M) \rightarrow \Hom_k (X, M),\quad \Psi (f)(x)=f(1_{A_{par}}\otimes x)
		\end{equation}
		is injective and its image is exactly \({_{(H)}\Hom(X,M)}\).
	\end{proposition}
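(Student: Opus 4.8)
The plan is to use that a morphism of partial $H$-modules is the same thing as an $H_{par}$-linear map, so that every $f\in\Hom_{\HPMod}(A_{par}\otimes X,M)$ is in particular left $A_{par}$-linear. Since by \eqref{eq:AMX} the left $A_{par}$-action on $A_{par}\otimes X$ affects only the first tensorand, one has $a\otimes x=a\cdot(1_{A_{par}}\otimes x)$, whence $f(a\otimes x)=a\cdot f(1_{A_{par}}\otimes x)=a\cdot\Psi(f)(x)$ for all $a\in A_{par}$ and $x\in X$. This formula recovers $f$ entirely from $\Psi(f)$, so $\Psi$ is injective. It will also be the workhorse for the image computation.

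Next I would show $\im\Psi\subseteq{_{(H)}\Hom(X,M)}$. Writing $g=\Psi(f)$ and evaluating the morphism identity $f(h\bullet(a\otimes x))=h\bullet f(a\otimes x)$ at $a=1_{A_{par}}$, the diagonal action \eqref{actegory1} gives $h\bullet(1_{A_{par}}\otimes x)=\varepsilon_{h_{(1)}}\otimes h_{(2)}\btright x$, and the formula of the previous paragraph turns the left-hand side into $\varepsilon_{h_{(1)}}\cdot g(h_{(2)}\btright x)$. Thus $\varepsilon_{h_{(1)}}\cdot g(h_{(2)}\btright x)=h\bullet g(x)$, which is exactly the partial $H$-linearity condition \eqref{eq:Psibar_image3}.

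The reverse inclusion is the heart of the matter. Given a partially $H$-linear $g:X\to M$, I set $f(a\otimes x)=a\cdot g(x)$, which is well-defined and $k$-linear and plainly satisfies $\Psi(f)=g$; the only thing left is that $f$ respects the partial action, i.e.\ $(h_{(1)}\bullet a)\cdot g(h_{(2)}\btright x)=h\bullet(a\cdot g(x))$. The obstacle is that $a$ sits between the bracket factors, so partial $H$-linearity cannot be applied directly. To unblock this I would first record the straightening relation $[h]\,a=(h_{(1)}\bullet a)\,[h_{(2)}]$ in $H_{par}$, valid for $a\in A_{par}$; it follows either from $\sigma$ in \eqref{eq:sigma} being an algebra isomorphism, or directly by commuting $a$ past the $\tilde\varepsilon$-type factor $[S(h_{(2)})][h_{(3)}]$ (which commutes with every element of $A_{par}$ by \cite[Lemma 4.7]{ABVparreps}) and then invoking \eqref{eq:triple_bracket}.

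With this in hand I would rewrite $h\bullet(a\cdot g(x))=[h]\,a\,g(x)$ and apply three substitutions in sequence: the straightening relation to slide $a$ to the left of $[h_{(2)}]$; partial $H$-linearity \eqref{eq:Psibar_image3} to replace $[h_{(2)}]\,g(x)$ by a term of the form $\varepsilon_{(\cdot)}\cdot g((\cdot)\btright x)$; and finally the multiplicativity axiom (PA2) of the partial action on $A_{par}$, in the guise $(k_{(1)}\bullet a)(k_{(2)}\bullet 1_{A_{par}})=k\bullet a$, to fuse the resulting product of partial actions back into a single $h_{(1)}\bullet a$. Coassociativity then reassembles the Sweedler legs into $(h_{(1)}\bullet a)\cdot g(h_{(2)}\btright x)$, completing the morphism check. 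I expect this three-step chain to be the only delicate point, the remainder being routine index bookkeeping.
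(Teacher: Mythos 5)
Your proof is correct and follows essentially the same route as the paper: injectivity and the inclusion of the image in \({_{(H)}\Hom(X,M)}\) come from left \(A_{par}\)-linearity together with \eqref{eq:AMX}, and the converse is obtained by setting \(f(a\otimes x)=a\cdot g(x)\) and verifying the morphism property. The only (cosmetic) difference is that the paper checks this property on generators \(a=\varepsilon_{k^1}\cdots\varepsilon_{k^n}\) using \(\varepsilon_{h_{(1)}k}[h_{(2)}]=[h]\varepsilon_k\) from \cite[Lemma 4.7(b)]{ABVparreps}, whereas you package the same fact as the straightening relation \([h]a=(h_{(1)}\bullet a)[h_{(2)}]\) for arbitrary \(a\in A_{par}\) — a multiplicative extension of the paper's identity, so the argument is the same in coordinate-free form.
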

	\begin{proof}
		Take \(f \in \Hom_{\HPMod}(A_{par} \otimes X, M)\). Then \(f\) is in particular left \(A_{par}\)-linear. Hence for any \(a \in A_{par}\) and \(x \in X\),
		\[f(a \otimes x) \overset{\eqref{eq:AMX}}{=} f(a \cdot (1_{A_{par}} \otimes x)) = a \cdot f(1_{A_{par}} \otimes x) = a \cdot \Psi(f)(x).\]
		This shows that \(f\) is completely determined by \(\Psi(f)\).
		Moreover,
		\begin{align*}
			h_{(1)} \bullet \Psi(f)(S(h_{(2)})\btright x) &= h_{(1)} \bullet f(1_{A_{par}} \otimes S(h_{(2)}) \btright x) 
			= f(h_{(1)} \bullet 1_{A_{par}} \otimes h_{(2)}S(h_{(3)})\btright x) \\
			&= f(\varepsilon_h \otimes x) 
			= \varepsilon_h \cdot f(1_{A_{par}} \otimes x) = h_{(1)} \bullet (S(h_{(2)}) \bullet \Psi(f)(x)), 
		\end{align*}
		so \(\Psi(f) \in {_{(H)}\Hom(X,M)}\).
		Suppose now that \(g : H \to M\) is partially \(H\)-linear, and define
		\begin{equation}
			\label{eq:ffromg}
			f : A_{par} \otimes X \to M, \quad a \otimes x \mapsto a \cdot g(x).
		\end{equation}
		Then for any \(a = \varepsilon_{k^1} \cdots \varepsilon_{k^{n}}\) and \(h\in H\) we have
		\begin{align*}
			f(h_{(1)} \bullet a \otimes h_{(2)}\btright x) &= f(\varepsilon_{h_{(1)}k^1} \cdots \varepsilon_{h_{(n)}k^n} \varepsilon_{h_{(n + 1)}} \otimes h_{(n + 2)}\btright x) \\
			&= \varepsilon_{h_{(1)}k^1} \cdots \varepsilon_{h_{(n)}k^n} \varepsilon_{h_{(n + 1)}} \cdot g(h_{(n + 2)} \btright x) \\
			\overset{\eqref{eq:Psibar_image3}}&{=} \varepsilon_{h_{(1)}k^1} \cdots \varepsilon_{h_{(n)}k^n} \cdot (h_{(n + 1)} \bullet g(x)) \\
			\overset{(*)}&{=} h \bullet( \varepsilon_{k^1} \cdots \varepsilon_{k^{n}} \cdot g(x)) = h \bullet f(a \otimes x).
		\end{align*}
		In equality \((*)\), we used the fact that in \(H_{par},\)
		\(\varepsilon_{h_{(1)}k}[h_{(2)}] = [h]\varepsilon_k\)
		for all \(h, k \in H\) (see \cite[Lemma 4.7(b)]{ABVparreps}). Hence \(f \in \Hom_{\HPMod}(A_{par} \otimes X, M),\) and \(\Psi(f) = g\).
	\end{proof}

	If \(X\) is an \(H\)-bimodule, then \(\Hom_{\HPMod}(A_{par} \otimes X, M)\) is a left \(H\)-module by means of the action
	\[(h \rightharpoonup f)(a \otimes x) = f(a \otimes x \btleft h),\]
	mimicking \eqref{eq:Hmod_righthom}. The map \(\Psi\) \eqref{eq:Psibar} is then clearly \(H\)-linear, so \({_{(H)}\Hom(X,M)}\) is an \(H\)-submodule of \(\Hom_k(X, M)\) and \(\Hom_{\HPMod}(A_{par} \otimes X, M) \cong {_{(H)}\Hom(X,M)}\) as \(H\)-modules. Specializing to \(X = H\) gives the following corollary of \cref{prop:Psibar}.
	
	\begin{corollary} 
		\label{cor:iso_Psi}
		Let \(M\) be a partial \(H\)-module. Then \(\{A_{par}, M\} \cong {_{(H)}\Hom(H,M)}\) as \(H\)-modules. 
	\end{corollary}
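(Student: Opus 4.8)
The plan is to obtain the statement as a direct specialization of \cref{prop:Psibar} to the case \(X = H\). First I would equip \(H\) with its canonical \(H\)-bimodule structure given by left and right regular multiplication, so that in particular \(H\) is a global left \(H\)-module and \cref{prop:Psibar} applies. With this choice of \(X\), the domain \(\Hom_{\HPMod}(A_{par} \otimes H, M)\) of the map \(\Psi\) is precisely \(\{A_{par}, M\}\) by definition, and the proposition already tells us that \(\Psi\) is injective with image exactly \({_{(H)}\Hom(H, M)}\). Hence \(\Psi\) restricts to a linear isomorphism \(\{A_{par}, M\} \cong {_{(H)}\Hom(H, M)}\), and it only remains to upgrade this to an isomorphism of \(H\)-modules.

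For that, I would check that \(\Psi\) intertwines the two relevant left \(H\)-actions. On the source, the action is the one recalled just before the statement, \((h \rightharpoonup f)(a \otimes k) = f(a \otimes kh)\), induced by the right regular \(H\)-action on the second tensorand; on the target, \({_{(H)}\Hom(H, M)}\) is an \(H\)-submodule of \(\Hom_k(H, M)\) for the action \((h \rightharpoonup g)(k) = g(kh)\) introduced in the preliminaries. A one-line evaluation then gives
\[\Psi(h \rightharpoonup f)(k) = (h \rightharpoonup f)(1_{A_{par}} \otimes k) = f(1_{A_{par}} \otimes kh) = \Psi(f)(kh) = (h \rightharpoonup \Psi(f))(k),\]
so \(\Psi\) is \(H\)-linear. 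Together with the previous paragraph, this exhibits \(\Psi\) as an isomorphism of \(H\)-modules, which is the claim.

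There is essentially no hard step here: the substantive work — the injectivity of \(\Psi\) and the identification of its image with the partially \(H\)-linear maps — has already been done in \cref{prop:Psibar}, and the \(H\)-linearity of \(\Psi\) was already observed in the paragraph preceding the corollary. The only point requiring a moment's care is to ensure that the two left \(H\)-module structures are both induced by the \emph{same} (right regular) action of \(H\) on itself, so that the evaluation formula \(\Psi(f)(k) = f(1_{A_{par}} \otimes k)\) visibly commutes with \(\rightharpoonup\); once the bimodule structure on \(X = H\) is fixed as above, this is automatic.
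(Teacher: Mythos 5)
Your proof is correct and takes essentially the same route as the paper: the paper also obtains the corollary by specializing \cref{prop:Psibar} to \(X = H\) with its regular \(H\)-bimodule structure, after observing (for a general \(H\)-bimodule \(X\)) that \(\Psi\) is \(H\)-linear for the actions induced by right multiplication, which is exactly your one-line computation. There is nothing to add.
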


	\begin{remark}
		\label{rem:2nd_condition}
		A partially \(H\)-linear map \(g : X \to M\) also satisfies
		\begin{equation}
			\label{eq:Psibar_image2}
			S(h_{(1)}) \bullet g(h_{(2)}\btright x) = S(h_{(1)}) \bullet (h_{(2)} \bullet g(x))
		\end{equation}
		for all \(h \in H\) and \(x \in X\). Indeed, let \(f \in \Hom_{\HPMod}(A_{par} \otimes X, M)\) be as defined in \eqref{eq:ffromg}. Then 
		\begin{align*}
			S(h_{(1)}) \bullet (h_{(2)} \bullet g(x)) &= S(h_{(1)}) \bullet (h_{(2)} \bullet f(1_{A_{par}} \otimes x)) 
			= S(h_{(1)}) \bullet f(\varepsilon_{h_{(2)}} \otimes h_{(3)}\btright x) \\
			&= f(S(h_{(2)}) \bullet \varepsilon_{h_{(3)}} \otimes S(h_{(1)})h_{(4)}\btright x) \\
			&= f(\varepsilon_{S(h_{(3)}) h_{(4)}} \varepsilon_{S(h_{(2)})} \otimes S(h_{(1)})h_{(5)}\btright x) \\
			&= f(\varepsilon_{S(h_{(2)})} \otimes S(h_{(1)})h_{(3)}\btright x) 
			= S(h_{(1)}) \bullet f(1_{A_{par}} \otimes h_{(2)} \btright x) \\
			&= S(h_{(1)}) \bullet g(h_{(2)}\btright x).
		\end{align*}
	\end{remark}

	\subsection{The pointed case}
	
	In this section, we show that whenever \(H\) is a pointed Hopf algebra with finitely many grouplikes over a field of characteristic zero, then \(\Xi\) is a natural isomorphism. This result applies in particular to all finite group algebras and to the Sweedler Hopf algebra \(H_4\).
	The following proposition provides a sufficient condition for \(\Xi\) to be an isomorphism.
	
	\begin{proposition}
		\label{le:Xi_iso_b}
		Let \(t \in H\) be a left integral and suppose that there exists an element \(b \in A_{par}\) such that \(t \bullet b = 1_{A_{par}}\). Then the morphism \(\Xi_M \) from \cref{cor:Xi} is an isomorphism for any partial module \(M\). 
		
		In particular, if \(\varepsilon_t\) is invertible in \(A_{par},\) then \(t \bullet \varepsilon_t^{-1} = 1_{A_{par}}\), so it suffices to take \(b = \varepsilon_t^{-1}\).
	\end{proposition}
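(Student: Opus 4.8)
Since $\Xi_M$ is already injective by \cref{cor:Xi}, the task reduces to proving surjectivity. Recall from \cref{propII:Apar_min_dilation} that $(\{A_{par},M\},\Tt,\theta)$ is a minimal dilation of $M$, and that $\Xi_M$ is exactly the canonical comparison map $\overline{M}\to\{A_{par},M\}$ furnished by \cref{prop:universal_minimal}. That proposition asserts that such a map is bijective if and only if the target dilation is \emph{proper}. Hence it suffices to show that, under the hypothesis $t\bullet b=1_{A_{par}}$, the minimal dilation $\{A_{par},M\}$ is proper, i.e.\ that it is generated as an $H$-module by $\Tt(\{A_{par},M\})=\theta(M)$.

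To make this concrete I would transport the problem along the isomorphism $\{A_{par},M\}\cong{_{(H)}\Hom(H,M)}$ of \cref{cor:iso_Psi}, under which $\theta(m)$ corresponds to $\varphi(m)$ and the submodule $\overline{M}$ corresponds to $H\rightharpoonup\varphi(M)$. Properness then becomes the concrete assertion that every partially $H$-linear map $g:H\to M$ can be written as $g(x)=\sum_i (xh_i)\bullet m_i$ for finitely many $h_i\in H$ and $m_i\in M$; equivalently, $g\in H\rightharpoonup\varphi(M)$. The plan is to write down this preimage explicitly with the help of $t$ and $b$. Guided by the group case $t=\sum_s s$, where the reconstruction reads $g(x)=\sum_s (xs^{-1})\bullet(b\cdot g(s))$, I propose the candidate
\[\Phi:=S(t_{(2)})\rightharpoonup\varphi\big(b\cdot g(t_{(1)})\big)\in\overline{M},\]
a finite sum that lies in $H\rightharpoonup\varphi(M)$ by construction, and then verify that $\Phi=g$. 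The verification rests on four ingredients: the commutation relation $[h]a=(h_{(1)}\bullet a)[h_{(2)}]$ in $H_{par}$ for $a\in A_{par}$ (a consequence of \cite[Lemma 4.7]{ABVparreps}), which moves the $A_{par}$-part past a bracket; the partial-linearity identities \eqref{eq:Psibar_image} and \eqref{eq:Psibar_image2} for $g$; the hypothesis $t\bullet b=1_{A_{par}}$; and the defining identity of a left integral in its coproduct form $h_{(1)}t_{(1)}\otimes h_{(2)}t_{(2)}=\epsilon(h)\,t_{(1)}\otimes t_{(2)}$.

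The heart of the computation — and the step I expect to be the main obstacle — is precisely the passage from a \emph{left-shifted} expression for $g$ to the \emph{right-shifted} form required for membership in $\overline{M}$. Indeed, one can easily obtain $g(x)=t_{(1)}\bullet\big(b\cdot g(S(t_{(2)})x)\big)$, but this holds tautologically for any $b$ with $t\bullet b=1_{A_{par}}$, since it merely re-expresses $g(x)=1_{A_{par}}\cdot g(x)$; it therefore carries no information about membership in $\overline{M}$. The genuine content lies in reindexing this into the form $\Phi$, and this reindexing is not formal: it is exactly here that the left-integral identity becomes indispensable, as it is what collapses the otherwise entangled coproduct legs of $t$ (the $S(t_{(2)})$-factors against the $t_{(1)}$-factors) once the commutation relation has been applied. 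Establishing $\Phi=g$ yields surjectivity, hence bijectivity, of $\Xi_M$.

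For the closing assertion I would record the short computation showing $t\bullet\varepsilon_t^{-1}=1_{A_{par}}$ whenever $\varepsilon_t$ is invertible: writing $t\bullet\varepsilon_t^{-1}=(t_{(1)}\bullet\varepsilon_t^{-1})\varepsilon_{t_{(2)}}$ by the commutation relation and simplifying by means of the integral property together with \cite[Lemma 4.7]{ABVparreps}, so that $b=\varepsilon_t^{-1}$ indeed satisfies the hypothesis of the proposition, and the general statement applies.
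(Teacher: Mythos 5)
Your overall strategy coincides with the paper's: injectivity comes from \cref{cor:Xi}, and surjectivity is to be proved by exhibiting an explicit preimage built from \(t\) and \(b\); the reformulation via \cref{prop:universal_minimal} (bijectivity of \(\Xi_M\) \(\Leftrightarrow\) properness of the minimal dilation \(\{A_{par},M\}\)) and the transport along \(\Psi\) to partially linear maps are both legitimate. The gap is that the verification \(\Phi=g\) --- which you yourself single out as ``the main obstacle'' --- is never carried out, and with your candidate it cannot be carried out as announced, because the antipode sits on the wrong tensor leg. Writing \(f(a\otimes k)=a\cdot g(k)\) for the morphism of partial modules attached to \(g\) by \cref{prop:Psibar}, evaluating your \(\Phi=S(t_{(2)})\rightharpoonup\varphi\bigl(b\cdot g(t_{(1)})\bigr)\) gives
\[
\Phi(x)=\bigl(xS(t_{(2)})\bigr)\bullet f\bigl(b\otimes t_{(1)}\bigr)
=f\bigl(x_{(1)}S(t_{(3)})\bullet b\,\otimes\,x_{(2)}S(t_{(2)})t_{(1)}\bigr),
\]
and the product \(S(t_{(2)})t_{(1)}\) collapses neither by the antipode axiom (which needs \(S(t_{(1)})t_{(2)}\) or \(t_{(1)}S(t_{(2)})\)) nor by the left-integral identity \(t_{(1)}\otimes ht_{(2)}=S(h)t_{(1)}\otimes t_{(2)}\), which moves an \emph{external} element across \(\Delta(t)\) but says nothing about legs of \(t\) multiplied against each other. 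Your formula agrees with the correct one in the group case only because there \(t_{(1)}\otimes S(t_{(2)})=\sum_s s\otimes s^{-1}\) is invariant under the flip (relabel \(s\mapsto s^{-1}\)); for a general Hopf algebra \(t_{(1)}\otimes S(t_{(2)})\) and \(S(t_{(2)})\otimes t_{(1)}\) are different tensors (already in Sweedler's \(H_4\)), and identifying them under the hypothesis would require, beyond the semisimplicity, unimodularity and \(S(t)=t\) that the hypothesis does force, also \(S^2=\id\), i.e.\ Larson--Radford in characteristic zero --- whereas the proposition is stated, and the paper's proof works, over an arbitrary field.

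The repair is to put the antipode on the argument leg. Given \(f\in\{A_{par},M\}\), the paper takes \(y=t_{(1)}\rightharpoonup\varphi\bigl(f(b\otimes S(t_{(2)}))\bigr)\in\overline{M}\) and checks in three lines that \(\Psi\Xi_M(y)=\Psi(f)\):
\[
\Psi\Xi_M(y)(h)=ht_{(1)}\bullet f\bigl(b\otimes S(t_{(2)})\bigr)
=f\bigl(h_{(1)}t_{(1)}\bullet b\otimes h_{(2)}t_{(2)}S(t_{(3)})\bigr)
=f\bigl(h_{(1)}t\bullet b\otimes h_{(2)}\bigr)
=f\bigl(t\bullet b\otimes h\bigr)=\Psi(f)(h),
\]
using successively the \(H_{par}\)-linearity of \(f\), coassociativity with the antipode axiom \(t_{(2)}S(t_{(3)})=\epsilon(t_{(2)})1_H\), the integral property \(h_{(1)}t=\epsilon(h_{(1)})t\), and the hypothesis \(t\bullet b=1_{A_{par}}\); injectivity of \(\Psi\) then yields \(f=\Xi_M(y)\). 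Note that the mechanism is the opposite of what you predicted: the entangled legs of \(t\) are disentangled by coassociativity and the antipode alone, while the left-integral property serves to absorb the coproduct leg \(h_{(1)}\) of the \emph{external} variable. Your sketch of the final assertion (\(t\bullet\varepsilon_t^{-1}=1_{A_{par}}\) when \(\varepsilon_t\) is invertible) is completable essentially as you indicate --- the paper expands \(\varepsilon_t=t\bullet(\varepsilon_t^{-1}\varepsilon_t)\) and cancels the invertible \(\varepsilon_t\) --- but as the surjectivity argument stands, the proposal is not a proof.
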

	\begin{proof}
		In view of \cref{cor:Xi}, we only need to show the surjectivity. Take \(f \in \{A_{par}, M\}\). Consider
		\[y = t_{(1)} \rightharpoonup \varphi(f(b \otimes S(t_{(2)}))) \in \overline{M}.\]
		Then, for all $h\in H$
		\begin{align*}
			\Psi\Xi_M (y)(h) &= \Xi_M (y)(1_{A_{par}} \otimes h) = ht_{(1)} \bullet f(b\otimes S(t_{(2)})) \\
			&= f( h_{(1)}t_{(1)} \bullet b \otimes h_{(2)}t_{(2)} S(t_{(3)})) = f( h_{(1)}t \bullet b \otimes h_{(2)}) \\
			&= f(t \bullet b \otimes h) = f(1_{A_{par}} \otimes h) = \Psi(f)(h).
		\end{align*}
		The injectivity of \(\Psi\) (\cref{prop:Psibar}) implies now that \(f = \Xi_M (y)\).
		
		Suppose now that \(\varepsilon_t\) is invertible in \(A_{par}\) (then also \(\epsilon(t)\) is invertible in \(k\), so \(H\) is necessarily semisimple). Then	
		\begin{align*}
			\varepsilon_t = t \bullet 1_{A_{par}} &= (t_{(1)} \bullet \varepsilon_t^{-1})(t_{(2)} \bullet \varepsilon_t) = (t_{(1)} \bullet \varepsilon_t^{-1}) \varepsilon_{t_{(2)}} \varepsilon_{t_{(3)}t} = (t_{(1)} \bullet \varepsilon_t^{-1}) \varepsilon_{t_{(2)}} \varepsilon_{t} \\ &= (t_{(1)} \bullet \varepsilon_t^{-1}) (t_{(2)} \bullet 1_{A_{par}}) \varepsilon_{t} 
			= (t \bullet \varepsilon_t^{-1})\varepsilon_{t} 
		\end{align*} 
		so that \(t \bullet \varepsilon_t^{-1} = 1_{A_{par}}\) because \(\varepsilon_t\) is invertible. Hence we can take \(b = \varepsilon_t^{-1}\) and conclude that in this case \(\Xi\) is an isomorphism.
	\end{proof}
	
	Next, we show that the sufficient condition obtained in \cref{le:Xi_iso_b} holds for finite group algebras over a field of characteristic zero.
	
	\begin{lemma}
		\label{le:b}
		Let \(G\) be a finite group of cardinality \(n\) and let \(k\) be a field of characteristic zero. Let \(t = \frac{1}{n} \sum_{g \in G} g \in kG\) the normalized integral. Denote by \(\mathcal{X}_i\) the set of subsets of \(G \setminus \{1_G\}\) of cardinality \(i\). Then
		\begin{equation}
			\label{eqII:b_group}
			\varepsilon_t^{-1} = n\sum_{i = 0}^{n - 1} \frac{(-1)^i}{i + 1} \sum_{X \in \mathcal{X}_i} \prod_{g \in X}\varepsilon_g = n \left(1_{A_{par}} - \frac{1}{2} \sum_{g \in G \setminus \{1\}} \varepsilon_g + \cdots\right) \in A_{par}(kG). 
		\end{equation}
	\end{lemma}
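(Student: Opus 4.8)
The plan is to compute $\varepsilon_t$ explicitly, reduce the statement to a single identity among the commuting idempotents $\varepsilon_g$, and verify that identity inside the universal algebra of commuting idempotents by evaluating at its characters. I would first unwind the definition of $\varepsilon_t$: since $h \mapsto \varepsilon_h = [h_{(1)}][S(h_{(2)})]$ is $k$-linear and $\Delta(g) = g \otimes g$, $S(g) = g^{-1}$ for $g \in G$, we obtain $\varepsilon_t = \frac{1}{n}\sum_{g \in G}[g][g^{-1}] = \frac{1}{n}\sum_{g \in G}\varepsilon_g$. As $\varepsilon_{1_G} = [1_G][1_G] = 1_{A_{par}}$, this gives $\varepsilon_t = \frac{1}{n}(1_{A_{par}} + S)$, where $S \coloneqq \sum_{g \in G \setminus \{1\}}\varepsilon_g$. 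Writing $w \coloneqq \sum_{i=0}^{n-1}\frac{(-1)^i}{i+1}\sum_{X \in \mathcal{X}_i}\prod_{g \in X}\varepsilon_g$, it thus suffices to show that $(1_{A_{par}} + S)\,w = 1_{A_{par}}$; the asserted formula is then $\varepsilon_t^{-1} = n\,w$, and a one-sided inverse is enough since the subalgebra generated by the $\varepsilon_g$ is commutative.

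By \cref{ex:group} the family $\{\varepsilon_g\}_{g \in G \setminus \{1\}}$ consists of pairwise commuting idempotents, so there is an algebra homomorphism $\phi : R \to A_{par}$, $e_g \mapsto \varepsilon_g$, from the universal commutative algebra on commuting idempotents $R = k[e_g : g \in G \setminus \{1\}]/(e_g^2 - e_g)$. Denoting by $\tilde S$ and $\tilde w$ the evident preimages of $S$ and $w$ in $R$, it is enough to prove $(1 + \tilde S)\tilde w = 1$ in $R$ and apply $\phi$. Now $R \cong \mathrm{Fun}(\mathcal{P}(G \setminus \{1\}), k)$: every algebra map $R \to k$ sends each $e_g$ to $0$ or $1$ and is therefore indexed by a subset $A \subseteq G \setminus \{1\}$, with $e_g \mapsto 1$ iff $g \in A$. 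Since $R$ is (isomorphic to) a finite product of copies of $k$, it suffices to check the identity after evaluation at each such $A$.

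Fix $A \subseteq G \setminus \{1\}$ and set $m = |A|$. Evaluation at $A$ sends $\prod_{g \in X}e_g$ to $1$ precisely when $X \subseteq A$, so $1 + \tilde S \mapsto 1 + m$ while
\[
\tilde w \longmapsto \sum_{X \subseteq A}\frac{(-1)^{|X|}}{|X|+1} = \sum_{i=0}^{m}\binom{m}{i}\frac{(-1)^i}{i+1} = \frac{1}{m+1},
\]
the last equality being the elementary identity obtained, e.g., from $\int_0^1 (1-x)^m\,dx = \frac{1}{m+1}$. The product of these two values is $1$, so $(1 + \tilde S)\tilde w = 1$ holds at every character and hence in $R$; applying $\phi$ yields $(1_{A_{par}} + S)\,w = 1_{A_{par}}$ and therefore $\varepsilon_t^{-1} = n\,w$. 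Characteristic zero enters exactly here, guaranteeing that the denominators $i+1$ (equivalently $m+1 = 1 + |A| \le n$) are invertible in $k$.

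I do not expect a genuine obstacle: the computation is essentially forced once one recognizes $\varepsilon_t$ as $\frac{1}{n}(1_{A_{par}} + S)$. The two points deserving care are the reduction to the universal idempotent algebra $R$, which legitimizes checking the relation on characters and sidesteps any hidden relations among the $\varepsilon_g$, and the binomial identity above; tracking which subsets $X$ survive evaluation at $A$ is the most error-prone step, but it is entirely routine.
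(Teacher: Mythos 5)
Your proof is correct, and it takes a genuinely different route from the paper's. The paper proceeds by direct computation inside $A_{par}(kG)$: it expands $\varepsilon_t b = \frac{1}{n}\sum_{h \in G}\varepsilon_h b$ with $b = nw$, splits each product $\varepsilon_h\prod_{g \in X}\varepsilon_g$ according to whether $h \in X$ or not (via auxiliary sets $\mathcal{Y}_{h,i}$ of subsets containing $h$), observes that in the cross terms every monomial $\prod_{g \in X}\varepsilon_g$ occurs exactly $|X|$ times, and finishes with the coefficient identity $\frac{1}{i+1}+\frac{i}{i+1}-1=0$. You instead push the problem into the universal commutative algebra $R$ on commuting idempotents, identify $R$ with the algebra of $k$-valued functions on $\mathcal{P}(G\setminus\{1_G\})$, and verify $(1+\tilde S)\tilde w = 1$ pointwise on characters, where it collapses to the binomial identity $\sum_{i=0}^{m}\binom{m}{i}\frac{(-1)^i}{i+1}=\frac{1}{m+1}$. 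Your logical direction is the right one: you prove the relation in $R$ and transport it along $\phi$, so you never need injectivity of $\phi$, i.e.\ no knowledge of which relations actually hold among the $\varepsilon_g$ in $A_{par}(kG)$ beyond commutativity and idempotence (which is exactly what \cref{ex:group} supplies). What your argument buys is conceptual clarity: it explains the shape of the formula ($w$ is the element whose value at the character supported on $A$ is $\frac{1}{|A|+1}$, the reciprocal of the value $\frac{|A|+1}{n}$ of $\varepsilon_t$ up to the factor $n$), it makes the role of characteristic zero transparent, and it replaces error-prone double-sum bookkeeping by a one-line integral/binomial identity. What the paper's computation buys is self-containedness: it stays entirely within $A_{par}(kG)$, with no detour through the spectrum of an auxiliary algebra.
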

	\begin{proof}
		The elements \(\varepsilon_g \in A_{par}(kG)\) are commuting idempotents (see \cref{ex:group}), so the expression \eqref{eqII:b_group} is unambiguous. For any \(h \in G \setminus \{1_G\}\), denote by \(\mathcal{Y}_{h,i}\) the subset of \(\mathcal{X}_i\) consisting of sets that contain \(h\). Remark that \(\mathcal{Y}_{h, 0}\) and \(\mathcal{Y}_{h, n}\) are empty. Let \(b\) be the element \(n\sum_{i = 0}^{n - 1} \frac{(-1)^i}{i + 1} \sum_{X \in \mathcal{X}_i} \prod_{g \in X}\varepsilon_g\). Then
		\begingroup
		\allowdisplaybreaks
		\begin{align*}
			\varepsilon_t b = \frac{1}{n} \sum_{h \in G} \varepsilon_h b &= \sum_{i = 0}^{n - 1} \frac{(-1)^i}{i + 1} \sum_{X \in \mathcal{X}_i} \prod_{g \in X}\varepsilon_g + \sum_{h \in G \setminus \{1_G\}}  \sum_{i = 0}^{n - 1} \frac{(-1)^i}{i + 1} \sum_{X \in \mathcal{Y}_{h, i}} \prod_{g \in X}\varepsilon_g  \\
			&\quad + \sum_{h \in G \setminus \{1_G\}} \sum_{i = 0}^{n - 1} \frac{(-1)^i}{i + 1} \sum_{X \in \mathcal{Y}_{h, i + 1}} \prod_{g \in X}\varepsilon_g \\
			&\overset{(*)}{=} \sum_{i = 0}^{n - 1} \frac{(-1)^i}{i + 1} \sum_{X \in \mathcal{X}_i} \prod_{g \in X}\varepsilon_g + \sum_{i = 1}^{n - 1} \frac{i(-1)^i}{i + 1} \sum_{X \in \mathcal{X}_{i}} \prod_{g \in X}\varepsilon_g 
			+ \sum_{i = 0}^{n - 1} \frac{(i + 1)(-1)^{i}}{i + 1} \sum_{X \in \mathcal{X}_{i + 1}} \prod_{g \in X}\varepsilon_g \\
			&= \sum_{i = 0}^{n - 1} \frac{(-1)^i}{i + 1} \sum_{X \in \mathcal{X}_i} \prod_{g \in X}\varepsilon_g + \sum_{i = 1}^{n - 1} \frac{i(-1)^i}{i + 1} \sum_{X \in \mathcal{X}_{i}} \prod_{g \in X}\varepsilon_g 
			+ \sum_{i = 1}^{n - 1} (-1)^{i-1}\sum_{X \in \mathcal{X}_{i}} \prod_{g \in X}\varepsilon_g \\
			&= 1_{A_{par}} + \sum_{i = 1}^{n - 1} (-1)^i\left( \frac{1}{i + 1} + \frac{i}{i + 1} - 1\right)\sum_{X \in \mathcal{X}_{i}} \prod_{g \in X}\varepsilon_g \\
			&= 1_{A_{par}}.
		\end{align*}
		\endgroup
		Equality \((*)\) holds because in the second and third sum of the expression, each subset \(X\) of \(G \setminus \{1_G\}\) appears exactly \(|X|\) times. Since \(A_{par}(kG)\) is commutative, the above computation shows that \(b\) is the inverse of \(\varepsilon_t\).
	\end{proof}

	\begin{lemma}
		\label{le:restriction_coradical}
		Let \(H\) be a pointed Hopf algebra and take \(g : H \to M\) a partially linear map (i.\,e.\ satisfying \eqref{eq:Psibar_image}). Then \(g\) is completely determined by its restriction to the coradical \(H_0\).
	\end{lemma}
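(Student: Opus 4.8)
The plan is to reduce the statement to the assertion that a partially $H$-linear map $g$ with $g|_{H_0}=0$ must vanish, and then to induct along the coradical filtration $H_0\subseteq H_1\subseteq\cdots$ (whose union is $H$). This reduction is harmless: the difference of two partially linear maps that agree on $H_0$ is again partially linear (immediate from the $k$-linearity of \eqref{eq:Psibar_image}) and vanishes on $H_0$. The inductive claim I would prove is that $g|_{H_{n-1}}=0$ forces $g|_{H_n}=0$, the case $n=0$ being the hypothesis; since $\bigcup_n H_n=H$, this yields $g=0$.

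For the inductive step I would invoke the Taft--Wilson theorem. As $H$ is pointed with grouplikes $G$ (so $H_0=kG$), one has $H_n=H_{n-1}+\sum_{\sigma,\tau\in G}P^n_{\sigma,\tau}$, where each $h\in P^n_{\sigma,\tau}$ satisfies $\Delta(h)-\sigma\otimes h-h\otimes\tau\in H_{n-1}\otimes H_{n-1}$. By linearity it suffices to show $g(h)=0$ for $h\in P^n_{\sigma,\tau}$. The idea is to feed a cleverly chosen element into the second partial-linearity identity \eqref{eq:Psibar_image2} (available by \cref{rem:2nd_condition}) so that exactly one summand reproduces $g(h)$ while every other summand is $g$ evaluated at a grouplike or at an element of $H_{n-1}$, hence zero.

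Concretely I would set $h^\circ=h\sigma^{-1}$. Multiplying the Taft--Wilson congruence by $\sigma^{-1}\otimes\sigma^{-1}$ gives $\Delta(h^\circ)=1_H\otimes h^\circ+h^\circ\otimes\tau\sigma^{-1}+w$ with $w\in H_{n-1}\otimes H_{n-1}$; the purpose of the twist by $\sigma^{-1}$ is to normalize the ``left grouplike'' to $1_H$, where the partial action is the identity by \ref{PR1}. Applying \eqref{eq:Psibar_image2} to $h^\circ$ with $x=\sigma$, the $1_H\otimes h^\circ$ summand yields $S(1_H)\bullet g(h^\circ\sigma)=g(h)$, the $h^\circ\otimes\tau\sigma^{-1}$ summand yields $S(h^\circ)\bullet g(\tau)$, and the $w$-terms yield $g$ evaluated on $H_{n-1}$ (their right tensor legs lie in $H_{n-1}$, and multiplying by the grouplike $\sigma$ keeps them there). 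The right-hand side of \eqref{eq:Psibar_image2} is a sum of terms $S(h^\circ_{(1)})\bullet(h^\circ_{(2)}\bullet g(\sigma))$, so it vanishes since $g(\sigma)=0$. As $g(\tau)=0$ and $g$ kills $H_{n-1}$ by induction, the identity collapses to $g(h)=0$.

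The main obstacle is exactly the presence of a nontrivial left grouplike $\sigma$: applying \eqref{eq:Psibar_image2} directly to $h$ only gives $S(\sigma)\bullet g(h)=\sigma^{-1}\bullet g(h)=0$, and because partial actions are not invertible one cannot cancel $\sigma^{-1}\bullet(-)$ to recover $g(h)$. Pre-twisting by $\sigma^{-1}$ to make the left grouplike $1_H$ is the device that circumvents this. I would finally verify the routine bookkeeping that multiplication by grouplikes preserves the filtration degree, so that $h^\circ\in H_n$ and the $w$-contributions genuinely stay in $H_{n-1}$, but I expect no difficulty there.
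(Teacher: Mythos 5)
Your proposal is correct and takes essentially the same route as the paper's proof: the same reduction to showing that a partially linear map vanishing on \(H_0\) must be zero, the same induction along the coradical filtration, the same decomposition of \(H_n\) (your Taft--Wilson statement is exactly the paper's citation of Radford's Proposition 4.3.1), the same twist \(h \mapsto h\sigma^{-1}\) to normalize the left grouplike to \(1_H\), and the same evaluation of \eqref{eq:Psibar_image2} at the grouplike \(\sigma\) (the paper's choice \(k = c\)). The only differences are notational (\(\sigma, \tau\) versus \(c, d\)) and that the paper writes out the rearranged identity for \(g(x)\) explicitly before letting the \(H_{n-1}\)-terms vanish, while you observe directly that the right-hand side and the remaining terms are killed by the inductive hypothesis.
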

	\begin{proof}
		It suffices to show that \(g|_{H_0} = 0\) implies that \(g = 0\). So, suppose that \(g|_{H_0} = 0\). 
		Consider the coradical filtration \(\{H_n\}_{n \in \NN}\), where \(H_0\) is the coradical of \(H\) and 
		\[H_n = \Delta^{-1}(H \otimes H_0 + H_{n - 1} \otimes H).\] We will show by induction that \(g|_{H_n} = 0\) for all \(n \in \NN\), which will conclude the proof that \(g = 0\) because \(H = \bigcup_{n \in \NN} H_n\).
		
		By \cite[Equation 4.5 and Proposition 4.3.1]{radford}, \(H_n\) is generated as a vector space by elements \(x\) for which
		\[\Delta(x) = c \otimes x + x \otimes d + \sum_i y_i \otimes z_i,\]
		where \(c\) and \(d\) are grouplike, and \(\sum_i y_i \otimes z_i \in H_{n - 1} \otimes H_{n - 1}\). Then
		\[\Delta(xc^{-1}) = 1_H \otimes xc^{-1} + xc^{-1} \otimes dc^{-1} + \sum_i y_ic^{-1} \otimes  z_ic^{-1}.\]
		By \cref{rem:2nd_condition}, \(g\) also satisfies condition \eqref{eq:Psibar_image2}, which for \(h = xc^{-1}\) becomes
		\begin{gather*}
			g(xc^{-1}k) + S(xc^{-1}) \bullet g(dc^{-1}k) + \sum_i S(y_ic^{-1}) \bullet g(z_ic^{-1}k) \\ = xc^{-1} \bullet g(k) + S(xc^{-1}) \bullet (dc^{-1} \bullet g(k)) + \sum_i S(y_ic^{-1}) \bullet (z_ic^{-1} \bullet g(k)),
		\end{gather*}
		and by taking \(k = c\) we obtain
		\begin{align*}
			g(x) &= xc^{-1} \bullet g(c) + S(xc^{-1}) \bullet (dc^{-1} \bullet g(c)) + \sum_i S(y_ic^{-1}) \bullet (z_ic^{-1} \bullet g(c)) \\ &\quad - S(xc^{-1}) \bullet g(d) - \sum_i S(y_ic^{-1}) \bullet g(z_i).
		\end{align*}
		Since \(c, d, z_i \in H_{n - 1}\), this shows that \(g|_{H_{n - 1}} = 0 \Rightarrow g|_{H_{n}} = 0,\) and the proof by induction is finished.
	\end{proof}
	
	\begin{theorem}
		\label{th:Xi_iso_pointed}
		Let \(H\) be a pointed Hopf algebra with finite group of grouplikes \(G\) over a field of characteristic zero. Then \(\Xi\) is a natural isomorphism, so \(D \cong \{A_{par}, -\}\).
	\end{theorem}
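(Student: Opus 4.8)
The plan is to reduce the statement to the group case, which is already settled by \cref{le:b} and \cref{le:Xi_iso_b}, and to transport that result along the coradical inclusion using \cref{le:restriction_coradical}. Since \cref{cor:Xi} already gives that each component $\Xi_M$ is injective and that $\Xi$ is natural, the only thing left to prove is that $\Xi_M$ is surjective for every partial module $M$; this then yields the natural isomorphism $D \cong \{A_{par}, -\}$.

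First I would rephrase everything in terms of partially $H$-linear maps. By \cref{cor:iso_Psi}, giving an element $f \in \{A_{par}, M\}$ is the same as giving the partially $H$-linear map $g = \Psi(f) : H \to M$, $g(k) = f(1_{A_{par}} \otimes k)$. Under this identification, the formula in \cref{cor:Xi} shows that the image of $\Xi_M$ consists exactly of those maps of the form $k \mapsto \sum_i kh_i \bullet m_i$ with $h_i \in H$ and $m_i \in M$, namely the functions attached to elements $\sum_i h_i \rightharpoonup \varphi(m_i)$ of $\overline{M}$. So surjectivity of $\Xi_M$ is equivalent to the assertion that \emph{every} partially $H$-linear map $H \to M$ has this shape.

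The decisive reduction is to the coradical $H_0 = kG$, which for a pointed Hopf algebra is the Hopf subalgebra generated by the grouplikes. Restricting the partial action of $H$ to $kG$ makes $M$ a partial $kG$-module, and I would first verify that $g|_{kG} : kG \to M$ is partially $kG$-linear: the defining identity \eqref{eq:Psibar_image} is linear in $h$, so it suffices to test it on grouplikes $c \in G$, where $\Delta(c) = c \otimes c$ and $S(c) = c^{-1} \in kG$, and there it reduces precisely to partial $kG$-linearity. Because $G$ is finite and $\cha k = 0$, \cref{le:b} supplies an integral $t \in kG$ with $\varepsilon_t$ invertible, so \cref{le:Xi_iso_b} applied to the Hopf algebra $kG$ shows that $\Xi^{kG}$ is an isomorphism. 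Surjectivity of $\Xi^{kG}_M$ says exactly that $g|_{kG} = \bigl(k \mapsto \sum_i kg_i \bullet m_i\bigr)$ for suitable $g_i \in kG$ and $m_i \in M$.

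With these $g_i, m_i$ in hand I would set $y = \sum_i g_i \rightharpoonup \varphi(m_i) \in \overline{M}$ (now in the standard dilation over $H$, legitimate since $g_i \in kG \subseteq H$) and form the partially $H$-linear map $\tilde g = \Psi\Xi_M(y)$, that is $\tilde g(k) = \sum_i kg_i \bullet m_i$ for $k \in H$. By construction $g$ and $\tilde g$ agree on $kG = H_0$, and here the pointedness enters decisively: by \cref{le:restriction_coradical} a partially $H$-linear map is determined by its restriction to $H_0$, so $\tilde g = g$, i.e.\ $\Psi\Xi_M(y) = \Psi(f)$. Since $\Psi$ is injective (\cref{prop:Psibar}), this forces $\Xi_M(y) = f$, establishing surjectivity; combined with injectivity and naturality from \cref{cor:Xi}, $\Xi$ is a natural isomorphism. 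I expect the main obstacle to lie not in any single computation but in correctly matching the partial structures over $kG$ and over $H$ across the inclusion $kG \hookrightarrow H$: confirming that restriction of partially linear maps behaves well, and that the data $\sum_i kg_i \bullet m_i$ extracted from the group-case isomorphism, with $g_i$ only grouplike, genuinely assemble into an element of the full $H$-dilation $\overline{M}$.
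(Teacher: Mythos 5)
Your proposal is correct and takes essentially the same route as the paper: both establish surjectivity of \(\Xi_M\) by settling the problem on the coradical \(H_0 = kG\) via the integral of \cref{le:b} and \cref{le:Xi_iso_b}, and then extend to all of \(H\) using \cref{le:restriction_coradical} together with the injectivity of \(\Psi\) (\cref{prop:Psibar}) and of \(\Xi_M\) (\cref{cor:Xi}). The only difference is in execution, not in ideas: the paper transports the element \(b = \varepsilon_t^{-1}\) along the map \(\iota : A_{par}(kG) \to A_{par}(H)\) and writes down the preimage \(y = t_{(1)} \rightharpoonup \varphi(f(\iota(b) \otimes S(t_{(2)})))\) directly, re-running the computation from the proof of \cref{le:Xi_iso_b}, whereas you invoke \cref{le:Xi_iso_b} as a black box for the restricted partial \(kG\)-module and then lift the resulting finite expression \(\sum_i g_i \rightharpoonup \varphi(m_i)\) into \(\overline{M}\); both constructions produce an element of \(\overline{M}\) agreeing with \(f\) on the coradical, after which the arguments coincide.
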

	\begin{proof}
		Let \(t = \frac{1}{n}\sum_{g \in G} g\), which is a left integral in the coradical \(H_0 = kG\) of \(H\). Let \(b = \varepsilon_t^{-1} \in A_{par}(kG)\) be the element defined in \cref{le:b}. Since \(kG\) is a Hopf subalgebra of \(H\), there is an obvious morphism of partial $\Bbbk G$-module algebras
		\[\iota : A_{par}(kG) \to A_{par}(H), \quad \varepsilon_h \mapsto \varepsilon_h.\] Now \(t \bullet \iota(b) = \iota(t \bullet b) = \iota(1_{A_{par}(kG)}) = 1_{A_{par}(H)}\) by \cref{le:Xi_iso_b}. Let \(M\) be a partial \(H\)-module and take \(f \in \{A_{par}, M\}\). Put
		\[y = t_{(1)} \rightharpoonup \varphi(f(\iota(b) \otimes S(t_{(2)}))) \in \overline{M}.\]
		Then for all \(h \in H_0 = kG,\)
		\[\Psi\Xi_M(y)(h) = \Psi(f)(h)\]
		by the proof of \cref{le:Xi_iso_b}, and by \cref{le:restriction_coradical}, this implies that \(\Psi\Xi_M(y) = \Psi(f)\). Since \(\Psi\) is injective, \(f = \Xi_M(y)\). We conclude that \(\Xi_M\) is surjective, and by \cref{cor:Xi}, it is bijective. 
	\end{proof}
	
	\begin{corollary}\label{D_exato}
		Let \(H\) be a pointed Hopf algebra with finite group of grouplikes \(G\) over a field of characteristic zero. Then the dilation functor \(D\) is exact. Moreover, \(D\) has both a left and right adjoint, and 
		\[D \cong \{A_{par}, -\} \cong \overline{H_{par}} \otimes_{H_{par}} -.\]
	\end{corollary}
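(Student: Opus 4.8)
The plan is to assemble the statement from the isomorphism already established in \cref{th:Xi_iso_pointed} together with a structural analysis of \(D\) as a colimit-preserving functor. First, \cref{th:Xi_iso_pointed} gives the natural isomorphism \(D \cong \{A_{par}, -\}\) outright. The left adjoint is then immediate: the adjunction \eqref{eq:adjunctionM-} specialized to \(M = A_{par}\) reads \(A_{par} \otimes - \dashv \{A_{par}, -\}\), so transporting along \(\Xi\) exhibits \(A_{par} \otimes -\) as a left adjoint of \(D\). It thus remains to prove exactness, to produce a right adjoint, and to identify \(D\) with \(\overline{H_{par}} \otimes_{H_{par}} -\).

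For exactness I would argue through the equivalence \(\overline{D} : \HPMod \xrightarrow{\ \sim\ } \mathbb{T}(\HMod)\), under which \(D\) becomes the forgetful functor \(U : (N, T) \mapsto N\). The key point is that kernels and cokernels in \(\mathbb{T}(\HMod)\) are computed in \(\HMod\): if \(f : (N_1, T_1) \to (N_2, T_2)\) intertwines the projections, then \(\ker f\) and \(\im f\) are \(T\)-invariant \(H\)-submodules, and the induced projection continues to satisfy the c-condition \eqref{eq:ccondition} on these subquotients. Hence \(\mathbb{T}(\HMod)\) is abelian and \(U\) both preserves and reflects short exact sequences, so \(U\), and therefore \(D\), is exact.

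Next I would check that \(D\) preserves coproducts, which is transparent from the explicit construction \(\overline{M} = H \rightharpoonup \varphi(M)\): every element of \(\overline{\bigoplus_i M_i}\) is a finite sum \(\sum_j h_j \rightharpoonup \varphi(m_j)\) with each \(m_j\) finitely supported, hence lies in \(\bigoplus_i \overline{M_i}\), and the reverse inclusion is clear. A right-exact, coproduct-preserving additive functor \({_{H_{par}}}\Mod \to \HMod\) is, by the Eilenberg--Watts theorem, naturally isomorphic to \(D(H_{par}) \otimes_{H_{par}} -\), where \(D(H_{par})\) carries the right \(H_{par}\)-action coming from right multiplication on the regular module. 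Since \(D(H_{par}) = \overline{H_{par}}\) by definition of the dilation functor, this gives \(D \cong \overline{H_{par}} \otimes_{H_{par}} -\), whose right adjoint is the tensor--hom partner \(\HHom(\overline{H_{par}}, -)\), completing the argument.

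The main obstacle I anticipate is the verification underlying the exactness step: one must confirm that the c-condition is genuinely stable under passage to \(T\)-invariant submodules and quotients, so that \(\mathbb{T}(\HMod)\) is abelian with exactness detected in \(\HMod\). An alternative route would be to show directly that \(A_{par} \otimes H\) is projective (indeed finitely generated projective) as a left \(H_{par}\)-module, which is precisely what the isomorphism \(\{A_{par}, -\} \cong \overline{H_{par}} \otimes_{H_{par}} -\) forces; but pinning down such generators explicitly seems more delicate than the structural argument above, so I would keep the latter as the primary path.
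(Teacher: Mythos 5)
Your opening moves match the paper: \(D \cong \{A_{par},-\}\) is \cref{th:Xi_iso_pointed}, and transporting the adjunction \eqref{eq:adjunctionM-} along this isomorphism exhibits \(A_{par} \otimes -\) as a left adjoint of \(D\), exactly as in the paper's proof. The gap is your exactness step, and it is genuine: that argument makes no use of the hypotheses on \(H\) (nor of \cref{th:Xi_iso_pointed}), so if it were valid it would prove that \(D\) is exact for \emph{every} Hopf algebra, which is false --- exactness is precisely the non-automatic ingredient the hypotheses are buying. The flaw lies in how you use the equivalence \(\overline{D}\). Despite the loose phrasing in Section 1, \(\overline{D}\) is \emph{not} an equivalence between \(\HPMod\) and all of \(\mathbb{T}(\HMod)\): by uniqueness of the standard dilation (cf.\ \cref{prop:universal_minimal}), it is an equivalence onto the full subcategory of \emph{proper and minimal} pairs only. (For any nonzero \(H\)-module \(X\), the pair \((X,0)\) lies in \(\mathbb{T}(\HMod)\) and restricts to the zero partial module, yet it is not isomorphic to \((0,0)\); the restriction functor is not even faithful on such objects.) Your verification that \(\mathbb{T}(\HMod)\) is abelian with kernels and cokernels computed in \(\HMod\) is correct, but the essential image of \(\overline{D}\) is not closed under those kernels and cokernels, so exactness of the forgetful functor does not descend to \(D\).

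Concretely, for a morphism \(f : M \to N\) of partial modules, \(\overline{f}\) acts on \(\overline{M} \subseteq \Hom_k(H,M)\) by postcomposition with \(f\), so \(\ker \overline{f} = \{x \in \overline{M} : x(H) \subseteq \ker f\}\), which contains \(\overline{\ker f} = H \rightharpoonup \varphi(\ker f)\) but in general strictly; equality for all \(f\) is exactly kernel preservation, which the paper obtains from the fact that \(\{A_{par},-\}\) is a right adjoint. The gap propagates to your Eilenberg--Watts step, which needs right exactness as an input (your coproduct-preservation argument, by contrast, is correct, and in fact holds unconditionally). The paper's route after kernel preservation is to quote \cite[Proposition 5.2]{ABVdilations}, which upgrades it in one stroke to exactness, the existence of a right adjoint, and \(D \cong \overline{H_{par}} \otimes_{H_{par}} -\); your Eilenberg--Watts argument could replace that citation only once right exactness has been established by other means. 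Note that the alternative you dismiss at the end --- proving that \(A_{par} \otimes H\) is projective as an \(H_{par}\)-module, equivalently that \(\{A_{par},-\} = \Hom_{\HPMod}(A_{par} \otimes H, -)\) is exact --- is much closer to the actual content of the statement than the structural argument you kept as the primary path.
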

	\begin{proof}
		By \cref{th:Xi_iso_pointed}, \(D \cong \{A_{par}, -\}\), so the dilation functor is right adjoint to \(A_{par} \otimes - : \HMod \to \HPMod\). In particular, it preserves kernels, so by \cite[Proposition 5.2]{ABVdilations}, \(D\) is exact, has a right adjoint and is isomorphic to \(\overline{H_{par}} \otimes_{H_{par}} -\).
	\end{proof}

	\section{The Hopf algebroid \texorpdfstring{\(H_{glob}\)}{Hglob}} 
	For any partial module \(M\), \(\{A_{par}, M\}\) is not only a global \(H\)-module, but it is also a left module over the \(H\)-module algebra \(\{A_{par}, A_{par}\}\) using the composition \(\circ_{\{AAM\}}\) \eqref{eq:compositionright}: for any \(\chi \in \{A_{par}, A_{par}\}\) and \(\varphi \in \{A_{par}, M\}\),
	\[(\chi \circ_{\{AAM\}} \varphi)(a \otimes h) = \varphi(\chi(a \otimes h_{(1)}) \otimes h_{(2)}).\]
	The action of \(\{A_{par}, A_{par}\}\) on \(\{A_{par}, M\}\) is \(H\)-linear because \(\circ_{\{AAM\}}\) is a morphism in \(\HMod\). This means that \(\{A_{par}, M\}\) is an object in \({_{\{A_{par}, A_{par}\}}}(\HMod)\), the category of left \(\{A_{par}, A_{par}\}\)-modules in the monoidal category of \(H\)-modules. 
	If \(H\) is finite-dimensional and pointed, then \(D \cong \{A_{par}, -\}\) by \cref{th:Xi_iso_pointed}, so in that case the dilations of partial modules can be interpreted as objects in \({_{\{A_{par}, A_{par}\}}}(\HMod)\).
	
	\begin{lemma}
		\label{le:Apar_convolution}
		The algebra structure on \(\{A_{par}, A_{par}\}\) given by the composition coincides with the algebra structure defined by he following convolution product:
		\[
		(f\ast g)(a\otimes h) =f(a \otimes h_{(1)}) g(1_A \otimes h_{(2)}) .
		\]
	\end{lemma}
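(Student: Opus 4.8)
The plan is to evaluate the composition product of \(\{A_{par}, A_{par}\}\) on an arbitrary pair \(\varphi, \chi\) and rewrite it as the stated convolution. Specializing the composition formula \eqref{eq:compositionright} to \(M = N = P = A_{par}\), I would start from
\[(\varphi \circ_{\{AAA\}} \chi)(a \otimes h) = \chi\bigl(\varphi(a \otimes h_{(1)}) \otimes h_{(2)}\bigr)\]
for \(a \in A_{par}\) and \(h \in H\). The goal is to detach the inner factor \(\varphi(a \otimes h_{(1)})\) from the argument of \(\chi\) and recognize it as a left \(A_{par}\)-scalar, so that \(\varphi\) and \(\chi\) become evaluated independently and their values multiplied in \(A_{par}\).

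The key step is to invoke \eqref{eq:AMX}, which says that the left \(A_{par}\)-action on \(A_{par} \otimes H\) touches only the first tensorand: setting \(c = \varphi(a \otimes h_{(1)}) \in A_{par}\), this gives \(c \otimes h_{(2)} = c \cdot (1_A \otimes h_{(2)})\). Since \(\chi\) is a morphism of partial \(H\)-modules and \(A_{par} \subseteq H_{par}\), it is in particular left \(A_{par}\)-linear, so \(\chi(c \cdot (1_A \otimes h_{(2)})) = c \cdot \chi(1_A \otimes h_{(2)})\). Finally, the left \(A_{par}\)-action on the codomain \(A_{par}\) is just left multiplication, because \(A_{par}\) carries the regular \(A_{par}\)-bimodule structure (as recorded in the Remarks following \cref{cor:biactegory}). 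Substituting \(c = \varphi(a \otimes h_{(1)})\) then yields
\[(\varphi \circ_{\{AAA\}} \chi)(a \otimes h) = \varphi(a \otimes h_{(1)})\, \chi(1_A \otimes h_{(2)}),\]
which is precisely \((\varphi \ast \chi)(a \otimes h)\), establishing the claim.

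I do not expect a genuine obstacle here: the whole argument is the single identity above. The only points requiring attention are the two structural facts that must be invoked in the right places, namely that \eqref{eq:AMX} lets one rewrite \(\varphi(a \otimes h_{(1)}) \otimes h_{(2)}\) as an \(A_{par}\)-scalar acting on \(1_A \otimes h_{(2)}\), and that this scalar action on the target \(A_{par}\) is ordinary multiplication. Both are already available in the preceding text, so once they are correctly combined with the \(A_{par}\)-linearity of \(\chi\), the two binary operations agree on every pair \((\varphi, \chi)\) and hence the two algebra structures coincide.
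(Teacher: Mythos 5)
Your proposal is correct and is essentially the paper's own proof: the paper also combines the $A_{par}$-linearity of the second map (as a consequence of $H_{par}$-linearity) with the identity \eqref{eq:AMX} saying $a' \cdot (1_{A_{par}} \otimes h) = a' \otimes h$, merely running the computation from the convolution side to the composition side rather than the reverse. Your explicit remark that the left $A_{par}$-action on the target copy of $A_{par}$ is ordinary multiplication is left implicit in the paper, but it is the same argument.
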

	\begin{proof}
		Consider $f,g \in \{ A_{par} , A_{par} \}$, $a\in A_{par}$ and $h\in H$, then
		\begin{equation*}
			(f*g)(a \otimes h) = f(a \otimes h_{(1)}) g(1_A \otimes h_{(2)})
			= g(f(a \otimes h_{(1)}) \otimes h_{(2)})
			= (f \circ_{\{AAA\}} g)(a \otimes h).
		\end{equation*}
		Here we used the fact that, since \(g\) is \(H_{par}\)-linear, it is in particular \(A_{par}\)-linear. Moreover, \(a' \cdot (1_{A_{par}} \otimes h) = a' \otimes h\) for any \(a' \in A_{par}\) and \(h \in H\) (see \eqref{eq:AMX}).
	\end{proof}
	
	\begin{definition}
		We put \(H_{glob} \coloneqq \{A_{par}, A_{par}\} \# H\). The multiplication on \(H_{glob}\) is given by
		\begin{equation}
			(\varphi \# h)(\psi \# l) = \varphi \circ_{\{AAA\}} (h_{(1)} \rightharpoonup \psi) \# h_{(2)}l.
		\end{equation}
	\end{definition}
	
	The category of left \(H_{glob}\)-modules is isomorphic to \({_{\{A_{par}, A_{par}\}}}(\HMod)\): given an \(H_{glob}\)-module \(Y\), it is an \(H\)-module by \(h \btright y \coloneqq (1_{\{A_{par}, A_{par}\}} \# h) \cdot y\), and 
	\[\varphi \triangleright y \coloneqq (\varphi \# 1_H) \cdot y\]
	defines an \(H\)-linear action of \(\{A_{par}, A_{par}\}\) on \(Y\). Conversely, an object \(Z\) in \({_{\{A_{par}, A_{par}\}}}(\HMod)\) is an \(H_{glob}\)-module via
	\[(\varphi \# h) \cdot z \coloneqq \varphi \triangleright (h \btright z).\]
	
	We will show that if \(H\) is finite-dimensional, then \(H_{glob}\) is a Hopf algebroid with base algebra \(\{A_{par}, A_{par}\}\), using \cite[Theorem 4.1]{BM}. To apply this theorem, we need to show that \(\{A_{par}, A_{par}\}\) is a braided commutative algebra in the category of Yetter-Drinfeld modules \(\HYDH\). First, we need the following lemma, which is a part of \cite[Lemma 7.9.4]{EGNObook}.
	
	\begin{lemma}
		\label{leII:Theta}
		For any finite-dimensional \(H\)-module \(X\) and partial modules \(M\) and \(N,\)
		\[\{M, N \otimes X\} \cong \{M, N\} \otimes X\]
		as left \(H\)-modules.
	\end{lemma}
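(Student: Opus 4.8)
The plan is to prove the isomorphism abstractly, by the Yoneda lemma, feeding in the adjunctions already obtained in this section together with the rigidity of the finite-dimensional module \(X\). Recall that \(\{M,-\}\) is right adjoint to \(M\otimes-\) (the adjunction \eqref{eq:adjunctionM-}) and that \(\HMod\) is closed monoidal. The one additional ingredient I will need is that, for finite-dimensional \(X\), the functor \(-\otimes X\) admits the left dual functor \(-\otimes{}^*X\) as a \emph{left} adjoint — here \({}^*X\) is the left dual of \(X\), which exists because \(S\) is invertible. I will use this both on \(\HMod\), where \(-\otimes{}^*X\dashv-\otimes X\) is the standard rigidity adjunction, and, crucially, on \(\HPMod\).

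First I would set up the partial-module version of this adjunction. The proposition establishing \eqref{eq:adjunction-X} gives \(-\otimes Z\dashv\Hom_k^r(Z,-)\) on \(\HPMod\) for every global module \(Z\). For finite-dimensional \(Z\) a direct check identifies \(\Hom_k^r(Z,M)\cong M\otimes Z^*\) as partial modules: under \(m\otimes\xi\mapsto(z\mapsto\xi(z)\,m)\) the diagonal action \eqref{actegory1} on \(M\otimes Z^*\) matches the action \((h\cdot f)(z)=h_{(1)}\bullet f(S(h_{(2)})\btright z)\) defining \(\Hom_k^r(Z,M)\), where \(Z^*\) carries the right-dual \(H\)-action \((h\cdot\xi)(z)=\xi(S(h)\btright z)\). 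Taking \(Z={}^*X\) and using the canonical rigidity identification \(({}^*X)^*\cong X\), this yields \(\Hom_k^r({}^*X,-)\cong-\otimes X\), so that \eqref{eq:adjunction-X} becomes the desired adjunction \(-\otimes{}^*X\dashv-\otimes X\) on \(\HPMod\).

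With these adjunctions available, for an arbitrary global module \(Y\) I would compute
\begin{align*}
\HHom(Y,\{M,N\otimes X\})
&\cong\Hom_{\HPMod}(M\otimes Y,\,N\otimes X) \\
&\cong\Hom_{\HPMod}\big(M\otimes(Y\otimes{}^*X),\,N\big) \\
&\cong\HHom(Y\otimes{}^*X,\,\{M,N\}) \\
&\cong\HHom(Y,\,\{M,N\}\otimes X),
\end{align*}
where the first and third isomorphisms are the adjunction \(M\otimes-\dashv\{M,-\}\), the second strips \(\otimes X\) off the target using the \(\HPMod\)-adjunction just established, and the last is rigidity in \(\HMod\). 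Each step is natural in \(Y\), so the Yoneda lemma delivers the claimed \(H\)-module isomorphism \(\{M,N\otimes X\}\cong\{M,N\}\otimes X\). Tracking the units of the adjunctions through the chain, one finds that it is induced by the explicit natural map \(f\otimes x\mapsto\big(m\otimes h\mapsto f(m\otimes h_{(1)})\otimes h_{(2)}\btright x\big)\); checking that this lands in \(\Hom_{\HPMod}(M\otimes H,N\otimes X)\) and is \(H\)-linear for \eqref{eq:Hmod_righthom} gives an alternative, purely computational verification.

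The hard part is the second paragraph: everything rests on promoting the paper's \emph{right}-adjoint statement \eqref{eq:adjunction-X} to the \emph{left}-adjoint statement \(-\otimes{}^*X\dashv-\otimes X\) on partial modules, and this is precisely where finite-dimensionality of \(X\) is indispensable — for infinite-dimensional \(X\) the module \(X\) is not rigid and the identification \(\Hom_k^r({}^*X,-)\cong-\otimes X\) breaks down. The remaining effort is routine bookkeeping: distinguishing the left and right duals, verifying the partial-module isomorphism \(\Hom_k^r(Z,M)\cong M\otimes Z^*\), and confirming that each isomorphism in the displayed chain is natural in \(Y\), after which Yoneda finishes the argument.
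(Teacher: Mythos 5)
Your proof is correct, but it takes a genuinely different route from the paper's. The paper proves \cref{leII:Theta} computationally: it writes down the explicit map \(\Theta : \{M,N\} \otimes X \to \{M, N\otimes X\}\) of \eqref{eq:Theta} — the same map you recover by tracking units through your adjunction chain — and then constructs its inverse by hand from a dual basis \(\{y_j\}\) of \(X\), setting \(\Theta^{-1}(f) = \sum_j g_j \otimes y_j\) with \(g_j(m\otimes h) = (N \otimes y_j^* \leftharpoonup S^{-1}(h_{(2)}))\,f(m\otimes h_{(1)})\); finite-dimensionality enters there through the dual basis. In your argument finite-dimensionality instead enters through rigidity: you promote the paper's adjunction \eqref{eq:adjunction-X} to \(-\otimes{}^*X \dashv -\otimes X\) on \(\HPMod\) via the identification \(\Hom_k^r(Z,-)\cong -\otimes Z^*\) (which does intertwine the actions as you indicate, provided \({}^*X\) carries the \(S^{-1}\)-twisted dual action so that \(({}^*X)^*\cong X\); this is where the standing hypothesis that \(S\) is bijective is used — note that in the conventions of \cite{EGNObook} this object would be called the right rather than the left dual), and then you chain four natural isomorphisms and apply Yoneda. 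Your route is essentially the proof of \cite[Lemma 7.9.4]{EGNObook}, of which the paper's lemma is explicitly stated to be a part, so it is arguably the more canonical argument; it also discharges the verifications the paper leaves to the reader (that \(\Theta(z)\) and the \(g_j\) are morphisms of partial modules and that \(\Theta\) is \(H\)-linear). What the paper's explicit proof buys in exchange is the dual-basis formula for \(\Theta^{-1}\), and this is not decorative: that formula is used later, in the proof of \cref{thII:AparAparYD}, to write down the coaction \(\rho\) on \(\{A_{par}, A_{par}\}\) in terms of the \(g_j\). Your abstract argument produces the inverse only implicitly, so that computation would still have to be done where it is needed downstream.
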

	\begin{proof}
		We state here the isomorphism \(\Theta : \{M, N\} \otimes X \to \{M, N \otimes X\}\) but leave out the details. 
		Take \(z = \sum_i g_i \otimes x_i \in \{M, N\} \otimes X\) and define \(\Theta(z) : M \otimes H \to N \otimes X\) by
		\begin{equation} \label{eq:Theta} \Theta(z)(m \otimes h) = \sum_i g_i(m \otimes h_{(1)}) \otimes h_{(2)} \btright x_i. \end{equation}
		Then \(\Theta(z)\) is a morphism of partial \(H\)-modules, so \(\Theta(z) \in \{M, N \otimes X\}\). It is a direct check that \(\Theta\) is \(H\)-linear. The inverse of \(\Theta\) is constructed as follows. 
		Take \(f \in \{M, N \otimes X\}\) and let \(\{y_j \mid j \in J\}\) be a basis of \(X\). Denote the dual basis by \(\{y^*_j \mid j \in J\} \subseteq X^*\) and define for all \(j \in J\)
		\begin{equation*}
			g_j (m \otimes h) = (N \otimes y_j^* \leftharpoonup S^{-1}(h_{(2)})) f(m \otimes h_{(1)}) = f(m \otimes h_{(1)})^{[1]} y_j^*(S^{-1}(h_{(2)}) \btright f(m \otimes h_{(1)})^{[2]} ),
		\end{equation*}
		where we used the notation \(f(m \otimes h) = f(m \otimes h)^{[1]} \otimes f(m \otimes h)^{[2]} \in N \otimes X\).
		One can verify that \(g_j : M \otimes H \to N\) is a morphism of partial \(H\)-modules, so \(g_j \in \{M, N\}\).
		Now \(\Theta^{-1}(f) = \sum_{j \in J} g_j \otimes y_j\). 
	\end{proof}
	In particular, \(\{A_{par}, A_{par} \otimes H\} \cong \{A_{par}, A_{par}\} \otimes H\). 	
	To define an appropriate coaction on \(\{A_{par}, A_{par}\},\) we will make use of the following isomorphism \(A_{par} \otimes - \cong - \otimes A_{par}\) as functors \(\HMod \to \HPMod\).
	\begin{lemma}
		\label{le:AYD}
		Let \(X\) denote an \(H\)-module. There is a natural isomorphism of partial \(H\)-modules
		\begin{align*}
			\beta_X :\, & X \otimes A_{par} \to A_{par} \otimes X, \\
			&\beta_X(x \otimes \varepsilon_{h^1} \cdots \varepsilon_{h^n}) = \varepsilon_{h^1_{(2)}} \cdots \varepsilon_{h^n_{(2)}} \otimes h^n_{(3)} S^{-1}(h^n_{(1)})  \cdots h^1_{(3)} S^{-1}(h^1_{(1)}) \btright x.
		\end{align*}
	\end{lemma}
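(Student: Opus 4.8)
The plan is to recognize $\beta_X$ as a braiding assembled from a single $H$-coaction on $A_{par}$, so that both its well-definedness and its invertibility can be read off from the bicomodule algebra structure of \cref{le:bicomodule}, avoiding any direct confrontation with the defining relations of $A_{par}$. First I would produce the coaction: restricting the left $H$-comodule algebra structure \eqref{eq:leftHcomodule} of $H_{par}$ to the subalgebra $A_{par}$ and evaluating on a generator gives $\delta^L(\varepsilon_h) = h_{(1)}S(h_{(3)}) \otimes \varepsilon_{h_{(2)}}$, which lies in $H \otimes A_{par}$, so $A_{par}$ is a left $H$-subcomodule algebra. Composing with $S^{-1}$ on the $H$-leg and flipping the tensorands turns this into an algebra homomorphism $\delta : A_{par} \to A_{par} \otimes H^{\mathrm{op}}$, $\delta(\varepsilon_h) = \varepsilon_{h_{(2)}} \otimes h_{(3)}S^{-1}(h_{(1)})$, using that $S^{-1}$ is an algebra anti-homomorphism. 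Because $\delta$ is multiplicative into $A_{par} \otimes H^{\mathrm{op}}$, the $H$-components of $\delta(\varepsilon_{h^1}\cdots\varepsilon_{h^n})$ multiply in reversed order, and one checks that
\[\beta_X = (\id_{A_{par}} \otimes \btright) \circ (\delta \otimes \id_X) \circ \mathrm{flip}_{X, A_{par}}\]
reproduces exactly the stated formula. Since $\delta$ is a genuine map on the quotient algebra $A_{par}$, this presentation shows at once that $\beta_X$ is well-defined, independently of how an element is written as a product of $\varepsilon$'s.

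Next I would address bijectivity. The untwisted coaction $\delta^L$ suggests the inverse $\beta_X^{-1}(\varepsilon_h \otimes x) = (h_{(1)}S(h_{(3)}) \btright x) \otimes \varepsilon_{h_{(2)}}$, built in the same way from $\delta^L$ and hence equally well-defined. On a single generator the identities $\beta_X^{-1}\circ\beta_X = \id$ and $\beta_X\circ\beta_X^{-1} = \id$ follow by collapsing the resulting $H$-strings with the antipode relations $S(h_{(1)})h_{(2)} = h_{(1)}S(h_{(2)}) = \epsilon(h)$ and $S^{-1}(h_{(2)})h_{(1)} = h_{(2)}S^{-1}(h_{(1)}) = \epsilon(h)$; this is precisely where the standing hypothesis that $S$ is invertible is used. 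The extension to products $\varepsilon_{h^1}\cdots\varepsilon_{h^n}$ is then forced by multiplicativity of the two coactions $\delta$ and $\delta^L$. Naturality in $X$ is immediate: for an $H$-linear map $f : X \to Y$ one has $\beta_Y(f(x) \otimes \varepsilon_h) = \varepsilon_{h_{(2)}} \otimes f(h_{(3)}S^{-1}(h_{(1)}) \btright x) = (\id \otimes f)\beta_X(x \otimes \varepsilon_h)$, so $\beta$ is a natural transformation between the functors $(-)\otimes A_{par}$ and $A_{par}\otimes(-)$ from $\HMod$ to $\HPMod$.

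Finally I would check that each $\beta_X$ is a morphism of partial modules. Since the $[\ell]$ generate $H_{par}$, it suffices to verify $\beta_X(\ell \bullet \xi) = \ell \bullet \beta_X(\xi)$ for $\ell \in H$. On a single generator $\xi = x \otimes \varepsilon_h$, applying \eqref{actegory2} to the source, \eqref{actegory1} to the target, and the module-algebra formula \eqref{eq:partial_module_Apar}, both sides reduce after the antipode collapses to
\[\varepsilon_{\ell_{(1)}h_{(2)}}\varepsilon_{\ell_{(2)}} \otimes \ell_{(3)}h_{(3)}S^{-1}(h_{(1)}) \btright x,\]
which settles the case $n = 1$. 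The general case follows by induction on $n$, using the multiplicativity of $\delta$ and of the partial action \eqref{eq:partial_module_Apar} together with the commutation relations for the $\varepsilon$'s recorded in \cite[Lemma 4.7]{ABVparreps}. I expect this last step to be the main obstacle: everything else is dictated by \cref{le:bicomodule} and the antipode axioms, whereas the partial-module-morphism check for products is the one genuinely index-heavy computation, requiring careful bookkeeping of the diagonal actions \eqref{actegory1}, \eqref{actegory2} and repeated reorganization of the strings $\varepsilon_{h^1_{(2)}}\cdots\varepsilon_{h^n_{(2)}}$ via the $A_{par}$-relations.
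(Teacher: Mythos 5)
Your proposal is correct, and it diverges from the paper's proof precisely where that proof is most terse: well-definedness. The paper appeals to the universal property of \(A_{par}\) (\cite[Theorem 4.12]{ABVparreps}), performs the single-generator partial-linearity computation, and merely states the inverse formula. You instead obtain well-definedness from \cref{le:bicomodule}: since the left coaction \eqref{eq:leftHcomodule}, call it \(\delta^L\), is an algebra map with \(\delta^L(\varepsilon_h) = h_{(1)}S(h_{(3)}) \otimes \varepsilon_{h_{(2)}}\), it restricts to the subalgebra \(A_{par}\), and twisting by \(S^{-1}\) and the flip yields an algebra map \(\delta : A_{par} \to A_{par} \otimes H^{\mathrm{op}}\), \(\delta(\varepsilon_h) = \varepsilon_{h_{(2)}} \otimes h_{(3)}S^{-1}(h_{(1)})\), with \(\beta_X(x \otimes a) = a^{(0)} \otimes a^{(1)} \btright x\) where \(\delta(a) = a^{(0)} \otimes a^{(1)}\). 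This \(\delta\) is exactly the coaction \(\rho_R\) of \eqref{eqII:recall_rhoR}, which the paper introduces only \emph{after} the lemma; your route therefore buys a conceptual explanation of where \(\rho_R\) comes from and why \(\rho_R(ab) = a^{(0)}b^{(0)} \otimes b^{(1)}a^{(1)}\), at the cost of leaning on \cref{le:bicomodule} instead of an external citation. Your \(n=1\) computation agrees with the paper's (your \(\varepsilon_{\ell_{(1)}h_{(2)}}\varepsilon_{\ell_{(2)}}\) and the paper's \(\varepsilon_{\ell_{(1)}}\varepsilon_{\ell_{(2)}h_{(2)}}\) coincide by the commutation relation of \cite[Lemma 4.7]{ABVparreps}), your inverse is the same map the paper writes down, and the two points you treat that the paper leaves implicit --- mutual inversion and the passage from generators to products --- are handled by sound arguments.

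Two small remarks. First, \(A_{par}\) is a subalgebra of the quotient \(H_{par}\), not itself a quotient algebra; what your argument really uses is that \(\delta^L\) is defined on the algebra \(H_{par}\), hence on elements rather than on chosen expressions in the generators. Second, you overestimate the difficulty of your final step. Partial linearity of \(\beta_X\) at \(a\) is equivalent to the identity \((\ell_{(2)} \bullet a)^{(0)} \otimes (\ell_{(2)} \bullet a)^{(1)}\ell_{(1)} = \ell_{(1)} \bullet a^{(0)} \otimes \ell_{(2)}a^{(1)}\) in \(A_{par} \otimes H\) for all \(\ell \in H\), and this identity is multiplicative in \(a\): if it holds for \(a\) and \(b\), then (PA2) together with multiplicativity of \(\delta\) gives it for \(ab\) by direct substitution, with no Sweedler bookkeeping and no use of \cite[Lemma 4.7]{ABVparreps}. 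The same substitution scheme disposes of the mutual-inverse check for products. So the ``index-heavy'' induction you anticipate collapses to the generator case, which you have already done.
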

	\begin{proof}
		Using the universal property of \(A_{par}\) (see \cite[Theorem 4.12]{ABVparreps}), one shows that \(\beta_X\) is well-defined. Let us show that it is a morphism of partial modules. For \(k \in H,\) have
		
		\begin{align*}
			\beta_X(k \bullet (x \otimes \varepsilon_h)) &= \beta_X(k_{(1)} \btright x \otimes k_{(2)} \bullet \varepsilon_h) = \beta_X(k_{(1)} \btright x \otimes  \varepsilon_{k_{(2)} h} \varepsilon_{k_{(3)}}) \\
			&= \varepsilon_{k_{(3)}} \varepsilon_{k_{(6)} h_{(2)}} \otimes  k_{(7)} h_{(3)} S^{-1}(k_{(5)} h_{(1)}) k_{(4)} S^{-1}(k_{(2)}) k_{(1)} \btright x  \\
			&= \varepsilon_{k_{(1)}} \varepsilon_{k_{(2)} h_{(2)}} \otimes  k_{(3)} h_{(3)} S^{-1}( h_{(1)})  \btright x
			= k_{(1)} \bullet \varepsilon_{h_{(2)}} \otimes k_{(2)} h_{(3)} S^{-1}( h_{(1)})  \btright x \\
			&= k \bullet \beta_X(x \otimes \varepsilon_h).
		\end{align*}
		The inverse of \(\beta_X\) is given by
		\[A_{par} \otimes X \to X \otimes A_{par}, \quad \varepsilon_{h^1} \cdots \varepsilon_{h^n} \otimes x \mapsto h^1_{(1)} S(h^1_{(3)}) \cdots h^n_{(1)} S(h^n_{(3)}) \btright x \otimes \varepsilon_{h^1_{(2)}} \cdots \varepsilon_{h^n_{(2)}}. \qedhere\]
	\end{proof}
	
	Consider the linear map \(\rho_R : A_{par} \to A_{par} \otimes H\) defined as
	\begin{equation}
		\label{eqII:recall_rhoR}
		\rho_R(\varepsilon_{h^1} \cdots \varepsilon_{h^n})=\varepsilon_{h^1_{(2)}} \cdots \varepsilon_{h^n_{(2)}} \otimes h^n_{(3)} S^{-1}(h^n_{(1)}) \cdots h^1_{(3)} S^{-1}(h^1_{(1)}).
	\end{equation}
	Endowed with this coaction, \(A_{par}\) becomes a right \(H\)-comodule, and if we write \(\rho_R(a) = a^{(0)} \otimes a^{(1)}\) for \(a \in A_{par}\), then
	\[\beta_X(x \otimes a) = a^{(0)} \otimes a^{(1)} \btright x\]
	for any global module \(X\). 
	
	\begin{lemma}
		For any \(a, b \in A_{par}\),
		\begin{gather}
			\label{eqII:Apar_commutative2}
			b^{(0)}(b^{(1)} \bullet a) = ab, \\
			a^{(0)} (a^{(1)} \bullet (S(a^{(2)}) \bullet b)) = ab. \label{eqII:relation_ab}
		\end{gather}
	\end{lemma}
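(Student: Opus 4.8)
The plan is to prove \eqref{eqII:Apar_commutative2} first and then deduce \eqref{eqII:relation_ab} from it. Since the coaction \(\rho_R\) is defined multiplicatively on the generators \(\varepsilon_h\) (this is exactly what \eqref{eqII:recall_rhoR} records), it is an algebra map; so the natural route is to settle \eqref{eqII:Apar_commutative2} on a single generator \(b=\varepsilon_h\), to read off the special case \(a=1_{A_{par}}\) needed for \eqref{eqII:relation_ab}, and then to propagate \eqref{eqII:Apar_commutative2} along products \(b=\varepsilon_{h^1}\cdots\varepsilon_{h^n}\).

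For the base case \(b=\varepsilon_h\) we have \(\rho_R(\varepsilon_h)=\varepsilon_{h_{(2)}}\otimes h_{(3)}S^{-1}(h_{(1)})\), so the left-hand side of \eqref{eqII:Apar_commutative2} reads \(\varepsilon_{h_{(2)}}\,(h_{(3)}S^{-1}(h_{(1)})\bullet a)\). Writing \(\varepsilon_{h_{(2)}}=h_{(2)}\bullet 1_{A_{par}}\) (by \eqref{eq:partial_module_Apar}) and invoking axiom (PA3) in the form \((h_{(1)}\bullet 1_{A_{par}})(h_{(2)}k\bullet a)=h\bullet(k\bullet a)\) with \(k=S^{-1}(h_{(1)})\), the expression collapses to \(h_{(2)}\bullet(S^{-1}(h_{(1)})\bullet a)\), which is precisely \(a\cdot\varepsilon_h=a\varepsilon_h\) by the formula \eqref{eq:rightAaction} for the right \(A_{par}\)-action. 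In particular, taking \(a=1_{A_{par}}\) gives the identity \(b^{(0)}\varepsilon_{b^{(1)}}=b\) on generators (and then on all of \(A_{par}\) once the general case below is available).

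Granting \eqref{eqII:Apar_commutative2}, identity \eqref{eqII:relation_ab} is immediate. By coassociativity of the coaction we may read \(a^{(1)}\otimes a^{(2)}\) as \(\Delta\) applied to the single \(H\)-leg of \(\rho_R(a)\), so \eqref{eq:leftAaction} yields \(a^{(1)}\bullet(S(a^{(2)})\bullet b)=\varepsilon_{a^{(1)}}\,b\). Hence the left-hand side of \eqref{eqII:relation_ab} equals \(a^{(0)}\varepsilon_{a^{(1)}}\,b=a^{(0)}(a^{(1)}\bullet 1_{A_{par}})\,b\), and the \(a=1_{A_{par}}\) instance of \eqref{eqII:Apar_commutative2} (namely \(a^{(0)}(a^{(1)}\bullet 1_{A_{par}})=a\)) turns this into \(ab\), as desired.

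The main obstacle is the passage from the generator case to an arbitrary product \(b=\varepsilon_{h}c\). Using multiplicativity of \(\rho_R\), the left-hand side of \eqref{eqII:Apar_commutative2} becomes \(\varepsilon_{h_{(2)}}\,c^{(0)}\,\bigl(h_{(3)}S^{-1}(h_{(1)})\,c^{(1)}\bullet a\bigr)\), and the same (PA3)-collapse used in the base case absorbs the outer factor \(\varepsilon_{h_{(2)}}\) into the leftmost letter of the nested action. What remains is to move the \(\varepsilon\)-factors coming from \(c^{(0)}\) past the resulting nested partial actions so that each \(\varepsilon_{h^i_{(2)}}\) meets its partner \(h^i_{(3)}S^{-1}(h^i_{(1)})\); this is exactly where the partiality of \(\bullet\) bites, since \((uv)\bullet a\neq u\bullet(v\bullet a)\) in general. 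I expect this step to be the hard, bookkeeping-heavy part of the proof, controlled by axiom (PA2) together with the commutation relations among the \(\varepsilon_h\) (in particular the centrality of \(\tilde\varepsilon_k=[S(k_{(1)})][k_{(2)}]\) and the identity \([h]\varepsilon_k=\varepsilon_{h_{(1)}k}[h_{(2)}]\)) recorded in \cite[Lemma 4.7]{ABVparreps}; the cleaner cases (e.g.\ \(A_{par}\) commutative) serve as a useful sanity check that the re-association indeed telescopes down to \(ab\).
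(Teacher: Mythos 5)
Your base case (\(b=\varepsilon_h\), arbitrary \(a\)) and your derivation of \eqref{eqII:relation_ab} from \eqref{eqII:Apar_commutative2} are both correct --- the latter is exactly the paper's argument --- but the proposal has a genuine gap: the extension of \eqref{eqII:Apar_commutative2} from a single generator \(b=\varepsilon_h\) to an arbitrary product \(b=\varepsilon_{h^1}\cdots\varepsilon_{h^n}\), which you yourself flag as ``the hard, bookkeeping-heavy part,'' is never carried out, and the sketch you give of it would fail as stated. First, there is an order error: from \eqref{eqII:recall_rhoR} one reads off that \(\rho_R\) is multiplicative \emph{with reversal of the \(H\)-legs}, \(\rho_R(xy)=x^{(0)}y^{(0)}\otimes y^{(1)}x^{(1)}\) (the paper records precisely this identity in the proof of \cref{thII:AparAparYD}), so for \(b=\varepsilon_h c\) the left-hand side of \eqref{eqII:Apar_commutative2} is \(\varepsilon_{h_{(2)}}c^{(0)}\bigl(c^{(1)}h_{(3)}S^{-1}(h_{(1)})\bullet a\bigr)\), not \(\varepsilon_{h_{(2)}}c^{(0)}\bigl(h_{(3)}S^{-1}(h_{(1)})c^{(1)}\bullet a\bigr)\). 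Second, with the correct order the ``(PA3)-collapse'' you invoke is unavailable: (PA3) absorbs \(\varepsilon_{u_{(1)}}\) into \(u_{(2)}v\bullet a\) only when \(\varepsilon_{u_{(1)}}\) sits directly adjacent to the action term and \(u_{(2)}\) is the leftmost Sweedler factor of the acting element; here the leftmost factor is \(c^{(1)}\), and the factor \(c^{(0)}\) (which in general does not commute with \(\varepsilon_{h_{(2)}}\), since \(A_{par}\) is noncommutative for general \(H\)) stands between \(\varepsilon_{h_{(2)}}\) and the action. So the inductive step is not a routine rerun of the base case, and it is left entirely open.

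It is instructive to see how the paper's proof avoids this: it transposes your decomposition. The hard, computational step there is the case where \(a=\varepsilon_k\) is a single generator and \(b=\varepsilon_{h^1}\cdots\varepsilon_{h^n}\) is arbitrary (proved by telescoping with the relation \(\varepsilon_{h_{(1)}}\varepsilon_{h_{(2)}k}=\varepsilon_{h_{(1)}k}\varepsilon_{h_{(2)}}\)), and the induction is then run on \(a\), where it is trivial: by (PA2) and coassociativity of \(\rho_R\), \(b^{(0)}(b^{(1)}\bullet aa') = b^{(0)}(b^{(1)}\bullet a)(b^{(2)}\bullet a') = a\,b^{(0)}(b^{(1)}\bullet a') = aa'b\). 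The point is that the action variable \(a\) is the one in which the identity propagates multiplicatively for free, whereas your induction variable \(b\) is the coacted one, whose coaction legs get entangled with the partial action. For the record, your route can be completed: inducting on the length of \(b\) so that the hypothesis also applies to the components of \(c^{(0)}\), one shows \(c^{(0)}(c^{(1)}v\bullet a)=c^{(0)}\bigl(c^{(1)}\bullet(v\bullet a)\bigr)\) using (PA3), coassociativity of \(\rho_R\) and the hypothesis for \(c^{(0)}\) with \(a=1_{A_{par}}\); then the hypothesis for \(c\) and your base case give \(\varepsilon_{h_{(2)}}c^{(0)}\bigl(c^{(1)}\bullet(h_{(3)}S^{-1}(h_{(1)})\bullet a)\bigr)=\varepsilon_{h_{(2)}}\bigl(h_{(3)}S^{-1}(h_{(1)})\bullet a\bigr)c=a\varepsilon_h c\). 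But this argument --- not (PA2) plus commutation bookkeeping --- is exactly what is missing from your proposal, so as it stands it is not a proof.
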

	\begin{proof}
		Take \(b = \varepsilon_{h^1} \cdots \varepsilon_{h^n} \in A_{par}\) and \(k \in H\). In the following computation, we apply the relation \(\varepsilon_{h_{(1)}} \varepsilon_{h_{(2)}k} = \varepsilon_{h_{(1)}k} \varepsilon_{h_{(2)}}\) in \(A_{par}\) (see \cite[\S 4.4]{ABVparreps}) repeatedly. We have
		\begin{align*}
			b^{(0)} (b^{(1)} \bullet \varepsilon_k) &= \varepsilon_{h^1_{(2)}} \cdots \varepsilon_{h^n_{(2)}} (h^n_{(3)} S^{-1}(h^n_{(1)}) \cdots h^1_{(3)} S^{-1}(h^1_{(1)}) \bullet \varepsilon_k) \\
			&= \varepsilon_{h^1_{(3)}} \cdots \varepsilon_{h^n_{(3)}} \varepsilon_{h^n_{(4)} S^{-1}(h^n_{(2)}) \cdots h^1_{(4)} S^{-1}(h^n_{(2)})} \varepsilon_{h^n_{(5)} S^{-1}(h^n_{(1)}) \cdots h^1_{(5)} S^{-1}(h^1_{(1)})k} \\
			&= \varepsilon_{h^1_{(3)}} \cdots \varepsilon_{h^{n - 1}_{(3)}}  \varepsilon_{h^{n - 1}_{(4)} S^{-1}(h^{n - 1}_{(2)}) \cdots h^1_{(4)} S^{-1}(h^n_{(2)})}
			\varepsilon_{h^n_{(2)}} \varepsilon_{h^n_{(3)} S^{-1}(h^n_{(1)}) h^{n - 1}_{(5)} S^{-1}(h^{n - 1}_{(1)}) \cdots h^1_{(5)} S^{-1}(h^1_{(1)})k} \\
			&= \cdots = \varepsilon_{h^1_{(2)}} \cdots \varepsilon_{h^n_{(2)}} \varepsilon_{h^n_{(3)} S^{-1}(h^n_{(1)}) \cdots h^1_{(3)} S^{-1}(h^1_{(1)})k} \\
			&= \varepsilon_{h^1_{(2)}} \cdots \varepsilon_{h^{n - 1}_{(2)}} \varepsilon_{h^{n - 1}_{(3)} S^{-1}(h^{n - 1}_{(1)}) \cdots h^1_{(3)} S^{-1}(h^1_{(1)})k}\varepsilon_{h^n} \\
			&= \cdots = \varepsilon_k \varepsilon_{h^1} \cdots \varepsilon_{h^n}.
		\end{align*}
		By linearity, \(\varepsilon_k b = b^{(0)} (b^{(1)} \bullet \varepsilon_k)\) for all \(b \in A_{par}\).
		Now, if \(a, a' \in A_{par}\) are such that \(ab = b^{(0)}(b^{(1)} \bullet a)\) and \(a'b = b^{(0)}(b^{(1)} \bullet a')\) for all \(b \in A_{par}\), then
		\begin{equation*}
			b^{(0)} (b^{(1)} \bullet (aa')) = b^{(0)}(b^{(1)} \bullet a)(b^{(2)} \bullet a')
			= a b^{(0)} (b^{(1)} \bullet a') = aa'b.
		\end{equation*}
		Since \(A_{par}\) is generated as an algebra by the elements \(\varepsilon_k\) for \(k \in H\), the two computations above show that \(ab = b^{(0)}(b^{(1)} \bullet a)\) for all \(a, b \in A_{par}\).
		
		In particular \(a = a^{(0)}(a^{(1)} \bullet 1_{A_{par}}) = a^{(0)} \varepsilon_{a^{(1)}}\). It follows that
		\begin{equation*}
			a^{(0)} (a^{(1)} \bullet (S(a^{(2)}) \bullet b)) \overset{\eqref{eq:leftAaction}}{=} a^{(0)} \varepsilon_{a^{(1)}} b = ab. \qedhere
		\end{equation*} 
	\end{proof}
	
	\begin{theorem}
		\label{thII:AparAparYD}
		Let \(H\) be a finite-dimensional Hopf algebra. Then \(\{A_{par}, A_{par}\}\) is a braided commutative algebra in \({_H}\mathcal{YD}^H\). Consequently, $H_{glob}$ is a Hopf algebroid.
	\end{theorem}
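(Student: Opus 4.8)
The plan is to feed \(B \coloneqq \{A_{par},A_{par}\}\) into \cite[Theorem 4.1]{BM}, which turns a braided commutative algebra in \HYDH\ into a Hopf algebroid via its smash product with \(H\). Since \(H_{glob} = B \# H\) by definition and \(H\) is finite-dimensional, the ``consequently'' clause is then immediate, and the entire task reduces to (a) upgrading the \(H\)-module algebra \(B\) to an object of \HYDH\ and (b) verifying braided commutativity.

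The algebra and left \(H\)-module structures are already in place: the product is the convolution/composition of \cref{le:Apar_convolution}, the action is \eqref{eq:Hmod_righthom}, and \(H\) acts by algebra maps (noted just before the definition of \(H_{glob}\)), so \(B\) is an \(H\)-module algebra. For the right \(H\)-coaction I would use the coaction \(\rho_R\) \eqref{eqII:recall_rhoR} on the target \(A_{par}\) together with the finite-dimensionality isomorphism \(\Theta\) of \cref{leII:Theta}, which identifies \(\{A_{par},A_{par}\otimes H\}\) with \(B \otimes H\): the coaction \(\rho_B : B \to B\otimes H\) is the one corresponding, through \(\Theta\), to the \(\rho_R\)-twisted version of a morphism of partial modules \(A_{par}\otimes H \to A_{par}\otimes H\), where the explicit \(S^{-1}\)-twist appearing in \(\Theta^{-1}\) (see \cref{leII:Theta}) is exactly what makes the construction land back inside \(B\otimes H\). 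Coassociativity and counitality then descend from those of \(\rho_R\).

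The Yetter--Drinfeld compatibility between action and coaction is the first genuine point, and I would derive it from the fact that the natural isomorphism \(\beta_X\) of \cref{le:AYD} is a morphism of partial modules. Writing out \(\beta_X(k\bullet(x\otimes a)) = k\bullet\beta_X(x\otimes a)\) and specializing to \(X = H\), \(x = 1_H\), yields the crossed relation
\[
(k_{(2)}\bullet a)^{(0)}\otimes (k_{(2)}\bullet a)^{(1)} k_{(1)} = k_{(1)}\bullet a^{(0)}\otimes k_{(2)} a^{(1)},
\]
which is precisely the half-braiding identity for \((A_{par},\bullet,\rho_R)\); the same identity is what guarantees well-definedness of \(\rho_B\) above and, transported through \(\Theta\), gives the YD axiom for \(B\). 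This step is bookkeeping with Sweedler indices, kept under control by the relation \(\varepsilon_{h_{(1)}}\varepsilon_{h_{(2)}k} = \varepsilon_{h_{(1)}k}\varepsilon_{h_{(2)}}\) in \(A_{par}\).

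The heart of the proof, and the step I expect to be the main obstacle, is braided commutativity, i.e.\ that the product \(\mu_B\) is invariant under the braiding \(c_{B,B}\) of \HYDH. I would reduce this to the two identities \eqref{eqII:Apar_commutative2} and \eqref{eqII:relation_ab} already established for \(A_{par}\), which say that \(A_{par}\) itself is braided commutative for its half-braiding (\(ab = b^{(0)}(b^{(1)}\bullet a)\), together with its antipode-twisted variant). Since an element of \(B\) is completely determined by the partially \(H\)-linear map \(H \to A_{par}\) it induces (\cref{prop:Psibar}, \cref{cor:iso_Psi}), I would evaluate both \(\mu_B(\varphi\otimes\psi)\) and \(\mu_B\circ c_{B,B}(\varphi\otimes\psi)\) on \(1_{A_{par}}\otimes h\), expand each using the composition formula of \cref{le:Apar_convolution}, and match the resulting maps term by term: \eqref{eqII:Apar_commutative2} handles the ``plain'' part and \eqref{eqII:relation_ab} handles the part where the \(S\)-twist coming from the braiding enters. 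Once braided commutativity is in hand, \cite[Theorem 4.1]{BM} applies and delivers the Hopf algebroid structure on \(H_{glob}\).
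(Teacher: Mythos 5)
Your proposal follows essentially the same route as the paper: the right \(H\)-coaction on \(\{A_{par},A_{par}\}\) is built from \(\rho_R\) \eqref{eqII:recall_rhoR} together with the finite-dimensionality isomorphism \(\Theta\) of \cref{leII:Theta}, braided commutativity is reduced via the injection \(\Psi\) of \cref{prop:Psibar} to the identities \eqref{eqII:Apar_commutative2} and \eqref{eqII:relation_ab} for \(A_{par}\), and the Hopf algebroid structure on \(H_{glob}\) then comes from \cite[Theorem 4.1]{BM}, exactly as in the paper. The only difference is one of emphasis: the paper verifies the Yetter--Drinfeld compatibility by a direct Sweedler computation from the explicit coaction formula \eqref{eqII:coaction_explicit}, with the half-braiding property of \(\beta_X\) (\cref{le:AYD}) entering instead in the construction and well-definedness of the coaction, whereas you propose to read the YD axiom off from that half-braiding identity transported through \(\Theta\) --- when written out, this amounts to the same bookkeeping.
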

	\begin{proof}
		The second statement follows directly from the first one, by \cite[Theorem 4.1]{BM}.
		
		To prove the first statement, let us begin by defining an \(H\)-coaction on \(\{A_{par}, A_{par}\}\). The evaluation \(e_{AA} : A_{par} \otimes \{A_{par}, A_{par}\} \to A_{par},\ a \otimes \varphi \mapsto \varphi(a \otimes 1_H) \) and the half-braiding \(\beta_X\) from \cref{le:AYD} are morphisms of partial modules. Recall that \(\beta_X\) can be expressed using the coaction \(\rho_R\) \eqref{eqII:recall_rhoR}. Consider the composition
		\begin{align*}
			A_{par} \otimes H \otimes \{A_{par}, A_{par}\} \overset{\beta^{-1}_H \otimes \{A_{par}, A_{par}\}}&{\longrightarrow}  H \otimes A_{par} \otimes \{A_{par}, A_{par}\} \\
			\overset{H \otimes e_{AA}}&{\longrightarrow} H \otimes A_{par} \\
			\overset{\beta_H}&{\longrightarrow} A_{par} \otimes H, \\
			a \otimes h \otimes \varphi &\longmapsto \varphi(a^{(0)} \otimes 1_H)^{(0)} \otimes \varphi(a^{(0)} \otimes 1_H)^{(1)} S(a^{(1)}) h.
		\end{align*}
		It corresponds to a morphism of \(H\)-modules 
		\begin{align*}
			\xi : H \otimes \{A_{par}, A_{par}\} &\to \{A_{par}, A_{par} \otimes H\}, \\
			\xi(h \otimes \varphi)(a \otimes k) &= \varphi(a^{(0)} \otimes k_{(2)})^{(0)} \otimes \varphi(a^{(0)} \otimes k_{(2)})^{(1)} S(a^{(1)}) k_{(1)} h
		\end{align*} under the adjunction \eqref{eq:correspondence_righthom}. By evaluating \(\xi\) in \(1_H \otimes \varphi\) and composing with the isomorphism \(\{A_{par}, A_{par} \otimes H\} \cong \{A_{par}, A_{par}\} \otimes H\) from \cref{leII:Theta}, we obtain a map \[\rho : \{A_{par}, A_{par}\} \to \{A_{par}, A_{par}\} \otimes H.\] 
		We identify \(\{A_{par}, A_{par}\}\) with \({_{(H)}}\Hom(H, A_{par})\) using the isomorphism \(\Psi\) (see \cref{prop:Psibar} and \cref{cor:iso_Psi}), i.\,e.\ \(f \in \{A_{par}, A_{par}\}\) is identified with the partially linear map \(f(1_{A_{par}} \otimes -)\) from \(H\) to \(A_{par}\).
		Let \(\{y_j \mid j \in J\}\) be a basis of \(H\) and \(\{y^*_j \mid j \in J\}\) the dual basis of \(H^*\). Then 
		\[\rho : {_{(H)}}\Hom(H, A_{par}) \to {_{(H)}}\Hom(H, A_{par}) \otimes H, \quad f \mapsto f^{(0)} \otimes f^{(1)} = \sum_{j \in J} g_j \otimes y_j,\] 
		where the \(g_j : H \to A_{par}\) are defined by
		\[g_j(k) = f(k_{(2)})^{(0)} y^*_j(S^{-1}(k_{(3)}) f(k_{(2)})^{(1)} k_{(1)}).\]
		The codomain of \(\rho\) is \({_{(H)}}\Hom(H, A_{par}) \otimes H\) by construction, so \(g_j \in {_{(H)}}\Hom(H, A_{par})\) for all \(j \in J\).
		For any vector space \(V\), there is an obvious inclusion \[{_{(H)}}\Hom(H, A_{par}) \otimes V \subseteq \Hom_k(H, A_{par} \otimes V);\] evaluation of \(\sum_i f_i \otimes v_i\) in \(k \in H\) is given by \(\sum_i f_i(k) \otimes v_i\). This means in particular that \(\sum_i f_i \otimes v_i = \sum_i f'_i \otimes v'_i\) in \({_{(H)}}\Hom(H, A_{par}) \otimes V\) if and only if \(\sum_i f_i(k) \otimes v_i = \sum_i f'_i(k) \otimes v'_i\) for all \(k \in H\).
		
		Evaluating \(\rho(f)\) in \(k \in H\) gives
		\begin{equation}
			\label{eqII:coaction_explicit}
			f^{(0)}(k) \otimes f^{(1)} = \sum_{j \in J} g_j(k) \otimes y_j = f(k_{(2)})^{(0)} \otimes S^{-1}(k_{(3)}) f(k_{(2)})^{(1)} k_{(1)}.
		\end{equation}
		Then in particular 
		\begin{equation*}
			f^{(0)}(k) \epsilon(f^{(1)}) = f(k_{(2)})^{(0)} \epsilon(S^{-1}(k_{(3)}) f(k_{(2)})^{(1)} k_{(1)}) = f(k),
		\end{equation*}
		so \(f^{(0)} \epsilon(f^{(1)}) = f\) and we conclude that \(\rho\) is counital. Let us check that \(\rho\) is coassociative. We write
		\begin{equation*}
			\rho^2(f) = f^{(0)} \otimes f^{(1)} \otimes f^{(2)} = \sum_{i, j \in J} h_{ji} \otimes y_i \otimes y_j,
		\end{equation*}
		where \(h_{ji}(k) = g_j(k_{(2)})^{(0)} y^*_i(S^{-1}(k_{(3)}) g_j(k_{(2)})^{(1)} k_{(1)})\). For all \(k \in H\), we have
		\begin{align*}
			\sum_{i, j \in J} h_{ji}(k) &\otimes y_i \otimes y_j = \sum_{j \in J} g_j(k_{(2)})^{(0)} \otimes S^{-1}(k_{(3)}) g_j(k_{(2)})^{(1)} k_{(1)} \otimes y_j \\
			&= f(k_{(3)})^{(0)} \otimes S^{-1}(k_{(5)}) f(k_{(3)})^{(1)} k_{(1)} \otimes S^{-1}(k_{(4)}) f(k_{(3)})^{(2)} k_{(2)} \\
			&= f(k_{(2)})^{(0)} \otimes (S^{-1}(k_{(3)}) f(k_{(2)})^{(1)} k_{(1)})_{(1)} \otimes (S^{-1}(k_{(3)}) f(k_{(2)})^{(1)} k_{(1)})_{(2)} \\
			&= f^{(0)}(k) \otimes {f^{(1)}}_{(1)} \otimes {f^{(1)}}_{(2)},
		\end{align*}
		hence \(\rho^2(f) = f^{(0)} \otimes {f^{(1)}}_{(1)} \otimes {f^{(1)}}_{(2)}\).
		Next, we verify the compatibility condition
		\[(l \rightharpoonup f)^{(0)} \otimes (l\rightharpoonup f)^{(1)} = (l_{(2)} \rightharpoonup f^{(0)}) \otimes l_{(3)} f^{(1)} S^{-1}(l_{(1)})\]
		for left-right Yetter-Drinfeld modules. For any \(k, l \in H\)
		\begin{align*}
			(l_{(2)} &\rightharpoonup f^{(0)})(k) \otimes l_{(3)} f^{(1)} S^{-1}(l_{(1)}) =	\sum_{j \in J} (l_{(2)} \rightharpoonup g_j)(k) \otimes l_{(3)} y_j S^{-1}(l_{(1)}) \\ 
			&= \sum_{j \in J} g_j(kl_{(2)}) \otimes l_{(3)} y_j S^{-1}(l_{(1)})
			= f(k_{(2)} l_{(3)})^{(0)} \otimes l_{(5)} S^{-1}(k_{(3)}l_{(4)})f(k_{(2)}l_{(3)})^{(1)} k_{(1)} l_{(2)} S^{-1}(l_{(1)}) \\
			&=  f(k_{(2)} l)^{(0)} \otimes S^{-1}(k_{(3)})f(k_{(2)}l)^{(1)} k_{(1)} 
			= (l \rightharpoonup f)(k_{(2)})^{(0)} \otimes S^{-1}(k_{(3)})(l\rightharpoonup f)(k_{(2)})^{(1)} k_{(1)} \\
			&= (l \rightharpoonup f)^{(0)}(k) \otimes (l\rightharpoonup f)^{(1)}.
		\end{align*}
		This shows that \((l_{(2)} \rightharpoonup f^{(0)}) \otimes l_{(3)} f^{(1)} S^{-1}(l_{(1)}) = (l \rightharpoonup f)^{(0)} \otimes (l\rightharpoonup f)^{(1)}\) for all \(l \in H\).
		
		We showed that \(\{A_{par}, A_{par}\}\) is an object in \(\HYDH\). It is an \(H\)-module algebra for the \(H\)-linear composition \(\circ_{\{AAA\}}\), hence \(\{A_{par}, A_{par}\}\) is an algebra in \(\HYDH\) if it is an \(H^{op}\)-comodule algebra too. It is clear from \eqref{eqII:recall_rhoR} that \(\rho_R(ab) = a^{(0)} b^{(0)} \otimes b^{(1)} a^{(1)}\) for all \(a, b \in A_{par}\). Take \(f, \tilde{f} \in \{A_{par}, A_{par}\} \equiv {_{(H)}}\Hom(H, A_{par})\) and recall that the composition product on \(\{A_{par}, A_{par}\}\) coincides with the convolution product by \cref{le:Apar_convolution}, i.\,e.\ \((f \circ_{\{AAA\}}\tilde{f})(k) = f(k_{(1)}) \tilde{f}(k_{(2)})\). Now
		\begin{align*}
			(f \circ_{\{AAA\}} &\tilde{f})^{(0)}(k) \otimes (f \circ_{\{AAA\}} \tilde{f})^{(1)}
			\overset{\eqref{eqII:coaction_explicit}}{=} (f \circ_{\{AAA\}} \tilde{f})(k_{(2)})^{(0)} \otimes S^{-1}(k_{(3)}) (f \circ_{\{AAA\}} \tilde{f})(k_{(2)})^{(1)} k_{(1)} \\
			&= (f(k_{(2)}) \tilde{f}(k_{(3)}))^{(0)} \otimes S^{-1}(k_{(4)})  (f(k_{(2)}) \tilde{f}(k_{(3)}))^{(1)} k_{(1)} \\
			&= f(k_{(2)})^{(0)} \tilde{f}(k_{(3)})^{(0)} \otimes S^{-1}(k_{(4)}) \tilde{f}(k_{(3)})^{(1)} f(k_{(2)})^{(1)}  k_{(1)} \\
			&= f(k_{(2)})^{(0)} \tilde{f}(k_{(5)})^{(0)} \otimes S^{-1}(k_{(6)}) \tilde{f}(k_{(5)})^{(1)} k_{(4)} S^{-1}(k_{(3)}) f(k_{(2)})^{(1)} k_{(1)} \\
			&= f^{(0)}(k_{(1)}) \tilde{f}^{(0)}(k_{(2)}) \otimes \tilde{f}^{(1)} f^{(1)} \\
			&= (f^{(0)} \circ_{\{AAA\}} \tilde{f}^{(0)})(k) \otimes \tilde{f}^{(1)} f^{(1)}.
		\end{align*}
		
		Finally, we show that \(\{A_{par}, A_{par}\}\) is braided commutative. \\ For all \(k \in H,\) and \(f, \tilde{f} \in \{A_{par}, A_{par}\} \equiv {_{(H)}}\Hom(H, A_{par}),\) we have
		\begin{align*}
			(f^{(0)} &\circ_{\{AAA\}} (f^{(1)} \rightharpoonup \tilde{f}))(k) = f^{(0)}(k_{(1)}) \tilde{f}(k_{(2)}f^{(1)}) \\
			&= f(k_{(2)})^{(0)} \tilde{f}(k_{(4)} S^{-1}(k_{(3)}) f(k_{(2)})^{(1)} k_{(1)}) = f(k_{(2)})^{(0)} \tilde{f}(f(k_{(2)})^{(1)} k_{(1)}) \\
			\overset{\eqref{eqII:relation_ab}}&{=} f(k_{(2)})^{(0)} (f(k_{(2)})^{(1)} \bullet (S(f(k_{(2)})^{(2)}) \bullet \tilde{f}(f(k_{(2)})^{(3)} k_{(1)}) )) \\
			\overset{\eqref{eq:Psibar_image}}&{=} f(k_{(2)})^{(0)} (f(k_{(2)})^{(1)} \bullet \tilde{f}(S(f(k_{(2)})^{(2)})f(k_{(2)})^{(3)} k_{(1)})) \\
			&= f(k_{(2)})^{(0)} (f(k_{(2)})^{(1)} \bullet \tilde{f}( k_{(1)})) \overset{\eqref{eqII:Apar_commutative2}}{=} \tilde{f}(k_{(1)}) f(k_{(2)}) \\
			&= (\tilde{f} \circ_{\{AAA\}} f)(k). \qedhere
		\end{align*}
	\end{proof}

	\subsection{Morita equivalence}
	
	As mentioned before, the composition defined in \eqref{eq:compositionright} leads to the conclusion that the functor \(\{A_{par}, -\} : \HPMod \to \HMod\) lands in the category of left \(\{A_{par}, A_{par}\}\)-modules in \(\HMod,\) which is isomorphic to the category \(H_{glob}\)-modules. A natural question is then whether \(\{A_{par}, -\}\) is an equivalence of categories, that is, are $H_{par}$ and $H_{glob}$ Morita equivalent? Since the algebra $H_{par}$ is isomorphic to the partial smash product $A_{par} \underline{\#} H$ and $H_{glob}$ is defined as the global smash product $\{ A_{par} , A_{par} \} \# H$, the problem regarding their Morita equivalence goes back to the following result proven in \cite[Section 3]{ABenveloping}.
	
	\begin{theorem}
		\label{th:Morita}
		Let $A$ be a partial $H$-module algebra and $(B, \theta)$ a proper globalization of $A$ such that $\theta (A)$ is a two-sided ideal in $B$. Then the partial smash product $A\underline{\#} H$ is Morita equivalent to the global smash product $B\# H$.
	\end{theorem}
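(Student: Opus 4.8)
The plan is to exhibit the partial smash product as a \emph{full corner} of the global smash product and then appeal to the Morita theory of idempotent rings. Write \(S = B\# H\) and \(e = \theta(1_A) \in B\). Since \(\theta\) is multiplicative, \(e^2 = e\), and because \(\theta(A)\) is a two-sided ideal one checks that \(\theta(A) = eB = Be = eBe\), so \(\theta\) identifies \(A\) with the corner \(eBe\). The dilation hypothesis on \((B, T_\theta, \theta)\) together with \eqref{eq:TM} gives the restriction formula \(\theta(h\bullet a) = e\,(h\btright\theta(a))\), and in particular \(\theta(h\bullet 1_A) = e\,(h\btright e)\). Set \(f = e\# 1_H \in S\); it is idempotent because \(1_H \btright e = e\).

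The first step is an algebra isomorphism \(A\,\underline{\#}\,H \cong fSf\). Writing a generator of \(A\,\underline{\#}\,H\) in the form \(a\# h = a(h_{(1)}\bullet 1_A)\otimes h_{(2)}\) via \eqref{eq:smash_rewriting} and applying \(\theta\otimes\id\), I would define \(\Theta(a\,\underline{\#}\,h) = \theta(a)(h_{(1)}\btright e)\# h_{(2)}\). Computing \(f(b\# h)f = eb\,(h_{(1)}\btright e)\# h_{(2)}\) and using \(eb\in eB = \theta(A)\) shows that \(\Theta\) maps onto \(fSf\); injectivity is immediate from injectivity of \(\theta\), and multiplicativity is a direct check against the partial-smash multiplication \eqref{eq:multiplication_smash}, in which the restriction formula converts \(\theta(h_{(1)}\bullet a')\) into \(e(h_{(1)}\btright\theta(a'))\) and the two sides collapse to the same element.

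With the corner identified, I would set up the Morita context consisting of \(S\), \(R := fSf \cong A\,\underline{\#}\,H\), and the bimodules \({}_S(Sf)_R\) and \({}_R(fS)_S\), with the two pairings \(fS\otimes_S Sf \to fSf\) and \(Sf\otimes_R fS\to S\) both given by multiplication in \(S\). Morita equivalence follows once both pairings are surjective. Surjectivity of the first reduces to \(S^2 = S\): the subalgebra \(\theta(A)\) has \(e\) as a unit, whence \(\theta(A)^2 = \theta(A)\), and properness gives \(B = \sum_h h\btright\theta(A)\) with each generator satisfying \(h\btright\theta(a) = (h_{(1)}\btright e)(h_{(2)}\btright\theta(a)) \in B^2\); hence \(B^2 = B\) and therefore \(S^2 = S\), so that \(fS\cdot Sf = fS^2f = fSf\).

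The crux is the fullness \(SfS = S\). Here I would use the antipode to disentangle the diagonal \(H\)-action: for \(b\in B\), \(a\in A\) and \(g,m\in H\), a short computation gives
\[
(b\# g_{(1)})\,(e\# 1_H)\,(\theta(a)\# S(g_{(2)})m) = b\,(g\btright\theta(a))\# m,
\]
because \(e\theta(a)=\theta(a)\) and the combined \(H\)-slot collapses via \(g_{(1)}S(g_{(2)})m = \epsilon(g)m\). Since \(B = B^2 = \sum_h B\,(h\btright\theta(A))\) by properness, the elements \(b(g\btright\theta(a))\# m\) span \(S\), so \(SfS = S\) and \(f\) is a full idempotent. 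Both pairings being surjective, and \(R = fSf\) being unital inside the idempotent ring \(S\) with \(f\) full, the full-idempotent Morita theorem yields an equivalence between the (firm/unital) module category of \(B\# H\) and that of \(fSf \cong A\,\underline{\#}\,H\). I expect this fullness step to be the main obstacle, as it is the only place where the antipode and the properness of the globalization are genuinely combined; the rest is bookkeeping with the module-algebra and smash-product structures.
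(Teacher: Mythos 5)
Your proof is correct and takes essentially the same route as the paper (which recalls the construction of \cite{ABenveloping}): the paper's Morita context is built from the bimodules $M=\Phi(A\otimes H)$ and $N$, which are precisely your $fS$ and, under properness, your $Sf$, with both pairings given by multiplication in $B\# H$, and its key surjectivity identity $(h\btright\theta(a))\otimes k=\left((h_{(1)}\btright\theta(a))\otimes h_{(2)}\right)\left(\theta(1_A)\otimes S(h_{(3)})k\right)$ is the same antipode-collapse computation as your fullness step $SfS=S$. The only difference is presentational: you package the context via the full-idempotent corner theorem for idempotent rings, whereas the paper exhibits the same context explicitly and appeals to the same non-unital Morita theory for the equivalence of module categories.
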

	
	Let us briefly recall the steps of the construction of a Morita context between $A\underline{\#} H$ and $B\# H$.
	First (see \cite[Lemma 4]{ABenveloping}), one considers a linear multiplicative map
	\[
	\Phi : A\otimes H  \rightarrow  B\# H,\quad a\otimes h  \mapsto  \theta (a) \otimes h.
	\]
	The map $\Phi$ restricted to the partial smash product
	\[
	A\underline{\#} H =(A\otimes H)(1_A \otimes 1_H) =\{  a (h_{(1)} \bullet 1_A)\otimes h_{(2)} \mid a \in A, h\in H\} 
	\]
	becomes a monomorphism of algebras. The elements of $A\underline{\#} H$ in the image of $\Phi$ are
	\[
	\theta (a) (h_{(1)} \btright \theta (1_A) ) \otimes h_{(2)}
	\]
	for \(a \in A\) and \(h \in H\).
	
	Define $M=\Phi (A\otimes H) \subseteq B\# H$ and
	\begin{equation}
		\label{eq:N}
		N = \left\langle (h_{(1)} \btright \theta (a)) \otimes h_{(2)} \mid a \in A, h \in H \right\rangle \subseteq B \# H.
	\end{equation}
	Now $M$ is an ($A\underline{\#} H$, $B\# H$)-bimodule and $N$ is a ($B\# H$, $A\underline{\#} H$)-bimodule (\cite[ Proposition 5 and Proposition 6]{ABenveloping}). All these structures are taken within the algebra $B\# H$, induced by the map $\Phi$. 
	The proof of the fact that $N$ is a ($B\# H$, $A\underline{\#} H$)-bimodule uses that \(\theta(A)\) is not just a right ideal, but a two-sided ideal in \(B\). It also relies on \(\theta(1_A)\) being a central idempotent in \(B\), but this follows from \(\theta(A) \trianglelefteq B\): for any \(b \in B,\) both \(b \theta(1_A)\) and \(\theta(1_A)b\) lie in \(\theta(A)\) and are equal to \(\theta(1_A)b\theta(1_A)\), because \(\theta(1_A)\) acts as a unit for \(\theta(A)\).

	Finally, the last ingredients to have a Morita context are the maps 
	\[
	\sigma :M\otimes_{B\# H} N \rightarrow \Phi (A\underline{\#} H) \quad \text{and} \quad \tau :N \otimes_{A\underline{\#} H} M \rightarrow B\# H.
	\]
	The first is a morphism of $A\underline{\#}H$-bimodules and the second is morphism of $B\# H$-bimodules. Both are given by the multiplication in $B\# H$. The associativity between these two maps follows from the associativity of the multiplication in $B\# H$. 
	
	In order to have a Morita equivalence, the maps $\sigma$ and $\tau$ need to be surjective. This is automatically satisfied in this case, because $MN=\Phi (A\underline{\#} H)$ and $NM=B\# H$ by \cite[Proposition 7]{ABenveloping}. The second statement uses implicitly that $B=H\btright \theta (A)$ and follows from the following identity in $B\# H$: for all \(a\in A\) and \(h,k \in H\)
	\[
	(h\btright \theta (a)) \otimes k= \left( (h_{(1)}\btright \theta (a)) \otimes h_{(2)}\right) \left( \theta (1_A ) \otimes S(h_{(3)})k \right).
	\]
	
	Before we apply \cref{th:Morita} to our dilation \(\{A_{par}, A_{par}\}\) of \(A_{par}\), we show that it is a globalization in the sense of \cref{def:globalization}.
	
	\begin{lemma}
		\label{le:multiplicative}
		The map \(\theta : A_{par} \to \{A_{par}, A_{par}\}\) defined by formula \eqref{eq:theta} is multiplicative. 
	\end{lemma}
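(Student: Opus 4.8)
The plan is to prove multiplicativity by evaluating both $\theta(bc)$ and the product $\theta(b)\circ_{\{AAA\}}\theta(c)$ on an arbitrary generator $a \otimes h$ of $A_{par} \otimes H$ and checking that they agree. The two ingredients I would use are, first, the explicit form of $\theta$ from \eqref{eq:theta}, namely $\theta(b)(a \otimes h) = a \cdot (h \bullet b)$; since the left $A_{par}$-action on $A_{par}$ coming from its partial module structure is just the multiplication of $A_{par}$ (as recalled in the Remarks following \cref{cor:biactegory}, where the regular $A_{par}$-bimodule is obtained), this simplifies to $\theta(b)(a \otimes h) = a\,(h \bullet b)$. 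Second, by \cref{le:Apar_convolution} the composition product $\circ_{\{AAA\}}$ on $\{A_{par}, A_{par}\}$ coincides with the convolution product $(f \ast g)(a \otimes h) = f(a \otimes h_{(1)})\, g(1_{A_{par}} \otimes h_{(2)})$, which is far more convenient to compute with than the composition law \eqref{eq:compositionright}.

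With these in hand the statement reduces to a one-line Sweedler computation. On the left-hand side I would apply axiom (PA2) for the partial $H$-module algebra $A_{par}$ to obtain
\[
\theta(bc)(a \otimes h) = a\,(h \bullet (bc)) = a\,(h_{(1)} \bullet b)(h_{(2)} \bullet c),
\]
while on the right-hand side the convolution formula gives
\[
(\theta(b) \circ_{\{AAA\}} \theta(c))(a \otimes h) = \theta(b)(a \otimes h_{(1)})\,\theta(c)(1_{A_{par}} \otimes h_{(2)}) = a\,(h_{(1)} \bullet b)(h_{(2)} \bullet c).
\]
The two expressions coincide for all $a \in A_{par}$ and $h \in H$, so $\theta(bc) = \theta(b) \circ_{\{AAA\}} \theta(c)$ and $\theta$ is multiplicative.

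There is no real obstacle here: once the product is written in convolution form via \cref{le:Apar_convolution}, multiplicativity of $\theta$ is literally the multiplicativity axiom (PA2) of the partial action transported through $\theta$, and the only thing to be careful about is not attacking the composition law directly. I would also remark, for consistency with \cref{def:globalization}, that $\theta$ is \emph{not} unital: one computes $\theta(1_{A_{par}})(a \otimes h) = a\,(h \bullet 1_{A_{par}}) = a\,\varepsilon_h$, which differs from the unit $a \otimes h \mapsto \epsilon(h)\,a$ of $\{A_{par}, A_{par}\}$. This is exactly as it should be, since $\{A_{par}, A_{par}\}$ is only a (possibly non-unital) $H$-module algebra and $\theta(1_{A_{par}})$ is the idempotent realizing the projection $T_\theta$ of the globalization.
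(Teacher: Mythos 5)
Your proof is correct and takes essentially the same route as the paper's: both evaluate on $a \otimes h$, use \cref{le:Apar_convolution} to replace the composition $\circ_{\{AAA\}}$ by the convolution product, and then reduce multiplicativity of $\theta$ to the axiom (PA2) of the partial action of $H$ on $A_{par}$. Your closing observation that $\theta$ is not unital and that $\theta(1_{A_{par}})$ is the idempotent implementing the projection $\mathcal{T}$ is accurate and consistent with the paper, though not needed for the lemma itself.
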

	\begin{proof}
		For any \(a, b, c \in A_{par}\) and \(h \in H\), we have
		\[\theta(ab)(c \otimes h) = c (h \bullet (ab)) = c (h_{(1)} \bullet a) (h_{(2)} \bullet b) = \theta(a)(c \otimes h_{(1)})\, \theta(b)(1_{A_{par}} \otimes h_{(2)}).\]
		By \cref{le:Apar_convolution}, this last expression is equal to \((\theta(a) \circ_{\{AAA\}} \theta(b))(c \otimes h)\).
	\end{proof}
	
	\begin{proposition}
		The pair \((\{ A_{par} ,A_{par} \}, \theta)\) is a globalization of \(A_{par}\). 
	\end{proposition}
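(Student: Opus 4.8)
The plan is to verify, clause by clause, the requirements of \cref{def:globalization} for the pair $(\{A_{par}, A_{par}\}, \theta)$, organizing everything around a single product computation in $\{A_{par}, A_{par}\}$.

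First I would dispose of the structural data. The space $\{A_{par}, A_{par}\}$ is a (not necessarily unital) $H$-module algebra: its multiplication is the composition $\circ_{\{AAA\}}$, which coincides with the convolution product by \cref{le:Apar_convolution} and which is a morphism in $\HMod$, so $h \rightharpoonup (f \ast g) = (h_{(1)} \rightharpoonup f) \ast (h_{(2)} \rightharpoonup g)$. The map $\theta$ is multiplicative by \cref{le:multiplicative} and injective because $\kappa \circ \theta = \Id_{A_{par}}$, as observed before \cref{propII:Apar_min_dilation}. What remains are conditions (i) and (ii).

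The core of the proof is the identity
\[\theta(b) \ast f = \theta\bigl(b\,f(1_{A_{par}} \otimes 1_H)\bigr) = \theta\bigl(b\,\kappa(f)\bigr), \qquad b \in A_{par},\ f \in \{A_{par}, A_{par}\}.\]
To establish it I would use \cref{prop:Psibar} and \cref{cor:iso_Psi} to write $g = f(1_{A_{par}} \otimes -)$, a partially $H$-linear map with $f(a \otimes h) = a\,g(h)$. The convolution formula of \cref{le:Apar_convolution} then gives $(\theta(b) \ast f)(a \otimes h) = a\,(h_{(1)} \bullet b)\,g(h_{(2)})$, whereas \eqref{eq:theta} together with multiplicativity of the partial action (axiom (PA2)) gives $\theta(b\,g(1_H))(a \otimes h) = a\,(h_{(1)} \bullet b)\,(h_{(2)} \bullet g(1_H))$. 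Partial linearity \eqref{eq:Psibar_image3} rewrites $h_{(2)} \bullet g(1_H) = \varepsilon_{h_{(2)}}\,g(h_{(3)})$, so the two sides agree once one checks that the factor $\varepsilon_{h_{(2)}}$ is absorbed, i.e.\ that $(h_{(1)} \bullet b)\,\varepsilon_{h_{(2)}}$ equals $h_{(1)} \bullet b$ modulo the spectator $H$-leg fed to $g$. Expanding $h_{(1)} \bullet b = [h_{(1)}]\,b\,[S(h_{(2)})]$ and $\varepsilon$ into bracket generators, this reduces to the mirror of \eqref{eq:triple_bracket}, namely $[S(h_{(1)})][h_{(2)}][S(h_{(3)})] = [S(h)]$, which follows from \ref{PR5} and \ref{PR1} exactly as \eqref{eq:triple_bracket} follows from \ref{PR3} and \ref{PR1}.

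With this identity in hand both remaining conditions fall out. Condition (i) is immediate: $\theta(b) \ast f = \theta(b\,\kappa(f)) \in \theta(A_{par})$, so $\theta(A_{par})$ is a right ideal of $\{A_{par}, A_{par}\}$. For condition (ii) I specialize to $b = 1_{A_{par}}$ and obtain $T_\theta(f) = \theta(1_{A_{par}}) \ast f = \theta(\kappa(f)) = (\theta \circ \kappa)(f) = \Tt(f)$, so that the projection $T_\theta$ of \cref{def:globalization} is exactly $\Tt$ of \eqref{eq:calT}; hence $(\{A_{par}, A_{par}\}, T_\theta, \theta)$ is a dilation of $A_{par}$ by \cref{propII:Apar_min_dilation}. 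I expect the only real work to be the central identity $\theta(b) \ast f = \theta(b\,\kappa(f))$, where the delicate point is the Sweedler bookkeeping and the mirror bracket identity; once that is settled, the right-ideal property and the coincidence $T_\theta = \Tt$ are a special case and a corollary.
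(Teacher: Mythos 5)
Your proof is correct, and its skeleton coincides with the paper's: both arguments delegate injectivity (via \(\kappa \circ \theta = \Id\)), multiplicativity (\cref{le:multiplicative}) and the dilation property (\cref{propII:Apar_min_dilation}) to earlier results, and both reduce what remains to a single identity --- yours, \(\theta(b) \ast f = \theta(b\,\kappa(f))\), is literally the paper's \(\theta(b) \ast f = \theta(f(b \otimes 1_H))\), since \(f(b \otimes 1_H) = b\,\kappa(f)\) by \(A_{par}\)-linearity of \(f\) and \eqref{eq:AMX}; your observation that \(T_\theta = \Tt\) is the specialization \(b = 1_{A_{par}}\) tidily packages what the paper presents as two separate computations. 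The genuine difference lies in how the identity is proved. The paper, after pulling \(h_{(1)} \bullet b\) inside \(f\) by \(A_{par}\)-linearity, recognizes \((h_{(1)} \bullet b) \otimes h_{(2)} = h \bullet (b \otimes 1_H)\) as the diagonal partial action and invokes the \(\bullet\)-equivariance of \(f\) (it is a morphism of partial modules), finishing in three lines. You instead use only the factorization \(f(a \otimes h) = a\,g(h)\) with \(g = \Psi(f)\) partially linear (\cref{prop:Psibar}, \cref{cor:iso_Psi}) together with \eqref{eq:Psibar_image3}, and must then absorb the stray factor \(\varepsilon_{h_{(2)}}\); your mirror bracket identity \([S(h_{(1)})][h_{(2)}][S(h_{(3)})] = [S(h)]\) is indeed valid (by \ref{PR5} and \ref{PR1}, exactly as you say) and does the job. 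Note, however, that the excursion into bracket generators is avoidable: the absorption \((h_{(1)} \bullet b)\,\varepsilon_{h_{(2)}} \otimes h_{(3)} = (h_{(1)} \bullet b) \otimes h_{(2)}\) is just axiom (PA2) for the partial module algebra \(A_{par}\) applied to the leg \(h_{(1)}\), since \((k_{(1)} \bullet b)(k_{(2)} \bullet 1_{A_{par}}) = k \bullet (b 1_{A_{par}}) = k \bullet b\). In summary, the paper's route buys brevity by exploiting the full partial-module morphism property of \(f\), while yours stays entirely inside the \(\Psi\)-picture of \(\{A_{par}, A_{par}\}\) as partially \(H\)-linear maps \(H \to A_{par}\), at the cost of extra Sweedler bookkeeping; both are sound.
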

	\begin{proof}
		Take \(f \in \{A_{par}, A_{par}\}\). We start by showing that \(\theta(1_{A_{par}}) \circ_{\{AAA\}} f = \mathcal{T}(f)\), with \(\mathcal{T}\) as defined in formula \eqref{eq:calT}. 
		For any \(a \in A_{par}\) and \(h \in H,\) we have
		\begin{align*}
			(\theta(1_{A_{par}}) \circ_{\{AAA\}} f)(a \otimes h) &= \theta(1_{A_{par}})(a \otimes h_{(1)}) \, f(1_{A_{par}} \otimes h_{(2)}) = a (h_{(1)} \bullet 1_{A_{par}}) \, f(1_{A_{par}} \otimes h_{(2)}) \\
			&= a \varepsilon_{h_{(1)}} \, f(1_{A_{par}} \otimes h_{(2)}) \overset{(*)}{=} f(a \varepsilon_{h_{(1)}} \cdot (1_{A_{par}} \otimes h_{(2)})) \\
			\overset{\eqref{eq:AMX}}&{=} f(a\varepsilon_{h_{(1)}} \otimes h_{(2)}) = \mathcal{T}(f)(a \otimes h).
		\end{align*}
		In equality \((*)\), we used that since \(f\) is a morphism of partial modules, it is in particular \(A_{par}\)-linear.
		
		Next, we show that \(\theta(A_{par})\) is a right ideal in \(\{A_{par}, A_{par}\}\). Take $a=\varepsilon_{k^1} \ldots \varepsilon_{k^n}, b\in A_{par}$, $h\in H$ and $f\in \{ A_{par} , A_{par} \}$. 
		We calculate
		\begin{align*}
			(\theta (a) \ast f) (b\otimes h) &= \theta (a) (b\otimes h_{(1)}) \, f(1_{A_{par}} \otimes h_{(2)}) = b (h_{(1)} \bullet a)f(1_{A_{par}} \otimes h_{(2)}) \\
			&= bf((h_{(1)} \bullet a) \cdot (1_{A_{par}} \otimes h_{(2)})) \overset{\eqref{eq:AMX}}{=} bf((h_{(1)} \bullet a) \otimes h_{(2)}) =  bf(h\bullet (a\otimes 1_H)) \\
			&= b(h\bullet f(a\otimes 1_H ))  =  \theta (f(a\otimes 1_H)) (b\otimes h).
		\end{align*}
		We can conclude that \(\theta(A_{par})\) is a right ideal in \(\{A_{par}, A_{par}\}\).
		
		Together with \cref{propII:Apar_min_dilation} and \cref{le:multiplicative}, these computations prove that \((\{ A_{par} ,A_{par} \}, \theta)\) is a globalization of \(A_{par}\).
	\end{proof}
	
	\begin{theorem} \label{H_par_Morita_H_glob} 
		Suppose that the globalization $(\{ A_{par} ,A_{par} \}, \theta)$ of $A_{par}$ is proper, \\ i.\,e.\ $\{ A_{par} , A_{par} \} =H\rightharpoonup \theta (A_{par})$. Then the algebra
		\[
		H_{glob}= \{ A_{par} , A_{par} \} \# H
		\]
		is Morita equivalent to 
		\[
		H_{par} \cong A_{par} \underline{\#} H.
		\]
	\end{theorem}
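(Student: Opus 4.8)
The plan is to deduce the statement from \cref{th:Morita}, applied to the partial module algebra $A = A_{par}$ and the pair $(B, \theta) = (\{A_{par}, A_{par}\}, \theta)$. Since $H_{par} \cong A_{par} \underline{\#} H$ via $\sigma$ from \eqref{eq:sigma}, and $H_{glob} = \{A_{par}, A_{par}\} \# H$ holds by definition, it suffices to verify the hypotheses of \cref{th:Morita} for this pair. The foregoing proposition shows that $(\{A_{par}, A_{par}\}, \theta)$ is a globalization of $A_{par}$, and by assumption it is proper; the only remaining point is that $\theta(A_{par})$ is a \emph{two-sided} ideal of $\{A_{par}, A_{par}\}$. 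As the same proposition already gives the right ideal property, I only need to establish that $\theta(A_{par})$ is a left ideal. The cleanest route is to prove that $\theta(1_{A_{par}})$ is a central idempotent: idempotency is immediate from the multiplicativity of $\theta$ (\cref{le:multiplicative}), and centrality yields $\theta(A_{par}) = \theta(1_{A_{par}}) \ast \{A_{par}, A_{par}\} = \{A_{par}, A_{par}\} \ast \theta(1_{A_{par}})$ (using $\theta(1_{A_{par}}) \ast f = \mathcal{T}(f)$ and $\operatorname{Im}\mathcal{T} = \theta(A_{par})$ from \cref{propII:Apar_min_dilation}), which is exactly the two-sidedness needed to run the Morita context construction recalled after \cref{th:Morita}.

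To prove centrality I would use the convolution description of the product (\cref{le:Apar_convolution}) together with the identification of $f \in \{A_{par}, A_{par}\}$ with the partially linear map $\hat f = f(1_{A_{par}} \otimes -) : H \to A_{par}$. Under this identification $\widehat{\theta(1_{A_{par}})}(h) = \varepsilon_h$, so that $\theta(1_{A_{par}}) \ast f = \mathcal{T}(f)$ (cf.\ \eqref{eq:calT}) corresponds to $h \mapsto \varepsilon_{h_{(1)}} \hat f(h_{(2)})$, while $f \ast \theta(1_{A_{par}})$ corresponds to $h \mapsto \hat f(h_{(1)}) \varepsilon_{h_{(2)}}$. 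Centrality thus reduces to the equality $\hat f(h_{(1)}) \varepsilon_{h_{(2)}} = \varepsilon_{h_{(1)}} \hat f(h_{(2)})$. Invoking properness to write $\hat f(h) = \sum_i (h h_i) \bullet a_i$, this decomposes term by term into the identity
\begin{equation}
	\label{eq:diamond_sketch}
	(h_{(1)} u \bullet a)\, \varepsilon_{h_{(2)}} = \varepsilon_{h_{(1)}}\, (h_{(2)} u \bullet a), \qquad h, u \in H,\ a \in A_{par},
\end{equation}
inside $A_{par}$.

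Proving \eqref{eq:diamond_sketch} is where I expect the real obstacle to lie, precisely because the generators $\varepsilon_h$ of $A_{par}$ do not commute in general. Both sides of \eqref{eq:diamond_sketch} are linear in $a$, and a short computation with the partial module algebra axioms (PA2) shows that the set of $a$ satisfying \eqref{eq:diamond_sketch} is closed under products; hence it suffices to treat the algebra generators $a = \varepsilon_k$. For these I would expand both sides using $g \bullet \varepsilon_k = \varepsilon_{g_{(1)} k} \varepsilon_{g_{(2)}}$: the left-hand side is literally $\varepsilon_{h_{(1)} u_{(1)} k} \varepsilon_{h_{(2)} u_{(2)}} \varepsilon_{h_{(3)}}$, and the right-hand side $\varepsilon_{h_{(1)}} \varepsilon_{h_{(2)} u_{(1)} k} \varepsilon_{h_{(3)} u_{(2)}}$ is brought to the same normal form by two applications of the commutation relation $\varepsilon_{g_{(1)} m} \varepsilon_{g_{(2)}} = \varepsilon_{g_{(1)}} \varepsilon_{g_{(2)} m}$ from \cite[Lemma 4.7]{ABVparreps}. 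With \eqref{eq:diamond_sketch} established, $\theta(1_{A_{par}})$ is central, $\theta(A_{par})$ is a two-sided ideal, and all hypotheses of \cref{th:Morita} hold; applying it gives the desired Morita equivalence between $A_{par} \underline{\#} H \cong H_{par}$ and $\{A_{par}, A_{par}\} \# H = H_{glob}$.
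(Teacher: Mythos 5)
Your proposal is correct, and it takes a genuinely different route from the paper's proof. Both arguments reduce the theorem to showing that $\theta(A_{par})$ is a left (hence two-sided) ideal of $\{A_{par},A_{par}\}$, after which \cref{th:Morita} applies; the difference lies in how this is done. The paper proves it by a direct, unconditional computation: for any $f$ and any generator $\varepsilon_k$ it rewrites $(f\ast\theta(\varepsilon_k))(b\otimes h)$ using \ref{PR5}, \eqref{eq:AMX}, \eqref{eq:leftAaction}, \eqref{eq:rightAaction}, obtaining the explicit formula $f\ast\theta(a)=\theta(f(1_{A_{par}}\otimes 1_H)a)$ \eqref{eq:theta_leftideal}, which requires no properness (properness is used only to invoke \cref{th:Morita}) and is reused later in the proof of \cref{prop:NHpar}. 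You instead reduce two-sidedness to centrality of the idempotent $\theta(1_{A_{par}})$ --- a valid reduction, since $\theta(1_{A_{par}})\ast\{A_{par},A_{par}\}=\mathcal{T}(\{A_{par},A_{par}\})=\theta(A_{par})$ and $\theta(a)=\theta(1_{A_{par}})\ast\theta(a)$ --- and then use properness to decompose $f=\sum_i h_i\rightharpoonup\theta(a_i)$, reducing centrality to the identity $(h_{(1)}u\bullet a)\,\varepsilon_{h_{(2)}}=\varepsilon_{h_{(1)}}\,(h_{(2)}u\bullet a)$ in $A_{par}$, which you check on the generators $\varepsilon_k$ and extend multiplicatively; all of these steps are sound. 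What you apparently did not notice is that this identity is exactly axiom (PA3) of the symmetric partial action of $H$ on $A_{par}$: since $\varepsilon_h=h\bullet 1_{A_{par}}$, both sides equal $h\bullet(u\bullet a)$, and $A_{par}$ is a (symmetric) partial $H$-module algebra, so the step you flagged as the ``real obstacle'' is available for free, and your generators-plus-products verification collapses to one line. In exchange for being conceptually lighter, your argument consumes the properness hypothesis at a step where the paper's computation shows it is not needed, and it does not directly produce the formula \eqref{eq:theta_leftideal} that the paper relies on later --- though it can be recovered from your centrality statement: $f\ast\theta(a)=\mathcal{T}(f\ast\theta(a))=\theta\left((f\ast\theta(a))(1_{A_{par}}\otimes 1_H)\right)=\theta(f(1_{A_{par}}\otimes 1_H)a)$.
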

	\begin{proof}
		In order to apply \cref{th:Morita}, we have to show that $\theta(A_{par})$ is a left ideal in $\{ A_{par} , A_{par} \}$. 
		Take \(f \in \{A_{par}, A_{par}\}, b \in A_{par}\) and \(h, k \in H\). Then
		\begin{align*}
			(f * \theta(\varepsilon_k))(b \otimes h) &= f(b \otimes h_{(1)}) \, \theta(\varepsilon_k)(1_{A_{par}} \otimes h_{(2)}) = b f(1_{A_{par}} \otimes h_{(1)}) (h_{(2)} \bullet \varepsilon_k) \\
			\overset{\eqref{eq:partial_module_Apar}}&{=} b f(1_{A_{par}} \otimes h_{(1)}) \varepsilon_{h_{(2)}k} \varepsilon_{h_{(3)}} = b f((1_{A_{par}} \otimes h_{(1)}) \cdot \varepsilon_{h_{(2)}k} \varepsilon_{h_{(3)}}) \\
			\overset{\eqref{eq:rightAaction}}&{=} b f(h_{(5)} \bullet (S^{-1}(h_{(4)}) \bullet (h_{(3)}k_{(2)} \bullet (S^{-1}(k_{(1)}) S^{-1}(h_{(2)}) \bullet (1_{A_{par}} \otimes h_{(1)}))))) \\
			\overset{\ref{PR5}}&{=} b f(h_{(3)} \bullet (k_{(2)} \bullet (S^{-1}(k_{(1)}) S^{-1}(h_{(2)}) \bullet (1_{A_{par}} \otimes h_{(1)})))) \\
			\overset{\ref{PR5}}&{=} b f(h_{(3)} \bullet (k_{(2)} \bullet (S^{-1}(k_{(1)}) \bullet (S^{-1}(h_{(2)}) \bullet (1_{A_{par}} \otimes h_{(1)}))))) \\
			\overset{\eqref{eq:AMX}}&{=} b f(h_{(3)} \bullet (k_{(2)} \bullet (S^{-1}(k_{(1)}) \bullet (\varepsilon_{S^{-1}(h_{(1)})} \otimes 1_H)))) \\
			\overset{\eqref{eq:rightAaction}}&{=} b f(h_{(3)} \bullet ((\varepsilon_{S^{-1}(h_{(1)})} \otimes 1_H) \cdot \varepsilon_k)) \\
			\overset{(\ref{eq:AMX}, \ref{eq:leftAaction})}&{=} b f(h_{(3)} \bullet (S^{-1}(h_{(2)}) \bullet (h_{(1)} \bullet ((1_{A_{par}} \otimes 1_H) \cdot \varepsilon_k)))) \\
			\overset{\ref{PR5}}&{=} b f(h \bullet ((1_{A_{par}} \otimes 1_H) \cdot \varepsilon_k)) = \theta(f((1_{A_{par}} \otimes 1_H) \cdot \varepsilon_k))(b \otimes h).
		\end{align*}
		Since \(A_{par}\) is generated by the elements \(\varepsilon_k,\)
		we obtain that
		\begin{equation}
			\label{eq:theta_leftideal}
			f * \theta(a) = \theta(f(1_{A_{par}} \otimes 1_H) a)
		\end{equation}
		for all \(f \in \{A_{par}, A_{par}\}\) and \(a \in A_{par}\), so \(\theta(A_{par})\) is a left ideal in \(\{A_{par}, A_{par}\}\).
		
		Since we suppose that the globalization $(\{ A_{par} ,A_{par} \}, \theta)$ is proper, \(H_{glob} = \{A_{par}, A_{par}\} \# H\) is Morita equivalent to \(H_{par} \cong A_{par} \underline{\#} H\) by \cref{th:Morita}.
	\end{proof}
	
	Recall that the category of modules over $H_{glob}= \{ A_{par} , A_{par} \} \# H$ is isomorphic to the category of modules over the algebra object $\{ A_{par} , A_{par} \}$ in $\HMod$, and the previous theorem immediately implies that this category is furthermore equivalent to
	the category ${_{H_{par}}\Mod} \cong \HPMod$ of partial $H$-modules. Our last result of this section shows that in case $H$ is finite-dimensional, this equivalence is realized exactly by the enrichment functor $\{A_{par},-\}$.

	\begin{proposition}
		\label{prop:NHpar}
		Let \(H\) be a finite-dimensional Hopf algebra and suppose that the globalization $(\{ A_{par} ,A_{par} \}, \theta)$ of $A_{par}$ is proper. Then
		\[\{A_{par}, -\} : \HPMod \to {_{\{A_{par}, A_{par}\}}}(\HMod)\]
		is an equivalence of categories.
	\end{proposition}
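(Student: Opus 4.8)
The plan is to identify the functor $\{A_{par}, -\}$, regarded as landing in ${_{\{A_{par}, A_{par}\}}}(\HMod) \cong {_{H_{glob}}}\Mod$, with the Morita equivalence functor furnished by \cref{H_par_Morita_H_glob}. Under the assumption that the globalization is proper, that theorem (via \cref{th:Morita}) produces a strict Morita context between $H_{par} \cong A_{par} \underline{\#} H$ and $H_{glob} = \{A_{par}, A_{par}\} \# H$, with bimodules $M = \Phi(A_{par} \otimes H)$ (an $(H_{par}, H_{glob})$-bimodule) and $N$ (an $(H_{glob}, H_{par})$-bimodule) whose tensor products recover $H_{par}$ and $H_{glob}$. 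In such a context the functor $\Hom_{H_{par}}(M, -) : {_{H_{par}}}\Mod \to {_{H_{glob}}}\Mod$ is naturally isomorphic to $N \otimes_{H_{par}} -$ and is hence an equivalence; so it suffices to exhibit a natural isomorphism $\{A_{par}, -\} \cong \Hom_{H_{par}}(M, -)$ that respects the $H_{glob}$-module structures.

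The first step is to identify the left $H_{par}$-module $M$ with $A_{par} \otimes H$. Since $H_{glob} = \{A_{par}, A_{par}\} \# H$ equals $\{A_{par}, A_{par}\} \otimes H$ as a vector space, and $\theta$ is injective (it is the structure map of a globalization, \cref{def:globalization}), the map $\Phi : A_{par} \otimes H \to H_{glob}$, $a \otimes h \mapsto \theta(a) \# h$, is injective; being multiplicative and restricting to the algebra embedding $H_{par} \cong A_{par} \underline{\#} H \hookrightarrow H_{glob}$, it is an isomorphism of left $H_{par}$-modules onto $M$. Here one uses that the left $H_{par}$-action on $A_{par} \otimes H$, namely left multiplication by $1_{A_{par}} \# h$ inside the smash product, coincides with the diagonal partial action $h \bullet (a \otimes k) = [h_{(1)}]a[S(h_{(2)})] \otimes h_{(3)}k$ (a short check using \eqref{eq:triple_bracket}). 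Consequently, for every partial module $N$,
\[\{A_{par}, N\} = \Hom_{\HPMod}(A_{par} \otimes H, N) = \Hom_{H_{par}}(A_{par} \otimes H, N) \cong \Hom_{H_{par}}(M, N),\]
naturally in $N$.

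The main work — and the step I expect to be the genuine obstacle — is to verify that this bijection is $H_{glob}$-linear. The left $H_{glob}$-module structure on $\Hom_{H_{par}}(M, N)$ comes from right multiplication by $H_{glob}$ on $M$; this preserves $M$ precisely because $\theta(A_{par})$ is a right ideal of $\{A_{par}, A_{par}\}$, so that $M = \theta(A_{par}) \# H$ is a right ideal of $H_{glob}$. The claim is that, transporting this right action along $\Phi$ to $A_{par} \otimes H$ and precomposing, one recovers exactly the structure on $\{A_{par}, N\}$ given by the $H$-action $\rightharpoonup$ of \eqref{eq:Hmod_righthom} together with the composition action $\circ_{\{AAN\}}$ of \eqref{eq:compositionright}; concretely, right multiplication by $\psi \# l$ should reproduce $\psi \circ_{\{AAN\}} (l \rightharpoonup -)$. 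This is where the explicit formulas for $\sigma$ \eqref{eq:sigma}, for $\tau$ (\cref{leII:projtoHpar}) and for the smash multiplication must be combined, and where finite-dimensionality of $H$ enters through the identifications underlying $\{A_{par}, A_{par}\}$ (\cref{leII:Theta}, \cref{thII:AparAparYD}). Once this compatibility is settled, $\{A_{par}, -\} \cong \Hom_{H_{par}}(M, -)$ as functors valued in ${_{H_{glob}}}\Mod \cong {_{\{A_{par}, A_{par}\}}}(\HMod)$, and the Morita equivalence of \cref{H_par_Morita_H_glob} completes the proof.
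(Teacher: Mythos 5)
Your proposal is correct, but it follows a genuinely different route from the paper's. The paper identifies \(\{A_{par}, -\}\) with \(N \otimes_{H_{par}} -\) built on the \emph{second} Morita bimodule \(N\) of \eqref{eq:N}: it first proves that \(\Xi : D \Rightarrow \{A_{par}, -\}\) is a natural isomorphism (properness gives \(\Xi_{A_{par}}\), then \cref{leII:Theta} and results of \cite{ABVdilations} handle \(\Xi_{A_{par} \otimes H}\) and, via naturality, general \(\Xi_M\)), and then shows by a lengthy computation that \(N \cong \{A_{par}, H_{par}\} \cong \overline{H_{par}}\) as \((H_{glob}, H_{par})\)-bimodules, concluding via \cref{th:Morita}. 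You instead work with the \emph{first} Morita bimodule \(M = \Phi(A_{par} \otimes H)\) and identify \(\{A_{par}, -\} = \Hom_{\HPMod}(A_{par} \otimes H, -)\) with \(\Hom_{H_{par}}(M, -)\), which in a strict Morita context is automatically naturally isomorphic to \(N \otimes_{H_{par}} -\), hence an equivalence. All your intermediate claims are true: \(\Phi = \theta \otimes \id_H\) is an isomorphism of left \(H_{par}\)-modules onto \(M\) (multiplicativity of \(\Phi\) together with the fact that left multiplication by \(1_{A_{par}} \# h\) in the partial smash product is the diagonal action), and the step you flag as the ``genuine obstacle'' is in fact a short computation already available in the paper: the right-ideal computation in the proof that \((\{A_{par}, A_{par}\}, \theta)\) is a globalization gives \(\theta(a) * f = \theta(f(a \otimes 1_H))\), whence
\[
\Phi(a \otimes k) \cdot (\psi \# l) = \theta(a) * (k_{(1)} \rightharpoonup \psi) \otimes k_{(2)} l = \theta\bigl(\psi(a \otimes k_{(1)})\bigr) \otimes k_{(2)} l = \Phi\bigl(\psi(a \otimes k_{(1)}) \otimes k_{(2)} l\bigr),
\]
so precomposition with \(\Phi\) transports the \(H_{glob}\)-action on \(\Hom_{H_{par}}(M, N)\) exactly to \(\psi \circ_{\{AAN\}} (l \rightharpoonup -)\), as you predicted. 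What your route buys: it bypasses the dilation functor \(D\), the transformation \(\Xi\), \cref{leII:Theta}, the cited results of \cite{ABVdilations}, and the computation of \(\overline{\Theta}\); moreover nothing in it actually uses finite-dimensionality of \(H\) --- your remark that finite-dimensionality enters through \cref{leII:Theta} and \cref{thII:AparAparYD} is the one inaccuracy, since neither is needed on your route --- so you in fact prove the statement under the properness hypothesis alone. What the paper's route buys: the intermediate fact \(D \cong \{A_{par}, -\}\), i.e.\ that the standard dilation is computed by the enrichment functor, which is of independent interest and is precisely what ties the equivalence back to globalization.
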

	\begin{proof}
		We start by showing that under the hypotheses of the proposition, the dilation functor \(D\) is isomorphic to \(\{A_{par}, -\}\).
		Since \((\{ A_{par} ,A_{par} \}, \theta)\) is proper, \(D(A_{par}) \cong \{A_{par}, A_{par}\}\) and \(\Xi_{A_{par}}\) \eqref{eqII:Xi} is an isomorphism. By \cite[Theorem 5.7]{ABVdilations}, \(D(A_{par} \otimes H) \cong D(A_{par}) \otimes H\) via the map 
		\[\zeta\left(\sum_i h_i \rightharpoonup \varphi(a_i \otimes k_i)\right) = \sum_i h_{i(1)} \rightharpoonup \varphi(a_i) \otimes h_{i(2)}k_i.\]
		By \cref{leII:Theta}, \(\{A_{par}, A_{par} \otimes H\} \cong \{A_{par}, A_{par}\} \otimes H\) too, via \(\Theta\). Since
		\begin{gather*}
			(\Theta \circ (\Xi_{A_{par}} \otimes H) \circ \zeta)\left(\sum_i h_i \rightharpoonup \varphi(a_i \otimes k_i)\right)(b \otimes l) = \sum_i (b \cdot (l_{(1)}h_{i(1)} \bullet a_i) \otimes l_{(2)}h_{i(2)} k_i \\
			\overset{(\ref{eq:AMX}, \ref{actegory1})}{=} b \cdot \sum_i lh_i \bullet (a_i \otimes k_i) = \Xi_{A_{par} \otimes H}\left(\sum_i h_i \rightharpoonup \varphi(a_i \otimes k_i)\right)(b \otimes l),
		\end{gather*}
		\(\Xi_{A_{par} \otimes H}\) is an isomorphism. For any partial module \(M\) and any \(f \in \{A_{par}, M\}\), we have \(f = \{A_{par}, f\}(\Id_{A_{par} \otimes H}),\) so 
		\[\Xi_M(D(f)(\Xi^{-1}_{A_{par} \otimes H}(\Id_{A_{par} \otimes H}))) = f\]
		by the naturality of \(\Xi\). This shows that \(\Xi_M\) is surjective, and hence an isomorphism by \cref{cor:Xi}.
		We conclude that \(D \cong \{A_{par}, -\}\). Since it is a right adjoint, it preserves kernels, so by \cite[Proposition 5.2]{ABVdilations}, \(D \cong \overline{H_{par}} \otimes_{H_{par}} -\). Here \(\overline{H_{par}}\) is an \((H, H_{par})\)-bimodule with actions
		\begin{equation}
			\label{eq:HHparbimodule}
			k \rightharpoonup (h \rightharpoonup \varphi(x)) \btleft y = (kh) \rightharpoonup \varphi(xy)
		\end{equation}
		for \(h, k \in H\) and \(x, y \in H_{par}\) (see \cite[Lemma 5.1]{ABVdilations}. The right \(H_{par}\)-module structure can be transferred to \(\{A_{par}, H_{par}\}\) using the \(H\)-linear isomorphism \(\Xi_{H_{par}} : \overline{H_{par}} \to \{A_{par}, H_{par}\}\); it is given by
		\((f \btleft y)(a \otimes h) = f(a \otimes h) y\) for \(f \in \{A_{par}, H_{par}\}, y \in H_{par}, a \in A_{par}\) and \(h \in H\). Hence \(\{A_{par}, -\} \cong D \cong \{A_{par}, H_{par}\} \otimes_{H_{par}} -\) as well. 
		
		We will exploit the Morita equivalence obtained in \cref{th:Morita} to prove the statement of the proposition. 
		We claim that the \((\{A_{par}, A_{par}\}\#H, H_{par})\)-bimodule \(N\) \eqref{eq:N} is isomorphic to \(\{A_{par}, H_{par}\}\). 
		Recall that \(N\) is generated as a vector space by the elements \[n = h_{(1)} \rightharpoonup \theta(a) \otimes h_{(2)} \in \{A_{par}, A_{par}\}\otimes H\] for \(a \in A_{par}\) and \(h \in H\). Applying the isomorphism \(\Theta\) \eqref{eq:Theta} to \(n\) we obtain the element in \(\{A_{par}, A_{par} \otimes H\}\) defined by
		\begin{equation*}
			\Theta(n)(b \otimes k) = (h_{(1)} \rightharpoonup \theta(a))(b \otimes k_{(1)}) \otimes k_{(2)}h_{(2)} = b(k_{(1)}h_{(1)} \bullet a) \otimes k_{(2)}h_{(2)}. 
		\end{equation*}
		We see that \(\Theta(n)(b \otimes k)\) lies in fact in \((A_{par} \otimes H)(1_{A_{par}} \otimes 1_H) = A_{par} \underline{\#} H \cong H_{par}\), so \(\Theta\) induces a well-defined and injective map \(\overline{\Theta} : N \to \{A_{par}, H_{par}\}\), where
		\begin{align}
			\overline{\Theta}(n)(b \otimes k) &= b(k_{(1)}h_{(1)} \bullet a)[k_{(2)}h_{(2)}] \overset{\eqref{eq:partial_module_Apar}}{=} b[k_{(1)}h_{(1)}] a [S(k_{(2)}h_{(2)})][k_{(3)}h_{(3)}] \nonumber\\
			\overset{(*)}&{=} b[k_{(1)}h_{(1)}] [S(k_{(2)}h_{(2)})][k_{(3)}h_{(3)}] a \overset{\eqref{eq:triple_bracket}}{=} b[kh]a = b (h \rightharpoonup \varphi(a))(k). \label{eq:Thetabar}
		\end{align}
		In equality \((*)\), we used that for all \(l \in H\), \(\tilde{\varepsilon_l} = [S(l_{(1)})][l_{(2)}]\) commutes with every element of \(A_{par}\) by \cite[Lemma 4.7(g)]{ABVparreps}.
		A similar computation shows that in \(\overline{H_{par}}\), we have the equality
		\begin{equation} \label{eq:equality_Hparbar} h \rightharpoonup \varphi(a[k]) = hk_{(2)} \rightharpoonup \varphi(S^{-1}(k_{(1)}) \bullet a). \end{equation}
		Since \(H_{par}\) is generated as a vector space by elements of the form \(a[k]\) for \(a \in A_{par}\) and \(k \in H,\) and any \(f \in \{A_{par}, H_{par}\} \cong \overline{H_{par}}\) is completely deterined by \(f(1_{A_{par}} \otimes k)\) (\cref{prop:Psibar}), \eqref{eq:Thetabar} and \eqref{eq:equality_Hparbar} together show that \(\overline{\Theta}\) surjective too.
		
		Take \(f \in \{A_{par}, A_{par}\}\) and \(k \in H\). By \cite[Proposition 5]{ABenveloping}, 
		\begin{align*} 
			(f \# k) \cdot n &= k_{(2)} h_{(2)} \rightharpoonup \left( (S^{-1}(k_{(1)}h_{(1)}) \rightharpoonup f) * \theta(a)\right) \otimes k_{(3)}h_{(3)} \\
			\overset{\eqref{eq:theta_leftideal}}&{=} k_{(2)} h_{(2)} \rightharpoonup \theta(f(1_{A_{par}} \otimes S^{-1}(k_{(1)} h_{(1)}))a) \otimes k_{(3)}h_{(3)}.
		\end{align*}
		Hence
		\begin{align*}
			\overline{\Theta}((f \# k) \cdot n)(c \otimes l) &= c [lk_{(2)}h_{(2)}] f(1_{A_{par}} \otimes S^{-1}(k_{(1)}h_{(1)})) a \\
			\overset{\eqref{eq:triple_bracket}}&{=} c [l_{(2)}k_{(3)}h_{(3)}]\varepsilon_{S^{-1}(l_{(1)} k_{(2)} h_{(2)})} f(1_{A_{par}} \otimes S^{-1}(k_{(1)}h_{(1)})) a \\
			\overset{\eqref{eq:AMX}}&{=} c [l_{(4)}k_{(3)}h_{(3)}] f(\varepsilon_{S^{-1}(l_{(3)} k_{(2)} h_{(2)})} \otimes S^{-1}(l_{(2)}k_{(1)}h_{(1)})l_{(1)}) a \\
			&= c [l_{(3)}k_{(2)}h_{(2)}] (S^{-1}(l_{(2)}k_{(1)}h_{(1)}) \bullet f(1_{A_{par}} \otimes l_{(1)})) a \\
			&= c [l_{(4)}k_{(3)}h_{(3)}] [S^{-1}(l_{(3)}k_{(2)}h_{(2)})] f(1_{A_{par}} \otimes l_{(1)})) [l_{(2)}k_{(1)}h_{(1)}] a \\
			&= c  f(1_{A_{par}} \otimes l_{(1)})) [l_{(4)}k_{(3)}h_{(3)}] [S^{-1}(l_{(3)}k_{(2)}h_{(2)})] [l_{(2)}k_{(1)}h_{(1)}] a \\
			\overset{\eqref{eq:triple_bracket}}&{=}  f(c \otimes l_{(1)})) [l_{(2)}kh] a = \overline{\Theta}(n)(f(c \otimes l_{(1)}) \otimes l_{(2)}k) \\
			&= (f \circ_{\{AAH_{par}\}} (k \rightharpoonup \overline{\Theta}(n)))(c \otimes l).
		\end{align*}
		This shows that \(\overline{\Theta}\) is a morphism of \(\{A_{par}, A_{par}\} \# H\)-modules.
		
		Take now \(y = b[k] \in H_{par}\). By \cite[Proposition 6]{ABenveloping}, \(n \btleft (b \# k) = h_{(1)}k_{(2)} \rightharpoonup \theta(S^{-1}(k_{(1)}) \bullet (ab)) \otimes h_{(2)}k_{(3)}\). Hence
		\begin{align*}
			\overline{\Theta}(n \btleft (b \# k))(c \otimes l) &= c(hk_{(2)} \rightharpoonup \varphi(S^{-1}(k_{(1)}) \bullet (ab)))(l) \\
			\overset{\eqref{eq:equality_Hparbar}}&{=} c(h \rightharpoonup \varphi(ab[k]))(l) = (\overline{\Theta}(n) \btleft y)(c \otimes l).
		\end{align*}
		We conclude that \(\overline{\Theta}\) is right \(H_{par}\)-linear as well. 
		
		By \cref{th:Morita}, the functor \(N \otimes_{H_{par}} - : {_{H_{par}}}\Mod \to {_{\{A_{par}, A_{par}\}\# H}}\Mod\) is an equivalence of categories. We showed that \(N \otimes_{H_{par}} - \cong \{A_{par}, H_{par}\} \otimes_{H_{par}} - \cong \{A_{par}, -\}\), so \(\{A_{par}, -\} : \HPMod \to {_{\{A_{par}, A_{par}\}}}(\HMod)\) is an equivalence as well.
	\end{proof}
	
	The previous theorem applies in particular to finite-dimensional pointed Hopf algebras over a field of characteristic zero thanks to \cref{th:Xi_iso_pointed}.

	\begin{remark}
		Let us put the above result in relation with known general results on module categories over (finite) tensor categories. For example, \cite[Theorem 7.10.1]{EGNObook} provides sufficient conditions on such a finite module category to arise as a category of modules over an internal algebra object. However, to apply this result directly to our context, we would not only need to require that \(\{A_{par}, -\}\) is exact (this is certainly the case if $H$ is a finite-dimensional pointed Hopf algebra over a field of characteristic zero by \cref{th:Xi_iso_pointed}) but also, and more importantly, that $A_{par}$ is finite-dimensional. The only known examples of Hopf algebras satisfying both conditions are finite group algebras. Therefore, our results allow to go beyond the setting of \cite{EGNObook}. 
		
		Recently an alternative approach to overcome the finiteness conditions from \cite{EGNObook} has been proposed in \cite{MattiTony}.
		Suppose that \(\{A_{par}, -\} = \Hom_{\HPMod}(A_{par} \otimes H, -)\) is exact, which is equivalent to \(A_{par} \otimes H\) being a projective \(H_{par}\)-module. By \cref{leII:projtoHpar}, \(A_{par} \otimes H\) is a generator in \(\HPMod,\) and it follows that for any projective \(H\)-module \(P\), the partial module \(A_{par} \otimes P\) is projective. Moreover, for any partial module \(M\), there is a global module \(X\) and a surjection \(A_{par} \otimes X \to M\) (it suffices to take \(X = \bigoplus_I H\) for a generating set \(I\) of \(M\)). This means that \(A_{par}\) is an \textit{\(\HMod\)-projective \(\HMod\)-generator} in the sense of \cite[Definition 4.2]{MattiTony}. Since \(A_{par} \otimes -\) has a right adjoint \(\{A_{par}, -\},\) the right actegory version of \cite[Theorem 5.1]{MattiTony} can be applied: we conclude that \(\{A_{par}, -\}\) provides an equivalence between \(\HPMod\) and the Eilenberg-Moore category of the monad \(\{A_{par}, A_{par} \otimes -\}\) on \(\HMod\).
		
		If \(H\) is moreover finite-dimensional, then we can see that \(\{A_{par}, A_{par} \otimes -\} \cong \{A_{par}, A_{par}\} \otimes -\) as monads on \(\HMod\) and we recover \cref{prop:NHpar}. Indeed, \(A_{par} \otimes H\) is a finitely generated \(H_{par}\)-module because \(A_{par}\) is cyclic and \(H\) is finite-dimensional. It follows that \(\{A_{par}, -\} = \Hom_{\HPMod}(A_{par} \otimes H, -)\) preserves direct sums, and so do \(\{A_{par}, A_{par} \otimes -\}\) and \(\{A_{par}, A_{par}\} \otimes -\). By \cref{leII:Theta}, we know that \(\{A_{par}, A_{par} \otimes H\} \cong \{A_{par}, A_{par}\}\otimes H\). Then by a classical argument, we find that also \(\{A_{par}, A_{par} \otimes X\} \cong \{A_{par}, A_{par}\}\otimes X\) for any \(H\)-module \(X\), as any such \(X\) can be represented as a coequalizer 
		\[\begin{tikzcd}
			H \otimes H \otimes X \arrow[shift left = 4pt]{r}{H \otimes\, \btright} \arrow[shift right = 4pt, swap]{r}{\mu \otimes X} & H \otimes X \arrow{r}{\btright} & X
		\end{tikzcd}\]
		of free \(H\)-modules, and both functors \(\{A_{par}, A_{par} \otimes -\}\) and \(\{A_{par}, A_{par}\} \otimes -\) are exact.
	\end{remark}
	
	\subsection{The group case}
	Let us look at the group algebra \(H = kG\) for a finite group \(G\) in more detail.
	The category of $\{ A_{par} ,A_{par} \}$-modules in the category ${}_{\Bbbk G} \Mod$ is isomorphic to the category of modules over $(\Bbbk G)_{glob}=\{ A_{par} ,A_{par} \} \# \Bbbk G$. We are going to show a more explicit construction of $(\Bbbk G)_{glob}$.
	
	For a finite group $G$, the subalgebra \(A_{par}\) of \(k_{par}G\) is generated by the commuting idempotents \(\varepsilon_g = [g][g^{-1}]\) for \(g \in G\), and it has a basis of orthogonal idempotents
	\[
	P_X = \prod_{g \in G} \varepsilon_g \prod_{\bar{g} \notin G} (1 - \varepsilon_{\bar{g}})
	\]
	for \(X \in \Pp_1(G)\), where $\Pp_1 (G)$ is the set of subsets of $G$ containing the neutral element $e\in G$. The partial action of \(kG\) on \(A_{par}\) in terms of the elements \(P_X\) is then
	\[
	g \bullet P_X = \begin{cases}
		P_{gX} &\text{ if } g^{-1} \in X, \\
		0 &\text{ if } g^{-1} \notin X.
	\end{cases}
	\]
	Moreover, in the case of a finite group $G$, the algebra $k_{par}G$ is isomorphic to a groupoid algebra (see \cite{DEP}). More specifically, $\Bbbk_{par} G \cong \Bbbk \Gamma (G)$, where $\Gamma (G)$ is the groupoid 
	\[
	\Gamma (G) =\{ (g,A) \in G \times \mathcal{P}(G) \; | \; e,g^{-1} \in A \} ,
	\]
	whose set of objects is
	\[
	\Gamma (G)^{(0)} =\{ (e,A) \in G \times \mathcal{P}(G) \; | \; e \in A \} ,
	\]
	and whose structure is given by
	\begin{eqnarray}
		& \, & s(g,A)=(e,A), \qquad t(g,A)=(e,gA), \nonumber \\
		& \, & (g,A)(h,B)= \begin{cases} (gh,B) & \text{ if } A=hB \\
			\text{not defined} &\text{ if } A\neq hB \end{cases} \label{eq:multiplication_Gamma}\\
		& \, & (g,A)^{-1} =(g^{-1} ,gA). \nonumber
	\end{eqnarray}
	
	Let us characterize the algebra $(\Bbbk G)_{glob}$ as a groupoid algebra in the same way we did for $\Bbbk_{par} G$. Denote by \(\Pp'(G)\) the set of nonempty subsets of \(G\), which is a semilattice with respect to the union, \(\cup\). In particular it is an inverse semigroup and we can consider its semigroup algebra \(\Bb(G) \coloneqq k\Pp'(G)\). By \cite[Theorem 4.2]{Steinberg}, \(\Bb(G)\) is isomorphic to the groupoid algebra of the underlying groupoid of \(\Pp'(G)\). This underlying groupoid has only identity morphisms because every element in \(\Pp'(G)\) is idempotent. Hence \(\Bb(G)\) has a basis of orthogonal idempotents \(\{Q_X \mid X \in \Pp'(G)\}\). In fact \(\Bb(G)\) is even a \(kG\)-module algebra by the action
	\[g \triangleright Q_X = Q_{gX}\]
	for all \(g \in G\) and \(\varnothing \neq X \subseteq G\). We claim that the standard dilation of \(A_{par}\) is isomorphic to \(\Bb(G)\). To this aim, let us observe first that for any \(X \in \Pp'(G)\) and \(x \in X,\)
	\[(x \rightharpoonup \varphi(P_{x^{-1}X}))(g) = gx \bullet P_{x^{-1}X} = \begin{cases}
		P_{gX} &\text{ if } g^{-1} \in X, \\
		0 &\text{ if } g^{-1} \notin X,
	\end{cases}\]
	which is clearly independent of the choice of \(x \in X\). 
	
	Moreover, the set \(\{x \rightharpoonup \varphi(P_{x^{-1}X}) \mid X \in \Pp'(G)\}\) is a basis for \(\overline{A_{par}}\): it is generating because \(\sum_i g_i \rightharpoonup \varphi(P_{Y_i}) = \sum_i g_i \rightharpoonup \varphi(P_{g_i^{-1} g_iY_i})\) for any \(g_i \in G, Y_i \in \Pp_1(G)\); we have that \(g_i \in g_iY_i\) because \(Y_i\) contains the unit of \(G\). It is free too, since if there exist \(\alpha_X \in k\) for \(X \in \mathcal{P}'(G)\) such that for all \(h \in G\)
	\begin{align*}
		0 &= \sum_{X \in \Pp'(G)} \alpha_X (x \rightharpoonup \varphi(P_{x^{-1}X}))(h) = \sum_{X \in \Pp'(G)} \alpha_X hx \bullet P_{x^{-1}X} \\
		&= \sum_{X \ni h^{-1}} \alpha_X P_{hX} = \sum_{X \in \Pp_1(G)} \alpha_{h^{-1}X} P_X,
	\end{align*}
	then \(\alpha_{h^{-1}X} = 0\) for all \(X \in \Pp_1(G)\) and \(h \in G,\) i.\,e.\ \(\alpha_X = 0\) for all \(X \in \Pp'(G)\).
	
	It follows that the assignment
	\begin{align*}
		\zeta : \Bb(G) \to \overline{A_{par}}, \quad Q_X \mapsto \left(x \rightharpoonup \varphi(P_{x^{-1}X})\right)
	\end{align*}
	is a well-defined linear isomorphism. Let us show that it is \(kG\)-linear. Take \(g, h \in G, X \in \Pp'(G)\) and let \(x \in X\). Then \(gx \in gX\), so
	\begin{align*}
		\zeta(g \triangleright Q_X)(h) = \zeta(Q_{gX})(h) = (gx \rightharpoonup\varphi(P_{(gx)^{-1}gX}))(h) = (g \rightharpoonup (x \rightharpoonup \varphi(P_{x^{-1}X})))(h).
	\end{align*} 
	It is multiplicative too, because the elements \(x \rightharpoonup \varphi(P_{x^{-1}X})\) are orthogonal idempotents in \(\overline{A_{par}}\):
	\begin{equation*}
		(x \rightharpoonup \varphi(P_{x^{-1}X}))(y \rightharpoonup \varphi(P_{y^{-1}Y})) \overset{\eqref{eq:multiplication_globalization}}{=} x \rightharpoonup \varphi(P_{x^{-1}X} (x^{-1}y \bullet P_{y^{-1}Y})) 
		\overset{\phantom{\eqref{eq:multiplication_globalization}}}{=} \begin{cases}
			x \rightharpoonup \varphi(P_{x^{-1}X}) &\text{ if } X = Y, \\
			0 & \text{ if } X \neq Y.
		\end{cases}
	\end{equation*}
	We conclude that \(\zeta\) is an isomorphism of \(kG\)-module algebras. Since by \cref{th:Xi_iso_pointed} \(\overline{A_{par} (G)} \cong \{A_{par} (G), A_{par} (G)\}\), we obtain that
	\[(kG)_{glob} = \{A_{par}(G), A_{par}(G)\} \# kG \cong \overline{A_{par}(G)} \# \Bbbk G \cong  \Bb(G) \# kG .\]
	The smash product on the right hand side is the groupoid algebra of the groupoid \(\Gamma_{glob}(G),\) which has set of objects \(\Pp'(G)\) and morphisms
	\(G \times \Pp'(G)\); the composition is defined similarly to \eqref{eq:multiplication_Gamma}:
	\begin{equation*}
		(g, A) \cdot (h, B) = \begin{cases}
			(gh, B) &\text{ if } A = hB; \\
			\text{not defined} & \text{ if } A \neq hB,
		\end{cases}
	\end{equation*}
	for all \(g, h \in G\) and \(A, B \in \Pp'(G)\). This is exactly the description of the algebra \(k_{glob}G\) introduced in \cite[Chapter 3]{Velascothesis}, hence \((kG)_{glob} \cong k\Gamma_{glob}(G) = k_{glob}G\), and \((kG)_{glob}\) is in fact a weak Hopf algebra (because it is isomorphic to a groupoid algebra).
	
	In conclusion, we obtained the following chain of isomorphisms of algebras.
	
	\begin{corollary}
		For a finite group $G$, we have the algebra isomorphisms 
		\[
		\Bbbk_{glob} G\cong \Bbbk \Gamma_{glob}(G) \cong \mathcal{B} (G) \# \Bbbk G \cong \overline{A_{par}(G)}\# \Bbbk G \cong \{ A_{par} , A_{par} \} \# \Bbbk G \cong (\Bbbk G)_{glob} .
		\]
	\end{corollary}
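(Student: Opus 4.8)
The plan is to read the displayed chain of isomorphisms from the rightmost term toward the left and to recognise that each individual link has already been supplied in the discussion immediately preceding the statement; the proof therefore reduces to assembling these links in the correct order and to checking, at each step, that the relevant map is an isomorphism of \emph{algebras} (and not merely of vector spaces or of $\Bbbk G$-modules, so that smashing with $\Bbbk G$ is legitimate). First I would record the definitional identity $(\Bbbk G)_{glob} = \{A_{par}(G), A_{par}(G)\} \# \Bbbk G$, which is just the definition of $H_{glob}$ specialised to $H = \Bbbk G$, together with $\Bbbk \Gamma_{glob}(G) = \Bbbk_{glob}G$, which holds because $\Gamma_{glob}(G)$ was defined precisely so as to reproduce the groupoid from \cite[Chapter 3]{Velascothesis}.

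Next I would supply the central link $\{A_{par}, A_{par}\} \# \Bbbk G \cong \overline{A_{par}(G)} \# \Bbbk G$. For a finite group $G$ over a field of characteristic zero, $\Bbbk G$ is a pointed Hopf algebra whose group of grouplikes is the finite group $G$, so \cref{th:Xi_iso_pointed} applies and $\Xi_{A_{par}} : \overline{A_{par}(G)} \to \{A_{par}(G), A_{par}(G)\}$ is an isomorphism of $\Bbbk G$-modules. Here one must check that $\Xi_{A_{par}}$ is moreover multiplicative: since it is $\Bbbk G$-linear, satisfies $\Xi_{A_{par}} \circ \varphi = \theta$, and both $\varphi$ and $\theta$ are multiplicative (the latter by \cref{le:multiplicative}), while the two globalizations are generated as $\Bbbk G$-module algebras by the images of $\varphi$ and $\theta$, it follows that $\Xi_{A_{par}}$ is an isomorphism of $\Bbbk G$-module algebras, and hence induces the desired isomorphism of smash products.

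I would then invoke the explicit map $\zeta : \Bb(G) \to \overline{A_{par}(G)}$, $Q_X \mapsto (x \rightharpoonup \varphi(P_{x^{-1}X}))$, constructed just above. The computations already carried out show that $\zeta$ is a linear bijection (the elements $x \rightharpoonup \varphi(P_{x^{-1}X})$ form a basis of $\overline{A_{par}}$), that it is $\Bbbk G$-linear, and that it is multiplicative because these basis elements are orthogonal idempotents; thus $\zeta$ is an isomorphism of $\Bbbk G$-module algebras, giving $\overline{A_{par}(G)} \# \Bbbk G \cong \Bb(G) \# \Bbbk G$. Finally I would identify $\Bb(G) \# \Bbbk G$ with the groupoid algebra $\Bbbk \Gamma_{glob}(G)$: since $\Bb(G)$ has basis the orthogonal idempotents $\{Q_X\}$ permuted by $g \triangleright Q_X = Q_{gX}$, the smash product has basis $\{Q_X \# g\}$, and its multiplication $(Q_X \# g)(Q_Y \# h) = Q_X (g \triangleright Q_Y) \# gh$ reproduces the partial composition law of $\Gamma_{glob}(G)$.

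The main obstacle is precisely this last link, namely matching the smash-product multiplication with the partial composition $(g,A)(h,B) = (gh,B)$ (undefined when $A \neq hB$). The point is to fix the correct dictionary between the subset indexing the idempotents and the source/target objects of the groupoid --- under a correspondence such as $Q_X \# g \leftrightarrow (g, g^{-1}X)$ --- so that the Kronecker delta $\delta_{X,\,gY}$ arising from orthogonality of the $Q_X$ is exactly the composability condition of $\Gamma_{glob}(G)$. Once this bookkeeping is fixed, the remaining verifications are the routine checks already present in the text, and concatenating the five isomorphisms yields the full chain asserted in the corollary.
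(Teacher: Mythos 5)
Your proposal is correct and takes essentially the same route as the paper, which obtains the corollary by concatenating precisely the links you assemble: the definition \((\Bbbk G)_{glob}=\{A_{par},A_{par}\}\#\Bbbk G\), the isomorphism \(\overline{A_{par}(G)}\cong\{A_{par}(G),A_{par}(G)\}\) from \cref{th:Xi_iso_pointed}, the \(\Bbbk G\)-module algebra isomorphism \(\zeta:\Bb(G)\to\overline{A_{par}(G)}\), and the identification of \(\Bb(G)\#\Bbbk G\) with \(\Bbbk\Gamma_{glob}(G)=\Bbbk_{glob}G\) from \cite{Velascothesis}. The two points you check beyond the text --- multiplicativity of \(\Xi_{A_{par}}\) (used implicitly by the paper, and indeed valid because both sides are proper globalizations in which products of the module generators \(g\rightharpoonup\varphi(a)\), resp.\ \(g\rightharpoonup\theta(a)\), are forced by \eqref{eq:multiplication_globalization} and its analogue obtained from \cref{le:Apar_convolution}) and the dictionary \(Q_X\# g\leftrightarrow(g,g^{-1}X)\) matching orthogonality of the idempotents with the composability condition in \(\Gamma_{glob}(G)\) --- are sound and only make explicit what the paper leaves to the reader.
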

	
	\section*{Acknowledgements}	
	William Hautekiet would like to thank Eliezer Batista for the hospitality during his visit to the Universidade Federal de Santa Catarina in February 2024, when this work started.
	
	Joost Vercruysse would like to thank the Fédération Wallonie-Bruxelles for support via the ARC ``From algebra to combinatorics, and back'' and the FNRS for support via the PDR T.0318.25 ``Redisclosure''.


\begin{thebibliography}{99}
		\bibitem{Abadieenveloping} F. Abadie: \textit{Enveloping actions and Takai duality for partial actions}, J.\,Funct.\,Anal.\,{\bf 197} (2003) 14-67.
		
		\bibitem{Abadiedilations} F. Abadie: \textit{Dilations of interaction groups that extend actions of Ore semigroups}, J.\,Aust.\,Math.\,Soc.\,{\bf 104} (2018) 145-161.
		
		\bibitem{ABenveloping} M. M. S. Alves, E. Batista: \textit{Enveloping actions for partial Hopf actions}, Comm.\,Algebra {\bf 38} (2010) 3872-3902.
		
		\bibitem{ABCQVparcorep} M. M. S. Alves, E. Batista, F. L. Castro, G. Quadros, J. Vercruysse: \textit{Partial corepresentations of Hopf algebras}, J.\,Algebra, {\bf 577} (2021), 74-135. 
		
		\bibitem{ABVparreps} M.\ M.\ S.\ Alves,   E.\ Batista, J.\ Vercruysse: \textit{Partial representations of Hopf algebras}, J.\,Algebra {\bf 426} (2015) 137--187.
		
		\bibitem{ABVdilations} M.\ M.\ S.\ Alves,   E.\ Batista, J.\ Vercruysse: \textit{Dilations of partial representations of Hopf algebras}, J.\,London Math.\,Soc.\,{\bf 100} (1) (2019) 273-300.
		
		\bibitem{AFHconnected} A.\ R.\ Alves Neto, T.\ L.\ Ferrazza, W.\ Hautekiet: \textit{Partial representations of connected and smash product Hopf algebras} (2024), \url{https://arxiv.org/abs/2404.17303}.		
		
		\bibitem{BM} T. Brzezi\'nski, G. Militaru: \textit{Bialgebroids, $\times_A$-bialgebras and duality}, J.\,Algebra {\bf 251} (1) (2002) 279-294.
		
		\bibitem{CJ} S. Caenepeel, K. Janssen: \textit{Partial (Co)Actions of Hopf Algebras and Partial Hopf-Galois Theory}, Comm.\,Algebra Vol. 36 (8) (2008) 2923-2946.
		
		\bibitem{CGamthematician} M. Capucci, B. Gavranovi\'{c}: \textit{Actegories for the working amthematician}, \\ \url{https://arxiv.org/abs/2203.16351}.
		
		\bibitem{Dok1} M. Dokuchaev:
		\textit{Partial actions, a survey}, Contemporary Math.\,Vol.\,{\bf 537} (2011) 173-184.
		
		\bibitem{Dok2} M. Dokuchaev: \textit{Recent developments around partial actions}, São Paulo J.\,Math.\,Sci.\,13 (2019) 195-247.
		
		\bibitem{DE} M. Dokuchaev, R. Exel: \textit{Associativity of Crossed Products by Partial Actions, Enveloping Actions and Partial Representations}, Trans.\,Amer.\,Math.\,Soc.\,{\bf 357} (5) (2005) 1931-1952.
		
		\bibitem{DEP} M. Dokuchaev, R. Exel, P. Piccione: \textit{Partial Representations and Partial Group Algebras}, J.\,Algebra {\bf 226} (2000) 505-532.
		
		\bibitem{EGNObook} P. Etingof, S. Gelaki, D. Nikshych, V. Ostrik: \textit{Tensor Categories}, Math.\,Surveys Monogr.\,Vol.\,205, American Mathematical Society (2015).
		
		\bibitem{ExelCircle} R. Exel: \textit{Circle actions on C* -algebras, partial automorphisms and generalized Pimsner-Voiculescu exact sequences}, J.\,Funct.\,Anal.\,{\bf 122} (3) , (1994) 361-401. 
		
		\bibitem{JKactegories} G. Janelidze, G.M. Kelly: \textit{A note on actions of a monoidal category}, Theory and Applications of Categories 
		(electronic only) 9 (2001): 61-91.
		
		\bibitem{KL} J. Kellendonk, M. Lawson: \textit{Partial actions of groups}, Int.\,J.\,Algebra and Comp.\,Vol.\,14 (1) (2004) 87-114.
		
		\bibitem{MattiTony} M. Stroi\'nski, T. Zorman: \textit{Reconstruction of module categories in the infinite and non-rigid setting}, \\ \url{https://arxiv.org/abs/2409.00793}.
		
		\bibitem{QuiggRaeburn} J. C. Quigg, I. Raeburn: \textit{Characterizations of Crossed Products by Partial Actions}, J.\,Operator Theory {\bf 37} (1997) 311-340. 
		
		\bibitem{radford} D. E. Radford: \textit{Hopf Algebras}, Series on Knots and Everything Vol.\,\textbf{49} (2012), World Scientific Publishing Co.\,Pte.\,Ltd., Hackensack, NJ.
		
		\bibitem{Steinberg} B. Steinberg: \textit{A groupoid approach to discrete inverse semigroup algebras}, Adv.\,Math.\,Vol.\,223 (2010) 689-727.
		
		\bibitem{Velascothesis} W. Velasco: \textit{Algebras of expanded structures}, PhD Thesis, Federal University of Paran\'a, Brazil. Available at \url{https://acervodigital.ufpr.br/handle/1884/72719}.
		
	\end{thebibliography}
\end{document}